\documentclass{amsart}    
\allowdisplaybreaks
\usepackage[
backend=biber,
style=numeric,
sorting=nyt
]{biblatex}
\usepackage{microtype}
\usepackage[colorlinks, citecolor=blue]{hyperref}

\title{Spectral embedding through weak* limit of finite-dimensional approximations}

\author{Fabrice Nonez}
\address{Department of Mathematics and Statistics, Concordia University, Montreal, Canada}
\email{fabrice.nonez@mail.concordia.ca}

\keywords{Spectral theorem, Analysis, Functional analysis, Operator theory}

\newtheorem{thm}{Theorem}[section]    
\newtheorem{theorem}{Theorem}[section] 
           
\newtheorem{lemma}[thm]{Lemma}        
\newtheorem{prop}[thm]{Proposition}    
\newtheorem{cor}[thm]{Corollary}     
     
\theoremstyle{definition}
\newtheorem{defn}[thm]{Definition}    
\newtheorem{definition}[thm]{Definition}

\theoremstyle{remark}
\newtheorem{remark}[thm]{Remark}

\def \Proj{\operatorname{proj}}
\def \Span{\operatorname{span}}
\def \Dom{\operatorname{dom}}
\def \St{\operatorname{st}}
\def \Id{\operatorname{id}}
\def \Borel{\operatorname{Borel}}
\def \HOM{\hat{\Omega}}
\def \HMU{\hat{\mu}}
\def \HNU{\hat{\nu}}
\def \HU{\hat{U}}
\def \HT{\hat{T}}
\def \Hm{\hat{m}}
\def \LL{\langle}	
\def \RR{\rangle}
\newcounter{pointnumber}

\begin{document}
\begin{abstract}    
The scope of this text is to study a process that induces another proof of the Spectral Embedding Theorem: that any densely defined symmetric operator can be extended by a multiplication operator through an embedding of the Hilbert space into an $L_2$ space. Furthermore, that process is meant to be used for specific operators, where natural spectral embeddings or equivalences may be found.

That process has previously been considered in \cite{nonez2024spectralequivalencesnonstandardsamplings} and in \cite{goldbring2025nonstandardapproachdirectintegral}, where it has been introduced through nonstandard techniques. Our contribution aims to be the reformulation of the theory through classical analysis arguments, without the use of nonstandard techniques nor ultraproducts.
\end{abstract}
\maketitle
\section{Overview and motivations}

First, we state the central theorem of this paper, which we also call the Spectral embedding Theorem.
\begin{thm}[Spectral Theorem for symmetric operators]\label{theorem_spectral_embedding}
	Suppose that $H$ is a separable real or complex Hilbert space, and that $A$ is a densely defined symmetric operator on $H$. Then, there exists a compact metric space $\HOM,$ a probability measure $\HMU$ on $\Borel(\HOM)$, an isometry $\HU:H\rightarrow L_2(\HOM,\HMU)$ and a square-integrable function $\Hm:\HOM\rightarrow\mathbb{R}$ generating multiplication operator $\HT$ on $L_2(\HOM,\HMU)$ such that $\HU\circ A \subset \HT\circ \HU.$
\end{thm}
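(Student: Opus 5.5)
The plan is to realize $H$ inside $L_2$ of a weak* limit of finite-dimensional spectral measures. Each measure comes from diagonalizing a compression of $A$ to a finite-dimensional subspace. Everything is set up so that the relevant identities are integrals of continuous functions on one fixed compact space, plus one unbounded coordinate controlled by uniform second-moment bounds.

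\emph{Setup.} The graph space $\{(x,Ax):x\in\Dom A\}$ is separable, so we fix a countable subset of $\Dom A$ that is dense in the graph norm. Since $\Dom A$ is dense in $H$, this set has dense span in $H$. We Gram--Schmidt it to an orthonormal basis $(e_k)$ of $H$ with every $e_k\in\Dom A$; every $x\in\Dom A$ is then a graph-norm limit of vectors in $\Span\{e_k\}$.

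Let $P_n$ be the projection onto $H_n=\Span\{e_1,\dots,e_n\}$ and $A_n=P_nAP_n|_{H_n}$, which is symmetric on a finite-dimensional space. Diagonalize $A_n$ with an orthonormal eigenbasis $v_1,\dots,v_n$ and real eigenvalues $\lambda_1,\dots,\lambda_n$. Put
\[
c_k=\frac{2^{-k}}{1+\|Ae_k\|^2},\qquad w_j=\sum_{k\le n}c_k|\langle e_k,v_j\rangle|^2 .
\]
Let $\HOM=[-\infty,\infty]\times\prod_k\overline{D}(0,c_k^{-1/2})$, a compact metrizable space, with coordinates $\lambda$ and $\pi_k$. (In the real case, use closed intervals instead of disks.) Define $\mu_n=\sum_j w_j\delta_{\omega_j}$, where $\omega_j=(\lambda_j,(z_{jk})_k)$ with $z_{jk}=\langle e_k,v_j\rangle/\sqrt{w_j}$ for $k\le n$ and $0$ otherwise. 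Since $c_k|z_{jk}|^2\le 1$, the points $\omega_j$ lie in $\HOM$.

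The following identities are direct computations for $k,l\le n$:
\[
\int\pi_k\bar\pi_l\,d\mu_n=\delta_{kl},\qquad \int\lambda\,\pi_k\bar\pi_l\,d\mu_n=\langle Ae_k,e_l\rangle,
\]
\[
\int\lambda^2|\pi_k|^2\,d\mu_n=\|A_ne_k\|^2\le\|Ae_k\|^2,\qquad \mu_n(\HOM)=\sum_{k\le n}c_k .
\]
They also give the total second moment
\[
\int\lambda^2\,d\mu_n=\sum_k c_k\|A_ne_k\|^2\le\sum_k 2^{-k}<\infty,
\]
uniformly in $n$. By Banach--Alaoglu and the Riesz representation theorem on the compact metric space $\HOM$, we pass to a subsequence with $\mu_n\to\mu$ weak*. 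We then take $\HMU=\mu/\mu(\HOM)$, where $\mu(\HOM)=\sum_k c_k$; this constant rescaling of $\HU$ is harmless.

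\emph{Passing to the limit.} Products $\pi_k\bar\pi_l$ are bounded and continuous, so $\int\pi_k\bar\pi_l\,d\mu=\delta_{kl}$. Hence $e_k\mapsto\pi_k$ extends to an isometry $\HU$ (after normalization).

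The function $\lambda^2$ is lower semicontinuous on $[-\infty,\infty]$, so $\int\lambda^2\,d\mu\le 1$. In particular $\mu(\{\pm\infty\})=0$ and $\Hm=\lambda$ is square-integrable. Likewise $\int\lambda^2|\pi_k|^2\,d\mu\le\|Ae_k\|^2$.

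The uniform bounds on $\int\lambda^2|\pi_k|^2\,d\mu_n$ give uniform integrability of $\lambda\pi_k\bar\pi_l$. So truncating $\lambda$ at level $R$ and letting $R\to\infty$ yields $\int\lambda\,\pi_k\bar\pi_l\,d\mu=\langle Ae_k,e_l\rangle$.

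\emph{Key step: $\lambda\pi_k=\HU(Ae_k)$ in $L_2(\mu)$.} Expand
\[
\Bigl\|\lambda\pi_k-\sum_{l\le N}\langle Ae_k,e_l\rangle\pi_l\Bigr\|^2_{L_2(\mu)} .
\]
The first term is at most $\|Ae_k\|^2$. The cross terms equal $-2\sum_{l\le N}|\langle Ae_k,e_l\rangle|^2$, by the previous identity. The last term is $\sum_{l\le N}|\langle Ae_k,e_l\rangle|^2$, by orthonormality. Letting $N\to\infty$, the bound tends to $\|Ae_k\|^2-\|Ae_k\|^2=0$. Hence $\HT\HU e_k=\HU Ae_k$, and by linearity this holds on $\Span\{e_k\}$.

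\emph{Extending to all of $\Dom A$.} For $x\in\Dom A$, pick $x_j\in\Span\{e_k\}$ with $x_j\to x$ and $Ax_j\to Ax$. Then $\HU x_j\to\HU x$ and $\Hm\,\HU x_j=\HU Ax_j\to\HU Ax$ in $L_2$. Passing to an a.e.-convergent subsequence gives $\Hm\,\HU x=\HU Ax$. So $\HU x\in\Dom\HT$ and $\HU\circ A\subset\HT\circ\HU$.

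\emph{Main obstacle.} I expect the hardest part to be the passage to the limit for integrands involving the unbounded coordinate $\lambda$. This is the uniform-integrability and lower-semicontinuity argument that shows $\int\lambda\pi_k\bar\pi_l\,d\mu$ is preserved and that no mass escapes to $\lambda=\pm\infty$. The weights $c_k$ were chosen precisely to make the second-moment bounds uniform.
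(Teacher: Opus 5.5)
Your proof is correct, but at the decisive step it takes a genuinely different route from the paper.

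The finite-dimensional input is essentially the same as the paper's: compressions $P_nA|_{H_n}$ along a graph-dense sequence, weights $\sum_k c_k|\langle e_k,v_j\rangle|^2$, and a weak* limit of weighted point masses on a compact product of disks. The difference is that you adjoin the eigenvalue as an extra coordinate in $[-\infty,\infty]$. So $\Hm$ is just the coordinate $\lambda$, redefined as, say, $0$ on the null set $\{\lambda=\pm\infty\}$ so that it is real-valued.

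The paper embeds only the coefficients $\sqrt{c_j^{(n)}}\,U_n(e_j^{(n)})$ into the Hilbert cube $Q$, so it has to construct $\Hm$ separately. It bounds $\Phi_n(\lambda_n)$ in $C(Q)'$, extracts a further weak* subsequence, and obtains $\Hm$ as an ``$S$-limit'' via the Riesz representation theorem in $L_2(Q,\HMU)$. It then verifies $\Hm\cdot\HU(x)=\HU(Ax)$ for each $x\in\Dom(A)$ directly, using approximating pairs $(x_n,A_nx_n)\to(x,Ax)$ and the $\epsilon/4$ estimate of Proposition~\ref{proposition_converging_isometry}.

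Your replacement works as follows. The exact finite-$n$ identities $\int\lambda\pi_k\bar\pi_l\,d\mu_n=\langle Ae_k,e_l\rangle$ pass to the limit by uniform integrability. Lower semicontinuity gives $\|\lambda\pi_k\|_{L_2(\mu)}\le\|Ae_k\|$, and a Parseval squeeze then forces $\lambda\pi_k=\HU(Ae_k)$. Finally, closedness of multiplication operators extends this from the core $\Span\{e_k\}$.

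What your route buys:
\begin{itemize}
\item It is shorter and dispenses with the $S$-convergence machinery and the second subsequence.
\item It makes explicit the question of spectral mass escaping to infinity.
\item Positivity of the weights is automatic, because $e_1,\dots,e_n$ is an orthonormal basis of $H_n$. You need nothing like the auxiliary vector $\sum_{f\in\Omega_n}f$ of Proposition~\ref{proposition_scale_existence}.
\end{itemize}

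What it costs: it relies on exact identities that are available only for compressions along a fixed orthonormal basis of $H$ contained in $\Dom(A)$. The paper's framework accepts arbitrary sampling-scale sequences whose scale vectors $e_j^{(n)}$ merely converge to $e_j$. That generality is exactly what its explicit examples need. In Section~\ref{section_differential}, $H_n$ consists of step functions, so $H_n\not\subset\Dom(A)$ and the $A_n$ are periodic finite differences rather than compressions. Moreover, the limit vectors $e_j$ are translated Gaussians, which are not in $\Dom(A)=C_c^\infty(\mathbb{R})$. So neither your exact identities nor your core-extension step would be available there.
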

\begin{remark}
	By the Appendix of \cite{nonez2024spectralequivalencesnonstandardsamplings} (which only uses elementary techniques), if $A$ is self-adjoint, it is a corollary that:
	\begin{itemize}
		\item $\HU\circ A = \HT\circ \HU.$ 
		\item $\HU(H)$ reduces for $\HT$, meaning that $\Proj_{\HU(H)} \HT=\HT\Proj_{\HU(H)}$.
		\item for $V\in\operatorname{Borel}(\mathbb{R}),$ if $\hat{P}(V)$ is the multiplication operator on $L_2(\HOM,\HMU)$ induced by $\mathbf{1}_{\Hm^{-1}(V)}$ and if $P(V)=\HU^* \hat{P}(V) \HU,$ then $P:\operatorname{Borel}(\mathbb{R})\rightarrow\{\text{ projections on $H$ } \}$ is the spectral measure of $A$. This establishes the spectral measure version of the Spectral Theorem.
	\end{itemize}
\end{remark}

While the Spectral Theorem is not usually stated for merely symmetric operators instead of self-adjoint, it is already known that any symmetric operators has a self-adjoint extension if we allow the space to be larger, and as such this version is equivalent to the usual formulation for self-adjoint operators.

The aim of this paper lies the description of a "process". This process has a set of parameters that is the "$A$-converging sampling-scale-sequence" (which we define below). Then, it induces resulting objects $\HOM$, $\HMU$, $\HU$ and $\Hm$ that satisfy the Spectral embedding Theorem.

 This process was first introduced in \cite{nonez2024spectralequivalencesnonstandardsamplings}, and then it was adapted in \cite{goldbring2025nonstandardapproachdirectintegral} to show the direct integral version of the Spectral Theorem. Both of these papers describe this process through nonstandard analysis, and fundamentally rely on concepts such as internal sets and Loeb measures to establish its validity. The goal here is to create a description of it that achieves the same aim as in \cite{nonez2024spectralequivalencesnonstandardsamplings}, sharing the same underlying idea, while only using tools and ideas of classical functional analysis to do it. 
 
 Counterintuitively, we think that this showcases one of the strengths of nonstandard analysis: even if one aims for a classical method, it is often a good strategy to try nonstandard methods first, then try to extract classical methods from them.

There are two main motivations for this process. The first one is to provide an interesting proof of the spectral theorem, which is done by showing there are general parameters that will work for any symmetric densely-defined $A$. Since no complex analysis, resolvent theory or Cayley transform are involved, the proof is the same whether the Hilbert space is real or complex. Instead, the proof relies on the finite-dimensional version of the spectral theorem.

Aside from both papers mentioned above, this paper shares ideas with \cite{HIRVONEN_HYTTINEN_2024}, whereas finite dimensional approximations of bounded self-adjoint operators with cyclic vectors are used to construct a suitable measure from their ultraproduct, noting that any self-adjoint bounded operator can split into subspaces with cyclic vectors.

Another proof that uses nonstandard/ultraproduct finite dimensional approximations to prove the Spectral Theorem is found in \cite{GOLDBRING2021590}, which itself inspired the work done in \cite{nonez2024spectralequivalencesnonstandardsamplings}, while another, short proof, can be found in \cite{matsunaga2024shortnonstandardproofspectral}. One difference with all of these papers is that we aim to not use any nonstandard analysis nor ultraproduct machinery (both are conceptually equivalent).

As far as we know, there has not been many recent new proofs of the Spectral Theorem outside of ultraproducts/nonstandard analysis. One we could find is \cite{leinfelder2017secondlookageometric}, which aimed to revisit and simplify their work in \cite{leinfelder1979geometric}. One reason we think the proof presented in our paper is an interesting addition to the literature is that it directly results in the suitable multiplication operator, even in cases where the resulting isometry is not surjective, which is certain if $A$ has no self-adjoint extensions. This contrasts with the classical method of splitting the Hilbert space into subspaces with cyclic vectors.

The second motivation is to help study specific operators. This is done by tweaking the parameters so that not only they respect the criteria, but they are "well-suited" for the operator. When done right, the resulting objects can be explicit in their description, and allow precise study and calculations concerning the operator. To observe this phenomenon we consider the shift operator and the differential operator. Object of future research will be to consider other classes of operators. 

We note that the work done \cite{HIRVONEN_HYTTINEN_2024} and \cite{hyttinen2024approximationsfeynmanpathintegrals} also aims to apply ultraproducts methods to specific operators, notably ones that come from quantum mechanics. We think that the methods we present here (and in \cite{nonez2024spectralequivalencesnonstandardsamplings}) can help in this endeavor. 
\section{Key ideas and conventions}\label{section_idea}

We present here a (hopefully) intuitive summary of the process presented in this text.

First, we start with the separable Hilbert space $H$, and symmetric operator $A$ on $H$. We then consider a sequence, each of the form $(H_n, A_n, \Omega_n)$, where $H_n$ is some finite-dimensional subspace of $H$, $A_n$ is some symmetric application on $H_n$ with fixed orthonormal eigenbasis given by $\Omega_n.$ We want the the sequence to approximate $A$ in some sense. 

That sense is \textbf{not} that $A_n$ should converge to $A$  in any of the usual operator topologies, which is hard to achieve when $A$ is unbounded. Instead, we want $A$ to be "approximable" by the $A_n$: whenever $x\in\Dom(A),$ there exists some sequence $(x_n\in H_n)_{n\in\mathbb{N}}$ for which the sequence of pairs $(x_n, A_n x_n)$ converge to $(x,Ax)$.

The idea is that just as the measure of Theorem \ref{theorem_spectral_embedding} would be on the set of eigenvectors if $A$ was decomposable, we want to construct a sequence of measures $\mu_n$ on each $\Omega_n.$ We have to do this judiciously, which is why we introduce "scales" which we define in Section \ref{section_sampling}. The problem then becomes that of defining $\HOM$, which would be some "limit" of $\Omega_n$. 

In \cite{nonez2024spectralequivalencesnonstandardsamplings}, it was done intrinsically using a nonstandard hull. The natural idea in classical analysis would then be to use Gromov-Hausdorff limit, using convergent subsequence if necessary, but some technical issues made the method less appealing in some cases. Instead, the scale we define induces natural maps $X_n:\Omega_n\rightarrow Q$, where $Q$ is the Hilbert cube. Then, $\HOM$ can be naturally defined as the set of limit points of $X_n(\Omega_n).$ 

However, we find that considering $Q$ itself as our actual metric space of Theorem \ref{theorem_spectral_embedding} makes the proof even simpler, as the measure $\HMU$ on $\Borel(Q)$ naturally appears as a weak* limit point of the pushforward measures $X_n^{\#}(\mu_n)$. We can then show that $\HMU$ is supported on $\HOM$. We note that defining $\HMU$ this way relies on Banach-Alaoglu and Riesz-Markov Theorems.

From there, we find that the scale induces a natural isometry $\HU: H\rightarrow L_2(\HOM,\HMU).$ The rest is then the proof that the required square-integrable $\Hm:\HOM\rightarrow\mathbb{R}$ exists by using Riesz representation theorem for Hilbert space if necessary. One notes that with the further examples, where explicit solutions are desired, the use of subsequences and representation theorems will not be required.

\subsection{Conventions}

From now on, we assume that $H$ is a separable $\mathbb{K}$-Hilbert space, where $\mathbb{K}\in\{\mathbb{R}, \mathbb{C} \}.$ Furthermore, we assume that $A$ is a densely-defined symmetric operator, with no other properties unless stated.

We will note the left-linear inner product of $x$ and $y$ with $\LL x,y\RR$, while we note pairs of these elements with $(x,y). $ When $\mu$ is a measure on some set $\Omega$, and $h:\Omega\rightarrow\mathbb{K}$ is a square-integrable function, we may abuse the notation and also note $h$ as its $L_2(\Omega,\mu)$-class.

Considering $\overline{\mathbb{D}}=\{z\in\mathbb{K}\;|\; |z|\leq 1\},$ the closed unit disk of $\mathbb{K}$ , we note the space $Q=\overline{\mathbb{D}}^{\mathbb{N}}$ equipped with its usual product topology. Since $Q$ is known to be homeomorphic to $[0,1]^{\mathbb{N}}$ no matter the value of $\mathbb{K},$ we will refer to $Q$ as the Hilbert cube, a known compact metrizable space. We finally note the natural projections $\pi_j:Q\rightarrow\overline{\mathbb{D}}$ for $j\in\mathbb{N}.$
\section{Sampling-scale sequences}\label{section_sampling}

We now define the main objects concerning our process.
\begin{defn}\label{definition_sampling}
	A sequence of triplets $(H_n,A_n,\Omega_n)_{n\in\mathbb{N}}$ is called a \textbf{sampling-sequence} for $A$ if for any $n\in\mathbb{N}$,
	\begin{enumerate}
		\item\label{property_sampling_finite_dim} $H_n< H$ and $\operatorname{dim}(H_n)\in\mathbb{N}$;
		\item\label{property_sampling_symmetric_op} $A_n: H_n\rightarrow H_n$ is a symmetric linear operator;
		\item\label{property_sampling_eigenbasis} $\Omega_n$ is an orthonormal eigenbasis of $A_n$,
		\setcounter{pointnumber}{\value{enumi}}
	\end{enumerate}
	and if
	\begin{enumerate}
		\setcounter{enumi}{\value{pointnumber}}
		\item\label{property_sampling_approx} for any $x\in\Dom(A)$, there exists a sequence $(x_n)_{n\in\mathbb{N}}$ such that each $x_n\in H_n$, $\lim_{n\rightarrow\infty}x_n=x$ and $\lim_{n\rightarrow\infty} A_n x_n=Ax.$
		\setcounter{pointnumber}{\value{enumi}}
	\end{enumerate}
\end{defn}
\begin{prop}\label{proposition_sampling_existence}
	There exists a sampling sequence for $A$.
\end{prop}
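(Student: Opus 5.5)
Since $H$ is separable and $\Dom(A)$ is dense, $\Dom(A)$ is itself separable, so we fix a sequence $(d_k)_{k\in\mathbb{N}}$ in $\Dom(A)$ whose span $D=\Span\{d_k\}$ is dense in $\Dom(A)$ for the graph norm $\|x\|_A=(\|x\|^2+\|Ax\|^2)^{1/2}$. The graph of $A$ is a subspace of the separable space $H\times H$, hence separable, which gives such a sequence. We then define
\[
H_n=\Span\{d_1,\dots,d_n,Ad_1,\dots,Ad_n\},\qquad A_n=P_nA|_{H_n}\ \text{restricted suitably},
\]
where $P_n$ is the orthogonal projection onto $H_n$. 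More precisely, $A$ need not be defined on all of $H_n$, because the vectors $Ad_k$ need not lie in $\Dom(A)$. So we replace this by an operator $A_n$ defined on all of $H_n$ by compression. First set $E_n=\Span\{d_1,\dots,d_n\}$ and $F_n=E_n+AE_n$, which is finite-dimensional. We want a symmetric map on $F_n$ that agrees with $A$ on $E_n$ up to the projection.

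We define the sesquilinear form $b(x,y)=\LL Ax,y\RR$ for $x,y\in E_n$. It is Hermitian because $A$ is symmetric. Write $F_n=E_n\oplus G_n$ with $G_n=F_n\ominus E_n$. We define $A_n$ on $F_n$ by the block matrix
\[
A_n=\begin{pmatrix} P_{E_n}A|_{E_n} & C^* \\ C & 0\end{pmatrix},\qquad C=P_{G_n}A|_{E_n}.
\]
Both diagonal blocks are symmetric, since $P_{E_n}A|_{E_n}$ is the operator of the Hermitian form $b$. The off-diagonal blocks are adjoint to each other, so $A_n$ is symmetric on $H_n:=F_n$. For $x\in E_n$ we get $A_nx=P_{E_n}Ax+P_{G_n}Ax=Ax$, since $Ax\in F_n$. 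Hence $A_n$ extends $A|_{E_n}$. Properties (1) and (2) of Definition~\ref{definition_sampling} then hold. For property (3), the finite-dimensional spectral theorem gives an orthonormal eigenbasis $\Omega_n$ of $A_n$; this works over $\mathbb{R}$ and over $\mathbb{C}$ alike.

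For property (4), fix $x\in\Dom(A)$. By graph-norm density of $\bigcup_nE_n=D$ (the $E_n$ are increasing), the distance in $\|\cdot\|_A$ from $x$ to $E_n$ tends to $0$. Choose $x_n\in E_n\subset H_n$ as the best approximation of $x$ in $E_n$ for the graph norm. Then $x_n\to x$ and $A_nx_n=Ax_n\to Ax$.

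The only real obstacle is the domain issue just addressed: the naive choice $P_nA|_{H_n}$ is not defined on all of $H_n$. The block construction, which is a symmetric dilation of the compression, resolves it. If one prefers to avoid the block formula, an alternative is to take $H_n=E_n$ and $A_n=P_{E_n}A|_{E_n}$. Convergence $A_nx_n\to Ax$ then needs $\|(I-P_{E_n})Ax_n\|\to0$. This can be arranged by a diagonal argument, enlarging $E_n$ to contain enough of the vectors $Ad_k$ up to small errors, but the dilation above is cleaner.
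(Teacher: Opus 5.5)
Your proof is correct, but it takes a different route from the paper.

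Your dilation checks out. The block operator is symmetric on $F_n=E_n\oplus G_n$. It agrees with $A$ on $E_n$ because $P_{E_n}Ax+P_{G_n}Ax=P_{F_n}Ax=Ax$. Property (4) then follows from graph-norm density alone, with $A_nx_n=Ax_n$ exactly.

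The paper uses precisely the ``alternative'' you set aside. It takes $H_n=\Span\{g_1,\dots,g_n\}\subset\Dom(A)$, where $(g_k,Ag_k)_k$ is dense in $G(A)$, and sets $A_n=\Proj_{H_n}A|_{H_n}$. Your domain obstacle only arises because you put the vectors $Ad_k$ into $H_n$. The paper never does, so the compression is defined on all of $H_n$.

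No diagonal argument or enlargement of $E_n$ is needed either:
\begin{itemize}
\item Since $\|x-g_k\|\le\|(x,Ax)-(g_k,Ag_k)\|$, a graph-dense sequence is automatically norm-dense in $\Dom(A)$, hence in $H$.
\item So for fixed $k$ there is $l$ with $\|Ag_k-g_l\|$ small, and for $n\ge l$ one gets $\|Ag_k-\Proj_{H_n}Ag_k\|\le\|Ag_k-g_l\|$.
\item Choosing $k$ with $(g_k,Ag_k)$ within $\epsilon/2$ of $(x,Ax)$ then shows that the distance from $(x,Ax)$ to $G(A_n)$ tends to $0$.
\item The paper takes $x_n$ defined by $(x_n,A_nx_n)=\Proj_{G(A_n)}(x,Ax)$.
\end{itemize}

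The trade-off is as follows. Your dilation makes $A_n$ an exact extension of $A|_{E_n}$, so (4) is immediate. The cost is $H_n\not\subset\Dom(A)$ and an arbitrary choice in the $G_n$ corner (the zero block), so $\Omega_n$ diagonalizes a somewhat artificial operator. The paper keeps the canonical Rayleigh--Ritz compression on a subspace of $\Dom(A)$ and pays only a short $\epsilon/2$ estimate.

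Nothing later in the paper requires $H_n\subset\Dom(A)$; the example of Section~\ref{section_differential} already violates it. Both constructions therefore serve equally well for the rest of the argument.
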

\begin{proof}
	Since the graph $G(A)\subset H\times H$ is a separable space, let $(g_k)_{k\in\mathbb{N}}$ be a sequence in $\Dom(A)$ such that the sequence $(g_k, Ag_k)_{k\in\mathbb{N}}$ is dense in $G(A)$. Then, for $n\in\mathbb{N},$ let $H_n=\Span(\{g_k\}_{k=1}^n),$ a finite dimensional subspace of $\Dom(A)\subset H$, and let $A_n=(\Proj_{H_n}\circ A)|_{H_n}:H_n\rightarrow H_n.$ It is clear that $A_n$ is a linear operator on $H_n$, and symmetry holds since $$\LL A_nx, y \RR=\LL \Proj_{H_n}Ax,y\RR =\LL Ax,y\RR=\LL x, Ay\RR=\LL x, \Proj_{H_n} Ay\RR=\LL x, A_ny\RR$$ holds for any $x,y\in H_n.$ Thus, by the finite dimensional version of the Spectral Theorem, let $\Omega_n$ be some orthonormal eigenbasis of $A_n$. We show that $(H_n,A_n,\Omega_n)_{n\in\mathbb{N}}$ forms a sampling sequence for $A$. We already know that Properties (\ref{property_sampling_finite_dim}), (\ref{property_sampling_symmetric_op}) and (\ref{property_sampling_eigenbasis}) hold, so we only need to show (\ref{property_sampling_approx}).
	
	We note that $\{g_k\}_{k\in\mathbb{N}}$ is dense in $H$. Indeed, we have that for $x\in\Dom(A)$ and $k\in\mathbb{N}$, $\|x-g_k\|\leq\|(x,Ax)-(g_k,Ag_k)\|,$ and as such $\{g_k\}_{k\in\mathbb{N}}$ is dense in $\Dom(A)$, which itself is dense in $H$.
	
	  Let $x\in\Dom(A).$ Then, for $n\in\mathbb{N},$ let $x_n\in H_n$ be the element such that $(x_n, A_n x_n)=\Proj_{G(A_n)}(x,Ax).$ We show that $\lim_{n\rightarrow\infty} (x_n, A_nx_n)=(x,Ax).$ Let $\epsilon>0$, and let $k\in\mathbb{N}$ such that $\|(g_k,A g_k)-(x,Ax)\|<\frac{\epsilon}{2}.$ Then, let $l\in\mathbb{N}$ such that $\|A g_k-g_l\|<\frac{\epsilon}{2}.$ For any $n\geq l$, we have, since $g_l\in H_n,$  $$\|Ag_k-\Proj_{H_n}Ag_k\|\leq\|Ag_k-g_l\|<\frac{\epsilon}{2}.$$ 
	  
	  Therefore, if $N=\max\{l,k\}$, we have that for any $n>N$, 
	  \begin{align*}
	  	\|(x,Ax)-(x_n,A_nx_n)\|&\leq \|(x,Ax)-(g_k,A_ng_k)\|\\
	  	&\leq \|(x,Ax)-(g_k,Ag_k)\|+\|(g_k,Ag_k)-(g_k,A_ng_k)\|\\
	  	&=\|(x,Ax)-(g_k,Ag_k)\|+\|Ag_k-\Proj_{H_n}Ag_k\|<\epsilon.
	  \end{align*} 
	  
	  Thus, $\lim_{n\rightarrow\infty}(x_n,A_nx_n)=(x,Ax).$ Since $x\in\Dom(A)$ is arbitrary, we conclude that Property (\ref{property_sampling_approx}) holds, and $(H_n, A_n,\Omega_n)_{n\in\mathbb{N}}$ forms a sampling sequence for $A$.
\end{proof}
Next, we need another relatively peculiar object, the low-dimensional-biased scale.
\begin{defn}\label{definition_scale}
	The sequence of finite sequences of pairs $((e_j^{(n)},c_j^{(n)})_{j=1}^{N_n})_{n\in\mathbb{N}}$ is called a \textbf{low-dimensional-biased scale} if, for any $n\in\mathbb{N}$,
	\begin{enumerate}
		\setcounter{enumi}{\value{pointnumber}}
		\item\label{property_scale_elements} $N_n\in\mathbb{N}$, with $e_j^{(n)}\in H\setminus\{0\}$ and $c_j^{(n)}\in\mathbb{R}_{>0}$ for any $j\leq N_n$;
		\item\label{property_scale_proba} $\sum_{j=1}^{N_n}c_j^{(n)}\|e_j^{(n)}\|^2=1$,
		\setcounter{pointnumber}{\value{enumi}}
	\end{enumerate}
	and if
	\begin{enumerate}
		\setcounter{enumi}{\value{pointnumber}}
		\item\label{property_scale_infinity} $\lim_{n\rightarrow\infty}N_n=\infty$;
		\item\label{property_scale_limits}  for any $j\in\mathbb{N}$ we have $e_j:=\lim_{n\rightarrow\infty} e_j^{(n)}$ exists in $H\setminus\{0\}$ and $c_j:=\lim_{n\rightarrow\infty}c_j^{(n)}$ exists in $\mathbb{R}_{>0}$;
		\item\label{property_scale_dense} $\{e_j\}_{j\in\mathbb{N}}$ is dense-spanning in $H$;
		\item\label{property_scale_bias} $\sum_{j\in\mathbb{N}}c_j\|e_j\|^2=1.$
		\setcounter{pointnumber}{\value{enumi}}
	\end{enumerate}
\end{defn}

The idea of such scales is that they will allow to construct suitable measures on each $\Omega_n$, as well as metric structures on the $\Omega_n$ that allow interactions between each other. Thus, we will need a few more compatibility conditions.
\begin{defn}\label{definition_sampling_scale_sequence}
	The sequence $S=(S_n)_{n\in\mathbb{N}}=(H_n,A_n,\Omega_n,(e_j^{(n)},c_j^{(n)})_{j=1}^{N_n})_{n\in\mathbb{N}}$ is called an \textbf{$A$-sampling-scale} sequence if $(H_n,A_n,\Omega_n)_{n\in\mathbb{N}}$ forms a sampling sequence for $A$ while $((e_j^{(n)},c_j^{(n)})_{j=1}^{N_n})_{n\in\mathbb{N}}$ forms a low-dimensional-biased scale, and if
	\begin{enumerate}
		\setcounter{enumi}{\value{pointnumber}}
		\item\label{property_compat_H} for any $n\in\mathbb{N}$ and $j\leq N_n,$ $e_j^{(n)}\in H_n$;
		\item\label{property_compat_A} there exists $C\in\mathbb{R}$ such that for any $n\in\mathbb{N}$, $\sum_{j=1}^{N_n}c_j^{(n)}\|A_ne_j^{(n)}\|^2<C$;
		\item\label{property_compat_Omega} for any $n\in\mathbb{N}$ and $f\in\Omega_n,$ there exists $j\leq N_n$ such that $\langle e_j^{(n)},f \rangle \neq 0.$
		\setcounter{pointnumber}{\value{enumi}}
	\end{enumerate}
	
	Finally, the $A$-sampling-scale sequence is called \textbf{strong} if 
	\begin{enumerate}
		\setcounter{enumi}{\value{pointnumber}}
		\item\label{property_compat_strong} For any $j\in\mathbb{N},$ the sequence $(A_ne_j^{(n)})_{n\in\mathbb{N}}$ converges.
	\end{enumerate}	
\end{defn}
\begin{remark}
	One can show directly that for any $A$-sampling-scale sequence, $e_j\in\Dom(A^*)$ and that $A^* e_j$ is the weak limit of $(A_n e_j^{(n)})_{n\in\mathbb{N}}$, hence the term "strong" for Property (\ref{property_compat_strong}). We will not use that here, and so we omit the proof.
\end{remark}
We end this section by showing that not only do $A$-sampling-scale sequences exist, but that a suitable scale exists given any sampling sequence.

\begin{prop}\label{proposition_scale_existence}
	For any sampling sequence $(H_n,A_n,\Omega_n)_{n\in\mathbb{N}}$ for $A$, there exists a scale $((e_j^{(n)},c_j^{(n)})_{j=1}^{N_n})_{n\in\mathbb{N}}$ such that $(S_n)_{n\in\mathbb{N}}=(H_n,A_n,\Omega_n,(e_j^{(n)},c_j^{(n)})_{j=1}^{N_n})_{n\in\mathbb{N}}$ is a strong $A$-sampling-scale sequence.
\end{prop}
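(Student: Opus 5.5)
The plan is to build the scale from a dense sequence in $\Dom(A)$ and to append the eigenbasis $\Omega_n$ at the end of each finite list. The appended vectors get weights so small that they do not affect any of the limits.

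\emph{Step 1: choosing the leading vectors.} Fix a sequence $(d_k)_{k\in\mathbb{N}}$ of nonzero vectors in $\Dom(A)$ that is dense in $H$; this is possible since $\Dom(A)$ is dense and $H$ is separable. By Property (\ref{property_sampling_approx}), for each $k$ choose $x_{k,n}\in H_n$ with $x_{k,n}\to d_k$ and $A_nx_{k,n}\to Ad_k$. Each $k$ then has a threshold $n_k$ such that for all $n\geq n_k$ we have
\[
x_{k,n}\neq0,\qquad \|x_{k,n}\|\leq 2\|d_k\|,\qquad \|A_nx_{k,n}\|\leq\|Ad_k\|+1 .
\]
Set $K_n=\max\{k : n_i\leq n \text{ for all } i\leq k\}$, with $K_n=0$ if this set is empty. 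Then $K_n$ is nondecreasing and tends to $\infty$.

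\emph{Step 2: the raw list and weights.} For each $n$, list $e_j^{(n)}=x_{j,n}$ for $j\leq K_n$, followed by the elements $f_1,\dots,f_{m_n}$ of $\Omega_n$, where $m_n=\dim H_n$. So $N_n=K_n+m_n$, and Properties (\ref{property_scale_elements}) (nonzero vectors) and (\ref{property_compat_H}) hold. Property (\ref{property_compat_Omega}) holds trivially, because each $f\in\Omega_n$ appears in the list and $\langle f,f\rangle=1$. Since $K_n\to\infty$, we get $N_n\to\infty$.

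Take the raw weights
\[
b_j=\frac{2^{-j}}{1+\|d_j\|^2+(\|Ad_j\|+1)^2}
\]
for the leading vectors. Give each appended $f_i$, with eigenvalue $\lambda_i$, the raw weight $\beta_n>0$, chosen so that
\[
\beta_n\, m_n\leq 2^{-n}\quad\text{and}\quad \beta_n\sum_i|\lambda_i|^2\leq 2^{-n}.
\]

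\emph{Step 3: normalisation and the limits.} Let
\[
S_n=\sum_{j\leq K_n}b_j\|x_{j,n}\|^2+\beta_n m_n,
\]
and define $c_j^{(n)}=b_j/S_n$ and the weight of $f_i$ to be $\beta_n/S_n$. Property (\ref{property_scale_proba}) then holds by construction.

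By dominated convergence, with dominating sequence $4b_j\|d_j\|^2$, we get $S_n\to S:=\sum_j b_j\|d_j\|^2\in(0,\infty)$. Since also each $S_n>0$, the sequence $S_n$ is bounded below by some $s>0$.

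For fixed $j$ we have $j\leq K_n$ eventually, so $e_j^{(n)}=x_{j,n}\to d_j=:e_j\neq0$ and $c_j^{(n)}\to b_j/S=:c_j>0$; this is Property (\ref{property_scale_limits}). The limits $e_j=d_j$ are dense, so Property (\ref{property_scale_dense}) holds. Also $\sum_jc_j\|e_j\|^2=S/S=1$, which is Property (\ref{property_scale_bias}). Strongness, Property (\ref{property_compat_strong}), follows because $A_ne_j^{(n)}=A_nx_{j,n}\to Ad_j$.

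\emph{Step 4: the uniform bound.} For Property (\ref{property_compat_A}),
\[
\sum_{j=1}^{N_n}c_j^{(n)}\|A_ne_j^{(n)}\|^2\leq s^{-1}\Big(\sum_j b_j(\|Ad_j\|+1)^2+\beta_n\sum_i|\lambda_i|^2\Big)\leq s^{-1}(1+1),
\]
which is uniform in $n$.

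The main obstacle is reconciling three requirements at once: every $f\in\Omega_n$ must be detected (Property (\ref{property_compat_Omega})), the vectors must converge index-wise, and the weighted $A_n$-norms must stay uniformly bounded even though the eigenvalues of $A_n$ may blow up. Appending $\Omega_n$ after the slowly growing block of length $K_n$, with weights $\beta_n$ scaled against both $m_n$ and $\sum_i|\lambda_i|^2$, is what resolves this.
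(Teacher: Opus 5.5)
Your construction is correct, and it takes a genuinely different route from the paper. One small repair is needed first. As written, $K_n=\max\{k : n_i\le n \text{ for all } i\le k\}$ can be undefined: nothing prevents the thresholds $n_i$ from being bounded (for instance all equal to $1$), and then the set is all of $\mathbb{N}$ and $N_n$ is not finite. Choosing the thresholds with $n_k\ge k$, or capping $K_n$ at $n$, fixes this. With that change the rest goes through as you wrote it: the domination by $4b_j\|d_j\|^2$, the limit $S_n\to S>0$, the uniform lower bound $s$, and the bound $2/s$ in Step 4.

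The paper's proof works as follows.
\begin{itemize}
\item It takes an orthonormal basis $(g_j)\subset\Dom(A)$ and fixes $N_n=n+1$.
\item For $j\le n$ it uses the approximants $g_j^{(n)}$, assumed nonzero without loss of generality. It then adds a single extra vector $e_{n+1}^{(n)}=\sum_{f\in\Omega_n}f$, which has inner product $1$ with every $f\in\Omega_n$.
\item Its weights depend on $n$ and normalize themselves: $a_j^{(n)}=1/\bigl(2^j\max(\|e_j^{(n)}\|^2,\|A_ne_j^{(n)}\|^2)\bigr)$. This gives $a_j^{(n)}\|e_j^{(n)}\|^2\le 2^{-j}$ and $a_j^{(n)}\|A_ne_j^{(n)}\|^2\le 2^{-j}$ for every index, including the appended one, with no thresholds.
\item Dominated convergence then yields $c_j^{(n)}\to c_j$. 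The normalizer is bounded below by $a_1^{(n)}\|e_1^{(n)}\|^2\to a_1\|e_1\|^2>0$, which gives Property (\ref{property_compat_A}).
\end{itemize}

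Your version instead fixes $n$-independent raw weights $b_j$, computed from the limits $d_j$ and $Ad_j$. That choice is why you need the thresholds and the slowly growing cutoff $K_n$ to control $\|x_{j,n}\|$ and $\|A_nx_{j,n}\|$ uniformly. You also append all of $\Omega_n$, with a separately tuned weight $\beta_n$ that absorbs possibly large eigenvalues.

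The paper's device is shorter: one formula handles both the domination and the $A_n$-bound, and $N_n$ is explicit. Your approach has its own advantages:
\begin{itemize}
\item Property (\ref{property_compat_Omega}) becomes completely trivial.
\item It avoids the ``without loss of generality nonzero'' adjustment, because a vector enters the list only once its approximants behave.
\item It works directly with any dense sequence in $\Dom(A)$.
\item The leading $c_j^{(n)}$ depend on $n$ only through the normalizer.
\end{itemize}
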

\begin{proof}
	Let $(H_n,A_n,\Omega_n)_{n\in\mathbb{N}}$ be an arbitrary sampling sequence for $A$.
	
	Let $(g_j)_{j\in\mathbb{N}}$ be a Hilbert basis of $H$ such that for all $j\in\mathbb{N}$, $g_j\in\operatorname{dom}(A)$. It exists, since $H$ is separable and $\operatorname{dom}(A)$ is dense. Given $j\in\mathbb{N}$, let $(g_j^{(n)})_{n\in\mathbb{N}}$ be a sequence given by Property (\ref{property_sampling_approx}) of Definition \ref{definition_sampling} used on $g_j$. As such, $g_j^{(n)}\in H_n$, $g_j=\lim_{n\rightarrow\infty}g_j^{(n)}$ and $Ag_j=\lim_{n\rightarrow\infty}A_ng_j^{(n)}$. Since $\dim(H_n)>0$ for each $n$, and since $g_j\neq 0$ for each $j$, we can assume without loss of generality that $g_j^{(n)}$ is always nonzero. 
	
	For $n\in\mathbb{N}$, let $N_n:=n+1$. For $j\in[N_n]$, let $$e_j^{(n)}=\begin{cases}g_j^{(n)} \quad \text{ if } j\leq n\\\sum_{f\in\Omega_n}f \quad \text{ otherwise. }\end{cases}$$ 
	Furthermore, for such $j$, let 
	\begin{align*}
		a_j^{(n)}&=\frac{1}{2^j\max\left(\|e_j^{(n)}\|^2,\|A_n e_j^{(n)}\|^2\right)}\\
		c_j^{(n)}&=\frac{1}{\sum_{l=1}^{N_n}a_l^{(n)}\|e_l^{(n)}\|^2}a_j^{(n)}.	
	\end{align*}

	We first establish that $((e_j^{(n)},c_j^{(n)})_{j=1}^{N_n})_{n\in\mathbb{N}}$ is a low-dimensional-biased scale. Properties (\ref{property_scale_elements}), (\ref{property_scale_proba}) and (\ref{property_scale_infinity}) are directly verified. Furthermore, it is clear that for $j\in\mathbb{N},$ \begin{align*}e_j=\lim_{n\rightarrow\infty}e_j^{(n)}&=\lim_{n\rightarrow\infty}g_j^{(n)}=g_j,\\ \lim_{n\rightarrow\infty}A_ne_j^{(n)}&=\lim_{n\rightarrow\infty}A_ng_j^{(n)}=Ag_j,
	\end{align*} 
	and that $\lim_{n\rightarrow\infty}a_j^{(n)}=\frac{1}{2^j\max(1,\|Ag_j\|^2)}=:a_j.$ Since $0<a_j^{(n)}\|e_j^{(n)}\|^2\leq\frac{1}{2^j}$, we use the Dominated Convergence Theorem to conclude that $$c_j=\lim_{n\rightarrow\infty}c_j^{(n)}=\frac{1}{\sum_{l\in\mathbb{N}}a_l\|g_l\|^2}a_j>0.$$ Thus, Property (\ref{property_scale_limits}) holds. $e_j=g_j$ ensures that Property (\ref{property_scale_dense}) holds, while Property (\ref{property_scale_bias}) follows from that last equality. Thus, $((e_j^{(n)},c_j^{(n)})_{j=1}^{N_n})_{n\in\mathbb{N}}$ is a low-dimensional biased scale.
	
	We now show that $(S_n)_{n\in\mathbb{N}}$ is a strong $A$-sampling-scale sequence. Property (\ref{property_compat_H}) follows from the definition of $e_j^{(n)}$ and Property (\ref{property_compat_Omega}) follows from the fact that for any $f\in\Omega_n,$ $\LL e_j^{n+1}, f\RR=1.$ Finally, we have that  for any $n\in\mathbb{N},$
	\begin{align*}
		\sum_{j=1}^{N_n} c_j^{(n)}\|A_ne_j^{(n)}\|^2&=\frac{1}{\sum_{l=1}^{N_n}a_l^{(n)}\|e_l^{(n)}\|^2}\sum_{j=1}^{N_n}a_j^{(n)}\|A_ne_j^{(n)}\|^2\\
		&\leq \frac{1}{a_1^{(n)}\|e_1^{(n)}\|^2}\sum_{j=1}^{N_n}\frac{1}{2^j}<\frac{1}{a_1^{(n)}\|e_1^{(n)}\|^2}.
	\end{align*}
	That last sequence converges to $\frac{1}{a_1\|e_1\|^2}$ and as such is bounded above, thus Property (\ref{property_compat_A}) holds. Therefore, $(S_n)_{n\in\mathbb{N}}$ is an $A$-sampling-scale sequence. Since we have already established that $\lim_{n\rightarrow\infty} A_ne_j^{(n)}=A g_j$ for any $j\in\mathbb{N}$, we conclude that Property (\ref{property_compat_strong}) holds and $(S_n)_{n\in\mathbb{N}}$ is strong.
\end{proof}
\begin{remark}\label{remark_sampling_scale_subsequences}
	Given $S=(S_n)_{n\in\mathbb{N}}$, an $A$-sampling-scale sequence, it is direct to show that any subsequence $S'=(S_{n_m})_{m\in\mathbb{N}}$ is also an $A$-sampling-scale sequence.
\end{remark}
\section{The finite dimensional isometries}

We now fix the (not necessarily strong) $A$-sampling-scale sequence $$S=(S_n)_{n\in\mathbb{N}}=(H_n,A_n,\Omega_n,(e_j^{(n)},c_j^{(n)})_{j=1}^{N_n})_{n\in\mathbb{N}}.$$ We also denote the eigenvalue function of $A_n$ by $\lambda_n:\Omega_n\rightarrow \mathbb{R}.$
\begin{definition}
	For $n\in\mathbb{N}$, let $\mu_n:\mathcal{P}(\Omega_n)\rightarrow\mathbb{R}_{\geq 0}$ with $$\mu_n(V)=\sum_{j=1}^{N_n} c_j^{(n)}\|\Proj_{\Span(V)}e_j^{(n)}\|^2=\sum_{f\in V}\sum_{j=1}^{N_n}c_j^{(n)}|\langle e_j^{(n)}, f\rangle|^2. $$
\end{definition}
\begin{remark}
	For $f\in\Omega_n$, we also note $\mu_n(f):=\mu_n(\{f\})=\sum_{j=1}^{N_n}c_j^{(n)}|\langle e_j^{(n)}, f\rangle|^2.$
\end{remark}

The following is a direct result from Properties (\ref{property_scale_proba}) and (\ref{property_compat_Omega}).
\begin{prop}
	We have that $\mu_n$ is a probability measure on $\mathcal{P}(\Omega_n)$. Furthermore, $\mu_n(f)>0$ holds for any $f\in\Omega_n$.
\end{prop}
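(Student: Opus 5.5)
The plan is to verify three things directly from the definition of $\mu_n$: that it is countably additive on $\mathcal{P}(\Omega_n)$, that $\mu_n(\Omega_n)=1$, and that every singleton has positive mass. Since $\Omega_n$ is a finite set, countable additivity reduces to finite additivity. This is immediate from the second expression for $\mu_n$. For $V\subset\Omega_n$,
$$\mu_n(V)=\sum_{f\in V}\mu_n(f),\qquad \mu_n(f)=\sum_{j=1}^{N_n}c_j^{(n)}|\LL e_j^{(n)},f\RR|^2\geq 0.$$
So $\mu_n$ is the sum of point masses with nonnegative weights, and disjoint unions give sums. To justify the equality between the two expressions in the definition, I would note that since $V\subset\Omega_n$ is an orthonormal family, $\|\Proj_{\Span(V)}x\|^2=\sum_{f\in V}|\LL x,f\RR|^2$ for every $x\in H$. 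Then I would exchange the two finite sums.

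For total mass, I take $V=\Omega_n$. Since $\Omega_n$ is an orthonormal basis of $H_n$, $\Proj_{\Span(\Omega_n)}=\Proj_{H_n}$. By Property (\ref{property_compat_H}), each $e_j^{(n)}\in H_n$, so $\Proj_{H_n}e_j^{(n)}=e_j^{(n)}$. Hence
$$\mu_n(\Omega_n)=\sum_{j=1}^{N_n}c_j^{(n)}\|e_j^{(n)}\|^2=1$$
by Property (\ref{property_scale_proba}). Note that Property (\ref{property_compat_H}) is genuinely needed here: without it only the inequality $\leq 1$ would follow from Bessel's inequality.

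For positivity, fix $f\in\Omega_n$. Property (\ref{property_compat_Omega}) gives some $j\leq N_n$ with $\LL e_j^{(n)},f\RR\neq 0$. Property (\ref{property_scale_elements}) gives $c_j^{(n)}>0$. That single term is strictly positive and all the other terms are nonnegative, so $\mu_n(f)>0$.

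There is no real obstacle. The only point needing care is the use of Property (\ref{property_compat_H}), which ensures the projection does not lose mass.
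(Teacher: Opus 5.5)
Your proof is correct and is essentially the paper's argument, which simply cites Property (\ref{property_scale_proba}) for total mass $1$ and Property (\ref{property_compat_Omega}) for positivity of singletons. Your observation that Property (\ref{property_compat_H}) is also needed (so that $\Proj_{H_n}e_j^{(n)}=e_j^{(n)}$ and the total mass is exactly $1$ rather than at most $1$) is accurate, and it fills in a step the paper leaves implicit.
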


Thus, as vector spaces, we have that $L_2(\Omega_n,\mu_n)=\mathbb{K}^{\Omega_n}$. This allows us to define the following unitary maps.

\begin{definition}
	For $n\in\mathbb{N},$ let $U_n:H_n\rightarrow L_2(\Omega_n,\mu_n)$ be given by $$(U_n(x))(f)=\frac{\LL x, f\RR}{\sqrt{\mu_n(f)}}.$$
\end{definition}
\begin{prop}\label{proposition_finite_isometry}
	For any $n\in\mathbb{N}$, $U_n$ is a unitary map. Furthermore, for any given $x\in H_n$, we have $\lambda_n\cdot U_n(x)=U_n(A_nx).$ Finally, $|(U_n(e_j^{(n)}))(f)|^2\leq \frac{1}{c_j^{(n)}}$ whenever $j\leq N_n$ and $f\in\Omega_n.$
\end{prop}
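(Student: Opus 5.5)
The argument splits into three independent verifications, each a direct computation in the orthonormal eigenbasis $\Omega_n$ of $H_n$.

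\textbf{Unitarity.} Since $\mu_n(f)>0$ for all $f\in\Omega_n$ (by the preceding proposition), $U_n$ is well defined and clearly linear. For $x\in H_n$ we compute
\begin{align*}
\|U_n(x)\|^2_{L_2(\Omega_n,\mu_n)}=\sum_{f\in\Omega_n}\mu_n(f)\frac{|\LL x,f\RR|^2}{\mu_n(f)}=\sum_{f\in\Omega_n}|\LL x,f\RR|^2=\|x\|^2,
\end{align*}
where the last equality is Parseval's identity, valid because $\Omega_n$ is an orthonormal basis of $H_n$. Hence $U_n$ is an isometry. Surjectivity follows either from the equality of dimensions, $\dim H_n=|\Omega_n|=\dim \mathbb{K}^{\Omega_n}$, or explicitly: for $h\in\mathbb{K}^{\Omega_n}$, the vector $x=\sum_{f}\sqrt{\mu_n(f)}\,h(f)f$ satisfies $U_n(x)=h$. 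One must place conjugates consistently with the convention that the inner product is left-linear; with that convention, $\LL x,f\RR$ is the coefficient of $f$ in $x$.

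\textbf{Intertwining.} For $x\in H_n$ and $f\in\Omega_n$, symmetry of $A_n$ together with $A_nf=\lambda_n(f)f$ and $\lambda_n(f)\in\mathbb{R}$ gives
\begin{align*}
\LL A_nx,f\RR=\LL x,A_nf\RR=\overline{\lambda_n(f)}\LL x,f\RR=\lambda_n(f)\LL x,f\RR.
\end{align*}
The reality of $\lambda_n(f)$ comes from the symmetry of $A_n$. Dividing by $\sqrt{\mu_n(f)}$ yields $(U_n(A_nx))(f)=\lambda_n(f)(U_n(x))(f)$, which is the claimed identity $\lambda_n\cdot U_n(x)=U_n(A_nx)$.

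\textbf{Bound.} Since every term in the sum defining $\mu_n(f)$ is nonnegative, we have $\mu_n(f)\geq c_j^{(n)}|\LL e_j^{(n)},f\RR|^2$ for each $j\leq N_n$. Therefore
\begin{align*}
|(U_n(e_j^{(n)}))(f)|^2=\frac{|\LL e_j^{(n)},f\RR|^2}{\mu_n(f)}\leq\frac{1}{c_j^{(n)}}.
\end{align*}
If $\LL e_j^{(n)},f\RR=0$, the bound holds trivially.

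None of these steps is a genuine obstacle. The only points needing care are the conjugation convention and using the reality of the eigenvalues in the intertwining step.
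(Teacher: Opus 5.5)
Your proof is correct and follows essentially the same route as the paper's: the Parseval norm computation together with $\dim H_n=|\Omega_n|$ for unitarity, symmetry of $A_n$ and the reality of $\lambda_n(f)$ for the intertwining identity, and the single-term lower bound $\mu_n(f)\geq c_j^{(n)}|\LL e_j^{(n)},f\RR|^2$ for the estimate. Your explicit inverse $x=\sum_f\sqrt{\mu_n(f)}\,h(f)f$ is a harmless addition that the paper replaces with the dimension count.
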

\begin{proof}
	Linearity of $U_n$ is a direct result from its definition. Furthermore, for any $x\in H_n$, 
	\begin{align*}
		\|U_n(x)\|^2&=\int_{\Omega_n}|U_n(x)|^2d\mu_n=\sum_{f\in\Omega_n}\left|\frac{\LL x, f\RR}{\sqrt{\mu_n(f)}}\right|^2\mu_n(f)=\sum_{f\in\Omega_n}|\LL x,f\RR|^2=\|x\|^2.
	\end{align*}
	Thus, as a norm-preserving linear map, $U_n$ is an isometry. Since $\dim(H_n)=|\Omega_n|,$ $U_n$ is unitary.
	
	For the second part, let $x\in H_n.$ We note, for any $f\in\Omega_n$, that $$(\lambda_n\cdot U_n(x))(f)=\lambda_n(f)\frac{\LL x,f\RR}{\sqrt{\mu_n(f)}}=\frac{\LL x, A_n f\RR}{\sqrt{\mu_n(f)}}=\frac{\LL A_nx, f\RR}{\sqrt{\mu_n(f)}}=(U_n(A_nx))(f). $$
	
	Thus, $\lambda_n\cdot U_n(x)=U_n(A_nx).$ Finally, let $j\leq N_n,$ and let $f\in\Omega_n.$ We have
	\begin{align*}
		|(U_n(e_j^{(n)}))(f)|^2&=\frac{1}{c_j^{(n)}}\frac{c_j^{(n)}|\LL e_j^{(n)}, f\RR|^2}{\mu_n(f)}\leq \frac{1}{c_j^{(n)}},
	\end{align*}
	proving the inequality.
\end{proof}

We can now define our sequence of embeddings, as described in Section \ref{section_idea}.

\begin{definition}\label{definition_finite_embedding_cube}
	For $n\in\mathbb{N},$ let $X_n:\Omega_n\rightarrow Q$ be defined by $$\pi_j\circ X_n=\begin{cases}
		\sqrt{c_j^{(n)}}U_n(e_j^{(n)})\quad\text{ if $j\leq N_n$ }\\
		0\quad \text{ otherwise }
	\end{cases}$$ for $j\in\mathbb{N}.$
\end{definition}

We can now define the limit space $\HOM$ as well.

\begin{definition}\label{definition_limit_space}
	We define $\HOM\subset Q$ with $$\HOM=\bigcap_{n=1}^{\infty}\overline{\bigcup_{m=n}^{\infty}X_m(\Omega_m)}.$$ In other words, $x\in\hat{\Omega}$ if and only if there exists a subsequence $(\Omega_{n_m})_{m\in\mathbb{N}}$ and a sequence $(f_m\in\Omega_{n_m})_{n\in\mathbb{N}}$ such that $x=\lim_{m\rightarrow\infty}X_{n_m}(f_m).$
\end{definition}
\begin{remark}
	It is clear that $\HOM$ is a compact subset of $Q$. If $X_n(\Omega_n)$ converges under Hausdorff metric, then $\HOM$ is the limit. In fact, $\HOM$ is the union of all Hausdorff limit points of $(X_n(\Omega_n))_{n\in\mathbb{N}}.$
\end{remark}

We now define the notion of convergence for sampling-scale sequences.

\begin{definition}
	We say that the $A$-sampling-scale sequence $S$ is \textbf{convergent} if for any continuous $h:Q\rightarrow\mathbb{K},$ the sequence $$\left(\int_{\Omega_n} h\circ X_n d\mu_n\right)_{n\in\mathbb{N}}$$ converges.
\end{definition}
\begin{remark}\label{remark_weak_star_equivalence}
	For $n\in\mathbb{N},$ let $\HMU_n=X_n^{\#}(\mu_n)$ be the push-forward probability measure of $\mu_n$ on $\Borel(Q)$ through $X_n$. Then, $S$ is convergent if and only if $(\HMU_n)_{n\in\mathbb{N}}$ is weak* convergent in $C(Q)'$.
\end{remark}
\begin{prop}
	Any $A$-sampling-scale sequence $S=(S_n)_{n\in\mathbb{N}}$ has a convergent subsequence $S'=(S_{n_m})_{m\in\mathbb{N}}$. In particular, there always exists a convergent $A$-sampling-scale sequence.
\end{prop}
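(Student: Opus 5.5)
By Remark \ref{remark_weak_star_equivalence}, it suffices to extract a subsequence along which the push-forward measures $\HMU_n=X_n^{\#}(\mu_n)$ converge weak* in $C(Q)'$. The plan is to use compactness of the unit ball together with separability, so that we can extract by a diagonal argument instead of invoking the full (non-sequential) Banach--Alaoglu theorem.

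First, note that each $X_n$ indeed maps into $Q$. By Proposition \ref{proposition_finite_isometry}, $|(\sqrt{c_j^{(n)}}U_n(e_j^{(n)}))(f)|^2\leq 1$, so every coordinate lies in $\overline{\mathbb{D}}$. Since $\Omega_n$ is finite, $X_n$ is trivially measurable, and $\HMU_n$ is a Borel probability measure on $Q$. Consequently, for every continuous $h:Q\rightarrow\mathbb{K}$,
$$\left|\int_{\Omega_n}h\circ X_n\,d\mu_n\right|=\left|\int_Q h\,d\HMU_n\right|\leq\|h\|_\infty.$$

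Next, since $Q$ is a compact metrizable space, $C(Q)$ is separable, for instance via Stone--Weierstrass applied to polynomials in finitely many coordinates $\pi_j,\overline{\pi_j}$ with rational coefficients. Fix a dense sequence $(h_k)_{k\in\mathbb{N}}$ in $C(Q)$. For each $k$, the sequence of scalars $\int h_k\,d\HMU_n$ is bounded by $\|h_k\|_\infty$, so Bolzano--Weierstrass gives a convergent subsequence. We extract nested subsequences for $k=1,2,\dots$ and take the diagonal subsequence $(n_m)_{m\in\mathbb{N}}$, along which $\int h_k\,d\HMU_{n_m}$ converges for every $k$.

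To pass to an arbitrary $h\in C(Q)$, we use an $\epsilon/3$ argument. Given $\epsilon>0$, choose $k$ with $\|h-h_k\|_\infty<\epsilon/3$. Because each $\HMU_{n}$ has total mass $1$,
$$\left|\int h\,d\HMU_{n_m}-\int h\,d\HMU_{n_{m'}}\right|\leq \tfrac{2\epsilon}{3}+\left|\int h_k\,d\HMU_{n_m}-\int h_k\,d\HMU_{n_{m'}}\right|,$$
and the right-hand side is below $\epsilon$ for $m,m'$ large. Hence the sequence is Cauchy in $\mathbb{K}$ and therefore converges, so $S'=(S_{n_m})_{m\in\mathbb{N}}$ is convergent. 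By Remark \ref{remark_sampling_scale_subsequences}, $S'$ is again an $A$-sampling-scale sequence.

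For the final claim, combine Proposition \ref{proposition_sampling_existence} and Proposition \ref{proposition_scale_existence} to obtain some $A$-sampling-scale sequence, then apply the above. The only step that needs care is the separability of $C(Q)$ and the uniform bound by total mass; the rest is routine.
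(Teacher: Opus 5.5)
Your proof is correct and takes essentially the same route as the paper. Both arguments use the separability of $C(Q)$ and the bound $\|\HMU_n\|=1$ in $C(Q)'$ to extract a weak* convergent subsequence, then conclude via Remark \ref{remark_weak_star_equivalence}. The differences are small: you prove sequential weak* compactness by hand (diagonal extraction plus an $\epsilon/3$ argument) where the paper cites Banach--Alaoglu with metrizability, and you also spell out two points the paper leaves implicit, namely that $X_n(\Omega_n)\subset Q$ and the ``in particular'' clause.
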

\begin{proof}
	Since $Q$ is a compact metrizable space, $C(Q)$ is a separable Banach space. By Alaoglu's Theorem, the unit ball of $C(Q)'$ is a metrizable compact space under weak* topology. 
	
	For $n\in\mathbb{N},$ we define $\HMU_n$ as per Remark \ref{remark_weak_star_equivalence}. Then, since each $\HMU_n$ is a probability measure, we have that $\|\HMU_n\|=1$ in $C(\HOM)'$. Therefore, there exists a weak* convergent subsequence $(\HMU_{n_m})_{m\in\mathbb{N}}$. Using Remark \ref{remark_weak_star_equivalence}, we conclude that  $S'=(S_{n_m})_{m\in\mathbb{N}}$ is convergent.
\end{proof}
\section{Resulting probability measure}
We now assume that the fixed $A$-sampling-scale sequence $$S=(S_n)_{n\in\mathbb{N}}=(H_n,A_n,\Omega_n,(e_j^{(n)},c_j^{(n)})_{j=1}^{N_n})_{n\in\mathbb{N}}$$ is convergent. We now define the measure $\HMU$ on $\Borel(Q)$ using the following, which is a consequence of the Riesz-Markov-Kakutani representation theorem.

\begin{prop}\label{proposition_riesz_markov}
	There exists a unique probability measure $\HMU:\Borel(Q)\rightarrow\mathbb{R}_{\geq 0}$ such that for any continuous function $h:Q\rightarrow\mathbb{K}$, $$\int_{Q}h d\HMU=\lim_{n\rightarrow\infty}\int_{\Omega_n}(h\circ X_n)d\mu_n.$$
\end{prop}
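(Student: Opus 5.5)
The plan is to define a functional on $C(Q)$ from the limits, check that it is positive and normalized, and then let the Riesz--Markov--Kakutani theorem produce the measure. Define $\Lambda:C(Q)\rightarrow\mathbb{K}$ by
$$\Lambda(h)=\lim_{n\rightarrow\infty}\int_{\Omega_n}(h\circ X_n)\,d\mu_n=\lim_{n\rightarrow\infty}\int_Q h\,d\HMU_n,$$
where $\HMU_n=X_n^{\#}(\mu_n)$ as in Remark \ref{remark_weak_star_equivalence}. The limit exists for every continuous $h$ precisely because $S$ is assumed convergent. The second expression is the change-of-variables formula for push-forward measures; on the finite set $\Omega_n$ it is just $\sum_{f\in\Omega_n}h(X_n(f))\mu_n(f)$.

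Next I would verify the properties of $\Lambda$, each of which passes to the limit from the corresponding property of $\mu_n$:
\begin{itemize}
	\item Linearity follows from linearity of each integral and of limits.
	\item Positivity holds because $h\geq 0$ gives $\int h\circ X_n\,d\mu_n\geq 0$ for every $n$, and the limit of nonnegative reals is nonnegative.
	\item Normalization holds because $\Lambda(\mathbf{1})=\lim_n\mu_n(\Omega_n)=1$, using that each $\mu_n$ is a probability measure.
	\item Boundedness holds because $|\Lambda(h)|\leq\|h\|_\infty$.
\end{itemize}
When $\mathbb{K}=\mathbb{C}$, I would note that $\Lambda(\bar h)=\overline{\Lambda(h)}$. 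This lets me work with the real functional on $C(Q,\mathbb{R})$ and extend by linearity.

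Since $Q$ is a compact metrizable space, hence compact Hausdorff, the Riesz--Markov--Kakutani theorem gives a unique regular Borel measure $\HMU$ with $\Lambda(h)=\int_Q h\,d\HMU$ for all real continuous $h$. The complex case follows by splitting $h$ into real and imaginary parts. Taking $h=\mathbf{1}$ shows $\HMU(Q)=1$, so $\HMU$ is a probability measure. Because $Q$ is metrizable, every finite Borel measure on $Q$ is automatically regular. So the regularity clause in the uniqueness statement can be dropped, and $\HMU$ is unique among all Borel probability measures satisfying the identity.

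I expect no serious obstacle; the one point needing care is uniqueness. Two finite Borel measures that agree on all of $C(Q)$ must coincide, and the justification goes through regularity on metric spaces: any closed $F$ is approximated by the continuous functions $h_k(x)=\max(0,1-k\,d(x,F))$, which decrease to $\mathbf{1}_F$. Monotone convergence then shows the two measures agree on closed sets, and a $\pi$--$\lambda$ argument extends this to all Borel sets.
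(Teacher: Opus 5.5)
Your proposal is correct and takes essentially the paper's approach: the paper gives no separate argument and simply cites the Riesz--Markov--Kakutani theorem applied to the functional $h\mapsto\lim_{n}\int_{\Omega_n}(h\circ X_n)\,d\mu_n$, whose limits exist because $S$ is convergent. Your checks of positivity and normalization, the reduction of the complex case to the real one, and your argument that uniqueness holds among all Borel probability measures (via automatic regularity on the metrizable space $Q$) fill in details the paper leaves implicit.
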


In general, one cannot calculate $\HMU(V)$ directly as the limit of $\HMU_n(V)=\mu_n(X_n^{-1}(V))$, as $\HMU$ can have a disjoint support from all the finite measures. This is why the use of continuous functions was required to define convergence. The next proposition, however, is useful for such direct calculations of $\HMU$ using values of $\mu_n$.
\begin{prop}\label{proposition_open_measure}
	If $V$ is an open subset of $Q$, then $\HMU(V)\leq \liminf_{n\rightarrow\infty} \mu_n(X_n^{-1}(V)).$
\end{prop}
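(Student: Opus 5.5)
This is the open-set half of the Portmanteau theorem, and I would prove it the standard way: approximate $\mathbf{1}_V$ from below by continuous functions, then use Proposition \ref{proposition_riesz_markov} together with inner regularity of $\HMU$.

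First, since $Q$ is compact metrizable, fix a metric $d$ inducing its topology. If $V=Q$ the inequality is trivial, since both sides equal $1$. Otherwise, for $k\in\mathbb{N}$, define
$$h_k(x)=\min\bigl(1,\,k\,d(x,Q\setminus V)\bigr).$$
Each $h_k$ is continuous with values in $[0,1]$ and vanishes outside $V$, so $0\leq h_k\leq \mathbf{1}_V$. Moreover, because $V$ is open, $d(x,Q\setminus V)>0$ for every $x\in V$; hence $h_k\uparrow \mathbf{1}_V$ pointwise as $k\to\infty$.

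Next, fix $k$. For each $n$ we have $h_k\circ X_n\leq \mathbf{1}_{X_n^{-1}(V)}$ on $\Omega_n$, and therefore
$$\int_{\Omega_n}(h_k\circ X_n)\,d\mu_n\leq \mu_n(X_n^{-1}(V)).$$
Since $h_k$ is continuous, Proposition \ref{proposition_riesz_markov} says the left side converges to $\int_Q h_k\,d\HMU$. Taking $\liminf$ on both sides gives
$$\int_Q h_k\,d\HMU\leq \liminf_{n\rightarrow\infty}\mu_n(X_n^{-1}(V)).$$

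Finally, let $k\to\infty$. By monotone convergence, $\int_Q h_k\,d\HMU\to\HMU(V)$, and the right-hand side does not depend on $k$, so the inequality follows. There is no real obstacle here. The one point needing care is that the approximating functions must increase to $\mathbf{1}_V$ exactly on $V$; openness gives this through the positive distance to the complement. This avoids any appeal to regularity of $\HMU$ beyond monotone convergence.
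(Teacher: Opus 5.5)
Your proof is correct and follows the same strategy as the paper: bound $\int_Q h\,d\HMU$ by $\liminf_{n\rightarrow\infty}\mu_n(X_n^{-1}(V))$ for continuous $0\leq h\leq \mathbf{1}_V$ using Proposition \ref{proposition_riesz_markov}, then recover $\HMU(V)$ as the supremum of such integrals. The only difference is how that supremum is identified: the paper uses inner regularity of $\HMU$ by compact sets together with Urysohn-type functions, whereas your explicit increasing sequence $h_k=\min(1,k\,d(\cdot,Q\setminus V))$ and monotone convergence make this step self-contained.
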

\begin{proof}
	Let $V$ be an open subset of $Q$. Since $Q$ is a compact metric space and $\HMU$ is a probability measure on its Borel sets, we have that $\HMU$ is Radon. Thus, we know $\HMU(V)=\sup(\{\HMU(K)\;|\; K \text{ is a compact subset of $V$ } \}).$ However, since $Q$ is a metric space, for any compact subset $K$ of $V$, there exists a continuous $h:Q\rightarrow [0,1]$ supported in $V$ such that $h|_K =1$, thus $\HMU(K)\leq \int_{Q} h d\HMU \leq \HMU(V).$ Therefore, we have $$\HMU(V)=\sup(\{\int_{Q}hd\HMU\;|\; h:Q\rightarrow[0,1] \text{ is continuous and supported in $V$ } \}).$$
	
	For any such continuous $h$ supported in $V$ with values in $[0,1]$, we have 
	\begin{align*}
		\int_{Q}hd\HMU=\lim_{n\rightarrow\infty}\int_{\Omega_n}(h\circ X_n)d\mu_n\leq\liminf_{n\rightarrow\infty}\int_{\Omega_n}(\mathbf{1}_V\circ X_n)d\mu_n=\liminf_{n\rightarrow\infty} \mu_n(X_n^{-1}(V)).
	\end{align*}
	Thus, by definition of $\sup$, $\HMU(V)\leq\liminf_{n\rightarrow\infty} \mu_n(X_n^{-1}(V)).$
\end{proof}
\begin{cor}\label{corollary_suport_limits}
	We have that $\HMU$ is supported on $\HOM$. In other words, $\HMU(Q\setminus\HOM)=0.$
\end{cor}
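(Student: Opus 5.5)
We will show that $Q\setminus\HOM$ is a countable union of open sets, each of which has $\HMU$-measure zero by Proposition \ref{proposition_open_measure}.

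Write $F_n=\overline{\bigcup_{m=n}^{\infty}X_m(\Omega_m)}$, so that $\HOM=\bigcap_{n}F_n$ and
$$Q\setminus\HOM=\bigcup_{n=1}^\infty (Q\setminus F_n).$$
Since the sets $F_n$ are closed and decreasing in $n$, the sets $V_n:=Q\setminus F_n$ are open and increasing. By countable subadditivity (or continuity from below), it suffices to show that $\HMU(V_n)=0$ for every $n\in\mathbb{N}$.

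Fix $n$. For every $m\geq n$ we have $X_m(\Omega_m)\subset F_n$, hence $X_m^{-1}(V_n)=\emptyset$ and $\mu_m(X_m^{-1}(V_n))=0$. Because $V_n$ is open, Proposition \ref{proposition_open_measure} gives
$$\HMU(V_n)\leq\liminf_{m\rightarrow\infty}\mu_m(X_m^{-1}(V_n))=0.$$
Therefore $\HMU(Q\setminus\HOM)\leq\sum_n\HMU(V_n)=0$.

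There is no real obstacle here. The only point needing care is that we must work with the complement of each closed set $F_n$ separately, because $Q\setminus\HOM$ need not itself be open. For any fixed $n$, the tail sequence $\mu_m(X_m^{-1}(V_n))$ is eventually identically zero, so the liminf bound applies directly.
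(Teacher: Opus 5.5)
Your proof is correct and follows essentially the same route as the paper: you write $Q\setminus\HOM$ as the countable union of the open sets $Q\setminus\overline{\bigcup_{m\geq n}X_m(\Omega_m)}$, observe that their preimages under $X_m$ are empty for $m\geq n$, and apply Proposition~\ref{proposition_open_measure}. You also make explicit the countable-subadditivity step that the paper leaves implicit, and you state the conclusion $\HMU(V_n)=0$ correctly, whereas the paper's final line contains a typo writing $\HMU(X_n^{-1}(\cdot))$ instead.
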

\begin{proof}
	Since $\HOM=\bigcap_{k=1}^\infty\overline{\bigcup_{m=k}^{\infty}X_m(\Omega_m)},$ it is sufficient to establish that for any $k$, $\HMU(Q\setminus\overline{\bigcup_{m=k}^{\infty}X_m(\Omega_m)})=0.$ We have, for any $n\geq k,$ that $X_n^{-1}\left(Q\setminus\overline{\bigcup_{m=k}^{\infty}X_m(\Omega_m)}\right)$ is empty, thus $\liminf_{n\rightarrow\infty} \mu_n\left(X_n^{-1}\left(Q\setminus\overline{\bigcup_{m=k}^{\infty}X_m(\Omega_m)}\right)\right)=0.$  
	
	Since $Q\setminus\overline{\bigcup_{m=k}^{\infty}X_m(\Omega_m)}$ is open, we have $\HMU\left(X_n^{-1}\left(Q\setminus\overline{\bigcup_{m=k}^{\infty}X_m(\Omega_m)}\right)\right)=0$ from Proposition \ref{proposition_open_measure}.
\end{proof}
\begin{remark}
	Corollary \ref{corollary_suport_limits} allows us to identify $L_2(\HOM,\HMU)=L_2(Q,\HMU),$ with all relevant objects of Theorem \ref{theorem_spectral_embedding} restricting naturally on $\HOM$ as long as they have been suitably defined on $Q$. However, it remains useful to work in $Q$ to first establish such definitions.
\end{remark}

We end this section with another consequence of Proposition \ref{proposition_open_measure}, which we will use in the next section, after having proven the following lemma. 

\begin{lemma}\label{lemma_uniform_series}
	For any $\epsilon>0,$ there exists $J_{\epsilon}\in\mathbb{N}$ such that for any $n\in\mathbb{N},$ $$\sum_{J_{\epsilon}<j\leq N_n}c_j^{(n)}\|e_j^{(n)}\|^2<\epsilon.$$
\end{lemma}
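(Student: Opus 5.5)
The plan is to turn the tail sum into the complement of a head sum, using Property (\ref{property_scale_proba}). For each $n$ this gives
$$\sum_{J<j\leq N_n}c_j^{(n)}\|e_j^{(n)}\|^2=1-\sum_{j=1}^{\min(J,N_n)}c_j^{(n)}\|e_j^{(n)}\|^2 .$$
The head sum involves only the finitely many indices $j\leq J$, so for fixed $J$ it converges as $n\to\infty$ by Property (\ref{property_scale_limits}).

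Fix $\epsilon>0$. By Property (\ref{property_scale_bias}), choose $J_0$ with $\sum_{j\leq J_0}c_j\|e_j\|^2>1-\epsilon/2$. By Property (\ref{property_scale_infinity}), $N_n\geq J_0$ for all $n$ beyond some $n_0$. For such $n$ the head sum $\sum_{j\leq J_0}c_j^{(n)}\|e_j^{(n)}\|^2$ is a finite sum of convergent terms, and its limit is $\sum_{j\leq J_0}c_j\|e_j\|^2$; this uses continuity of the norm and of products. So, after enlarging $n_0$ if needed, the head sum exceeds $1-\epsilon$ for all $n\geq n_0$. The identity above then gives the tail bound $<\epsilon$ for $n\geq n_0$, with $J=J_0$.

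That leaves the finitely many indices $n<n_0$. Set
$$J_\epsilon=\max\left(J_0,N_1,\dots,N_{n_0-1}\right).$$
For $n<n_0$ the tail sum over $J_\epsilon<j\leq N_n$ is empty, hence equals $0<\epsilon$. For $n\geq n_0$ the tail over $J_\epsilon<j\leq N_n$ is contained in the tail over $J_0<j\leq N_n$, and all terms are nonnegative, so it is still $<\epsilon$.

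The key step is the head/tail exchange via Properties (\ref{property_scale_proba}) and (\ref{property_scale_bias}). Uniformity in $n$ needs no dominated-convergence argument, since only finitely many terms are ever taken to the limit. The only care needed is to treat the small $n$ separately and to ensure $N_n\geq J_0$ before comparing the sums.
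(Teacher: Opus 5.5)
Your proof is correct and takes essentially the same route as the paper. You pick $J_0$ via Property (\ref{property_scale_bias}), use Properties (\ref{property_scale_infinity}) and (\ref{property_scale_limits}) to push the head sum above $1-\epsilon$ for large $n$, turn that into a tail bound via Property (\ref{property_scale_proba}), and absorb the finitely many small $n$ with $J_\epsilon=\max(J_0,N_1,\dots,N_{n_0-1})$. The only cosmetic difference is that the paper bounds each of the $J_0$ head terms within $\frac{\epsilon}{2J_0}$ of its limit, whereas you invoke convergence of the finite sum directly.
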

\begin{proof}
	Let $\epsilon>0.$ By Property (\ref{property_scale_bias}), let $J_0\in\mathbb{N}$ such that $\sum_{j=1}^{J_0}c_j\|e_j\|^2>1-\frac{\epsilon}{2}.$ Then, using Properties (\ref{property_scale_infinity}) and  (\ref{property_scale_limits}) on each $j\leq J_0$, let $K\in\mathbb{N}$ such that for any $n>K$, we have $N_n\geq J_0$ and  $c_j^{(n)}\|e_j^{(n)}\|^2>c_j\|e_j\|^2-\frac{\epsilon}{2J_0}$ whenever $j\leq J_0.$ For any such $n$, we then have $\sum_{j=1}^{J_0} c_j^{(n)}\|e_j^{(n)}\|^2>1-\epsilon,$ thus $\sum_{j=J_0+1}^{N_n}c_j^{(n)}\|e_j^{(n)}\|^2<\epsilon$ by Property (\ref{property_scale_proba}). Let $J_{\epsilon}=\max(\{N_n\}_{n=1}^K\cup \{J_0\}).$ Then, for any $n\in\mathbb{N},$ we have $$\sum_{J_{\epsilon}<j\leq N_n}c_j^{(n)}\|e_j^{(n)}\|^2\leq\begin{cases}
		\sum_{j=J_0+1}^{N_n}c_j^{(n)}\|e_j^{(n)}\|^2&<\epsilon\quad \text{ if $n>K$ }	\\
		0&<\epsilon\quad \text{ otherwise, }
		\end{cases}$$ completing the proof.
\end{proof}

\begin{prop}\label{proposition_null_origin}
	Given $O=(0)_{n\in\mathbb{N}}\in Q,$ $\HMU(\{O\})=0.$
\end{prop}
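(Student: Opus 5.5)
The plan is to bound $\HMU(\{O\})$ by $\HMU(V)$ for a suitable open neighbourhood $V$ of $O$, and then to bound $\HMU(V)$ through Proposition \ref{proposition_open_measure} by controlling $\mu_n(X_n^{-1}(V))$ uniformly in $n$. The neighbourhoods I will use are the basic cylinders
$$V_{J,\delta}=\{x\in Q\;|\;|\pi_j(x)|<\delta \text{ for all } j\leq J\},$$
which are open in the product topology and contain $O$.

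The key observation is a pointwise normalisation of the coordinates of $X_n$. For every $f\in\Omega_n$,
$$\sum_{j\in\mathbb{N}}|\pi_j(X_n(f))|^2=\sum_{j=1}^{N_n}c_j^{(n)}\frac{|\LL e_j^{(n)},f\RR|^2}{\mu_n(f)}=1,$$
by the definition of $\mu_n(f)$. So if $X_n(f)\in V_{J,\delta}$, the first $J$ coordinates contribute less than $J\delta^2$. Hence
$$\sum_{j>J}|\pi_j(X_n(f))|^2>1-J\delta^2.$$

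Write $W_n=X_n^{-1}(V_{J,\delta})$ and integrate this inequality over $W_n$ against $\mu_n$:
$$(1-J\delta^2)\,\mu_n(W_n)\leq\int_{\Omega_n}\sum_{J<j\leq N_n}|\pi_j\circ X_n|^2\,d\mu_n=\sum_{J<j\leq N_n}c_j^{(n)}\|U_n(e_j^{(n)})\|^2=\sum_{J<j\leq N_n}c_j^{(n)}\|e_j^{(n)}\|^2.$$
The last equality uses that $U_n$ is unitary (Proposition \ref{proposition_finite_isometry}).

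Now fix $\epsilon>0$. Take $J=J_\epsilon$ from Lemma \ref{lemma_uniform_series}, which makes the right-hand side less than $\epsilon$ for every $n$. Then choose $\delta>0$ with $J\delta^2\leq\frac12$. This gives $\mu_n(W_n)<2\epsilon$ for all $n$, and Proposition \ref{proposition_open_measure} yields
$$\HMU(\{O\})\leq\HMU(V_{J,\delta})\leq\liminf_{n\rightarrow\infty}\mu_n(W_n)\leq 2\epsilon.$$
Since $\epsilon$ is arbitrary, $\HMU(\{O\})=0$.

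The main point of the argument is the uniformity in $n$ of the tail bound, which is exactly what Lemma \ref{lemma_uniform_series} supplies. The remaining steps are routine.
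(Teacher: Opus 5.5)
Your proof is correct and is essentially the paper's argument: the same cylinder neighbourhoods $\bigcap_{j\le J}\pi_j^{-1}(D_\delta)$, Proposition~\ref{proposition_open_measure}, and Lemma~\ref{lemma_uniform_series} for the tail, with the head terms bounded by $J\delta^2\mu_n(W_n)$. Your normalisation $\sum_j|\pi_j(X_n(f))|^2=1$ is just the definition $\mu_n(f)=\sum_j c_j^{(n)}|\langle e_j^{(n)},f\rangle|^2$, which the paper uses directly. The only cosmetic difference is that you absorb the head term into the left-hand side to get $(1-J\delta^2)\mu_n(W_n)<\epsilon$, whereas the paper bounds it by $J/k^2$ and then lets $k\to\infty$.
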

\begin{proof}
		For $k,l\in\mathbb{N},$ let $E_{k,l}=\bigcap_{j=1}^l \pi_j^{-1}(D_{\frac{1}{k}})\subset Q,$ where $D_{\frac{1}{k}}$ is the open disk of radius $\frac{1}{k}$ in $\mathbb{K}$ centered at $0$. By definition of the product topology, we have that $E_{k,l}$ is an open subset of $Q$ containing $O$ for any $k,l$.  Thus, using proposition \ref{proposition_open_measure}, we always have $\HMU(\{O\})\leq \HMU(E_{k,l})\leq \liminf_{n\rightarrow\infty} \mu_n(X_n^{-1}(E_{k,l})).$
		
		Furthermore, upon inspection, we have that for any such $k,l\in \mathbb{N},$  $$X_n^{-1}(E_{k,l})=\bigcap_{j=1}^l (\pi_j\circ X_n)^{-1}(D_{\frac{1}{k}})=\bigcap_{j=1}^{\min(l,N_n)} U_n(e_j^{(n)})^{-1}(D_{\frac{1}{k\sqrt{c_j^{(n)}}}}),$$ and so $|\LL e_j^{(n)}, f\RR|^2 < \frac{\mu_n(f)}{k^2 c_j^{(n)}}$ whenever $f\in X_n^{-1}(E_{k,l})$ and $j\leq \min(l,N_n).$ 
		
		Let $\epsilon>0$, and per Lemma \ref{lemma_uniform_series} let $J_{\epsilon}\in\mathbb{N}$ such that $\sum_{J_{\epsilon}<j\leq N_n}c_j^{(n)}\|e_j^{(n)}\|^2<\epsilon$ for any $n\in \mathbb{N}.$ We have, for any $n$ such that $N_n>J_{\epsilon}$ (which is guaranteed for large enough $n$ by Property (\ref{property_scale_infinity}) of Definition \ref{definition_scale}) and any $k\in\mathbb{N}$:
		\begin{align*}
				\mu_n(X_n^{-1}(E_{k,J_{\epsilon}}))&=\sum_{j=1}^{N_n}c_j^{(n)}\sum_{f\in X_n^{-1}(E_{k,J_{\epsilon}})} |\LL e_j^{(n)}, f\RR|^2\\
				&\leq \sum_{j=1}^{J_{\epsilon}}c_j^{(n)}\sum_{f\in X_n^{-1}(E_{k,J_{\epsilon}})} |\LL e_j^{(n)}, f\RR|^2+\sum_{J_{\epsilon}<j\leq N_n}c_j^{(n)}\|e_j^{(n)}\|^2\\
				&\leq \epsilon+\sum_{j=1}^{J_{\epsilon}}c_j^{(n)}\sum_{f\in X_n^{-1}(E_{k,J_{\epsilon}})} |\LL e_j^{(n)}, f\RR|^2\\
				&\leq\epsilon+\sum_{j=1}^{J_{\epsilon}}c_j^{(n)}\sum_{f\in X_n^{-1}(E_{k,J_{\epsilon}})}\frac{1}{k^2c_j^{(n)}}\mu_n(f)\\
				&=\epsilon+\frac{J_{\epsilon}}{k^2}\mu_n(X_n^{-1}(E_{k,J_{\epsilon}}))\leq \epsilon+\frac{J_{\epsilon}}{k^2}. 
			\end{align*}  
		
		Thus, we have that $\HMU(\{O\})\leq\liminf_{n\rightarrow\infty} \mu_n(X_n^{-1}(E_{k,J_{\epsilon}}))\leq \epsilon+\frac{J_{\epsilon}}{k^2}.$ Since $k$ is arbitrary, we have that $\HMU(\{O\})\leq\epsilon.$ Since $\epsilon>0$ is arbitrary, we have that $\HMU(\{O\})=0.$
\end{proof}

\section{Induced Isometry}
As with the previous section, we assume that the fixed $A$-sampling-scale sequence $$S=(S_n)_{n\in\mathbb{N}}=(H_n,A_n,\Omega_n,(e_j^{(n)},c_j^{(n)})_{j=1}^{N_n})_{n\in\mathbb{N}}$$ is convergent, with the probability measure $\HMU$ on $\Borel(Q)$ defined the same way. The goal of this section is to introduce a suitable isometry $\HU:H\rightarrow L_2(Q,\HMU)$ and real-valued function $\Hm$ satisfying Theorem \ref{theorem_spectral_embedding}, completing the process.
 
We now define an important sequence of continuous functions on $Q$.

\begin{definition}
	For $j\in\mathbb{N},$ let $V_j: Q\rightarrow\mathbb{K}$ be given by  $V_j=\frac{1}{\sqrt{c_j}}\pi_j.$
\end{definition}
\begin{remark}
	It is clear that for each $j$, $V_j$ is continuous on $Q$. Furthermore, as $Q$ is compact and $\HMU$ is a probability measure, it is also clear that $V_j$ is square integrable. Finally, we note that for any $n\in\mathbb{N}$ and $j\leq N_n$, $V_j\circ X_n=\sqrt{\frac{c_j^{(n)}}{c_j}}U_n(e_j^{(n)}).$
\end{remark}
We can now define the natural isometry $\HU: H\rightarrow L_2(Q,\HMU)$ as well, which is done using the following proposition.

\begin{prop}\label{proposition_unique_isometry}
	There is a unique linear isometry $\HU: H\rightarrow L_2(Q,\HMU)$ such that for any $j\in\mathbb{N}$, $\HU(e_j)=V_j.$
\end{prop}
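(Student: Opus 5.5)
The plan is to show that the assignment $e_j\mapsto V_j$ preserves inner products, i.e.
\[
\LL V_j,V_k\RR_{L_2(Q,\HMU)}=\LL e_j,e_k\RR \qquad\text{for all } j,k\in\mathbb{N}.
\]
Everything else then follows by a standard extension argument. Suppose this identity holds. For any finite linear combination we get $\|\sum_j a_jV_j\|=\|\sum_j a_je_j\|$. Hence the map $\sum_j a_je_j\mapsto\sum_j a_jV_j$ is well defined on $\Span\{e_j\}$: if a combination of the $e_j$ vanishes, the corresponding combination of the $V_j$ has norm $0$. This map is also linear and isometric. By Property (\ref{property_scale_dense}), $\Span\{e_j\}$ is dense in $H$. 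A linear isometry into the complete space $L_2(Q,\HMU)$ therefore extends uniquely by continuity to a linear isometry $\HU$ on $H$. Uniqueness of $\HU$ holds because any continuous linear map is determined by its values on a dense-spanning set.

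To compute $\LL V_j,V_k\RR$, I will use that $V_j\overline{V_k}$ is a continuous function on $Q$. Proposition \ref{proposition_riesz_markov} then gives
\[
\int_Q V_j\overline{V_k}\,d\HMU=\lim_{n\to\infty}\int_{\Omega_n}(V_j\circ X_n)\overline{(V_k\circ X_n)}\,d\mu_n .
\]
For $n$ large we have $N_n\ge\max(j,k)$ by Property (\ref{property_scale_infinity}). By the remark after the definition of $V_j$, the integrand is then
\[
\sqrt{\frac{c_j^{(n)}c_k^{(n)}}{c_jc_k}}\;U_n(e_j^{(n)})\,\overline{U_n(e_k^{(n)})}.
\]
Since $U_n$ is unitary by Proposition \ref{proposition_finite_isometry}, the integral equals
\[
\sqrt{\frac{c_j^{(n)}c_k^{(n)}}{c_jc_k}}\;\LL e_j^{(n)},e_k^{(n)}\RR .
\]
By Property (\ref{property_scale_limits}), $c_j^{(n)}\to c_j>0$ and $e_j^{(n)}\to e_j$ in norm, and the inner product is jointly continuous. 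So this expression converges to $\LL e_j,e_k\RR$, which is the required identity.

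I expect no serious obstacle. The one point needing care is well-definedness: the $e_j$ need not be linearly independent, so one cannot define $\HU$ by declaring its values on a basis. This is exactly why I pass through the norm identity for finite combinations before extending. It also matters that the limit in Proposition \ref{proposition_riesz_markov} applies to $V_j\overline{V_k}$, and it does, since this product is continuous on $Q$. Finally, I must take $n$ large enough that $j,k\le N_n$, so that the remark's formula for $V_j\circ X_n$ applies rather than the zero branch of Definition \ref{definition_finite_embedding_cube}.
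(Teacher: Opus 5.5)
Your proposal is correct and follows essentially the same route as the paper. Like the paper, you compute $\LL V_j,V_k\RR$ as $\lim_{n}\int_{\Omega_n}(V_j\overline{V_k})\circ X_n\,d\mu_n$, rewrite the integrand through $U_n(e_j^{(n)})$ and the unitarity of $U_n$, pass to the limit using $c_j^{(n)}\to c_j$ and $e_j^{(n)}\to e_j$, and extend the norm-preserving map from $\Span(\{e_j\}_{j\in\mathbb{N}})$ to $H$ by density. Your explicit care about taking $n$ large enough that $j,k\le N_n$, and about well-definedness when the $e_j$ are linearly dependent, covers points the paper leaves implicit.
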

\begin{proof}
	The uniqueness of such an isometry holds by density of $\Span(\{e_j\}_{j\in\mathbb{N}})$, which is established by Property (\ref{property_scale_dense}) of Definition \ref{definition_scale}. Therefore, we only need to show the existence of such $\HU$.
	
	The most important part is proving that for any $j,l\in\mathbb{N},$ $\LL V_j,V_l\RR=\LL e_j, e_l\RR.$ We calculate that for such $j$ and $l$, since $V_j\overline{V_l}$ is continuous on $Q$:
	
	\begin{align*}
		\LL V_j, V_l\RR&=\int_{Q} V_j\overline{V_l}d\HMU=\lim_{n\rightarrow\infty} \int_{\Omega_n} \left(V_j\overline{V_l}\right)\circ X_n d\mu_n\\
		&=\lim_{n\rightarrow\infty}\sqrt{\frac{c_j^{(n)}}{c_j}\frac{c_l^{(n)} }{c_l}}\int_{\Omega_n}U_n\left(e_j^{(n)}\right)\overline{U_n\left(e_l^{(n)}\right)}d\mu_n\\
		&=\lim_{n\rightarrow\infty}\left\LL U_n\left(e_j^{(n)}\right), U_n\left(e_l^{(n)}\right)\right\RR=\lim_{n\rightarrow\infty}\LL e_j^{(n)}, e_l^{(n)}\RR\\
		&=\LL e_j, e_l\RR.
	\end{align*}
	
	The last equality, as well as the fourth, are due to Property (\ref{property_scale_limits}) of Definition \ref{definition_scale}. Thus, we have that for any $a_1,\dots, a_k \in \mathbb{K},$ $\|\sum_{j=1}^{k}a_j e_j\|^2=\|\sum_{j=1}^{k}a_j V_j\|^2.$ Therefore, there is a well-defined linear	 isometry $\HU:\Span(\{e_j\}_{j\in\mathbb{N}})\rightarrow L_2(Q,\HMU)$ such that $\HU(\sum_{j=1}^{k} a_j e_j)=\sum_{j=1}^{k} a_j V_j$, which extends to $\overline{\Span(\{e_j\}_{j\in\mathbb{N}})}=H$. 
\end{proof}

To complete the intended process, we now only need to define the suitable $\Hm$, which is done through the following theorem.
\begin{theorem}\label{theorem_multiplication_operator}
	There exists a unique real-valued $\Hm \in L_2(Q, \HMU)$ such that for any $x\in\Dom(A)$, $\HU(Ax)=\Hm\cdot \HU(x)$ as elements of $L_2(Q,\HMU).$
\end{theorem}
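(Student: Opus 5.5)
The plan is to build $\Hm$ as the Riesz representative of a limiting functional coming from the eigenvalue functions $\lambda_n$. Then I would verify the intertwining relation by testing against continuous functions, and get uniqueness from Proposition~\ref{proposition_null_origin}.

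\textbf{Step 1: an $L_2$ bound on $\lambda_n$.} We have
\[
\int_{\Omega_n}\lambda_n^2\,d\mu_n=\sum_{f\in\Omega_n}\lambda_n(f)^2\sum_{j=1}^{N_n}c_j^{(n)}|\LL e_j^{(n)},f\RR|^2=\sum_{j=1}^{N_n}c_j^{(n)}\|A_ne_j^{(n)}\|^2<C
\]
by Property~(\ref{property_compat_A}). Hence $\|\lambda_n\|_{L_2(\mu_n)}\le\sqrt C$ for all $n$.

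\textbf{Step 2: the limiting functional and $\Hm$.} For continuous $h:Q\to\mathbb{K}$ set $\phi_n(h)=\int_{\Omega_n}\lambda_n\,(h\circ X_n)\,d\mu_n$. By Cauchy--Schwarz,
\[
|\phi_n(h)|\le\sqrt C\Big(\int_{\Omega_n}|h|^2\circ X_n\,d\mu_n\Big)^{1/2}\longrightarrow\sqrt C\,\|h\|_{L_2(\HMU)}.
\]
In particular $|\phi_n(h)|\le\sqrt C\|h\|_\infty$. Since $C(Q)$ is separable, a diagonal argument gives a subsequence along which $\phi_n(h)$ converges for every continuous $h$. By Remark~\ref{remark_sampling_scale_subsequences} this subsequence is still a convergent $A$-sampling-scale sequence with the same $\HMU$, $V_j$ and $\HU$, so I pass to it.

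The limit $\phi$ then satisfies $|\phi(h)|\le\sqrt C\|h\|_{L_2(\HMU)}$. Since $C(Q)$ is dense in $L_2(Q,\HMU)$, $\phi$ extends to a bounded functional, and the Riesz representation theorem gives $\Hm\in L_2$ with $\phi(h)=\int_Q \Hm\, h\,d\HMU$ and $\|\Hm\|_2\le\sqrt C$. Because $\lambda_n$ is real, $\phi(\bar h)=\overline{\phi(h)}$, which forces $\Hm$ to be real-valued.

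\textbf{Step 3: a transfer lemma.} The lemma is: if $z_n\in H_n$ and $z_n\to z$ in $H$, then for every continuous $h$,
\[
\int_{\Omega_n}U_n(z_n)\,\overline{h\circ X_n}\,d\mu_n\to\LL\HU z,h\RR.
\]

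First suppose $z=y=\sum_{j\le k}a_je_j$. Put $y_n=\sum_{j\le k}a_je_j^{(n)}$, so that $U_n(y_n)=\sum_j a_j\sqrt{c_j/c_j^{(n)}}\,V_j\circ X_n$. Then
\[
\int_{\Omega_n}U_n(y_n)\,\overline{h\circ X_n}\,d\mu_n\to\int_Q\HU y\,\bar h\,d\HMU
\]
by convergence of $S$ and $c_j^{(n)}\to c_j$.

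In general, pick such a $y$ with $\|z-y\|<\epsilon$. The error is bounded by $\|h\|_\infty\|z_n-y_n\|$ on the finite side, because $U_n$ is unitary. It is bounded by $\|h\|_\infty\|z-y\|$ on the limit side, because $\HU$ is an isometry. Both errors are eventually $O(\epsilon)$.

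\textbf{Step 4: the intertwining relation.} Fix $x\in\Dom(A)$ and let $(x_n)$ be given by Property~(\ref{property_sampling_approx}). Applying Step 3 to $z_n=A_nx_n$ and using $U_n(A_nx_n)=\lambda_nU_n(x_n)$ (Proposition~\ref{proposition_finite_isometry}) gives
\[
\LL\HU(Ax),h\RR=\lim_n\int_{\Omega_n}\lambda_n\,U_n(x_n)\,\overline{h\circ X_n}\,d\mu_n .
\]

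Now approximate $x$ by $y$ in the span of the $e_j$, as in Step 3. With $y_n$ as there, $U_n(y_n)$ differs from $(\HU y)\circ X_n$ only through the factors $\sqrt{c_j/c_j^{(n)}}\to1$. The error from replacing $U_n(x_n)$ by $(\HU y)\circ X_n$ is controlled by putting $\lambda_n$ on the side of $h$:
\[
\Big|\int_{\Omega_n}\lambda_n\,U_n(w)\,\overline{h\circ X_n}\,d\mu_n\Big|\le\|w\|\,\sqrt C\,\|h\|_\infty .
\]
This is the key point, since $\lambda_n$ itself is unbounded, whereas Step 1 gives the uniform $L_2$ bound. The main term $\int\lambda_n\,\big((\HU y)\bar h\big)\circ X_n\,d\mu_n$ tends to $\phi\big((\HU y)\bar h\big)=\int\Hm\,\HU y\,\bar h\,d\HMU$. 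Finally,
\[
\Big|\int_Q\Hm\,(\HU x-\HU y)\,\bar h\,d\HMU\Big|\le\sqrt C\,\|h\|_\infty\,\|x-y\|.
\]
Letting $\epsilon\to0$ gives $\int\big(\HU(Ax)-\Hm\,\HU x\big)\bar h\,d\HMU=0$ for every continuous $h$. The function $\HU(Ax)-\Hm\,\HU x$ lies in $L_1(\HMU)$ and $\HMU$ is a finite Radon measure, so it vanishes a.e. Hence $\Hm\,\HU x=\HU(Ax)\in L_2$.

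\textbf{Step 5: uniqueness.} If $\Hm'$ also works, then $(\Hm-\Hm')\,\HU x=0$ a.e. for all $x\in\Dom(A)$. Take $x_k\in\Dom(A)$ with $x_k\to e_j$. Then $\HU x_k\to V_j$ in $L_2$, and along a subsequence a.e., so $(\Hm-\Hm')V_j=0$ a.e. for every $j$. The sets $\{V_j\neq0\}$ cover $Q\setminus\{O\}$, and $\HMU(\{O\})=0$ by Proposition~\ref{proposition_null_origin}. Therefore $\Hm=\Hm'$ a.e.

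The step I expect to be most delicate is Step 4. The unbounded weights $\lambda_n$ must never multiply an $L_2$-small error in $x$ without the uniform $L_2$ bound from Step 1 being available.
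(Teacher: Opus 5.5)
Your proposal is correct and follows the paper's proof essentially step for step. You obtain $\Hm$ as the Riesz representative of a weak* limit of $\Phi_n(\lambda_n)$ along a subsequence, which is the content of Proposition~\ref{proposition_L2_limit}. You check the intertwining relation against continuous test functions by approximating $x$ with finite combinations of the $e_j$ and using the uniform $L_2$ bound on $\lambda_n$, which is the content of Proposition~\ref{proposition_converging_isometry}, here inlined rather than routed through the $S$-limit formalism. Uniqueness comes, as in the paper, from $(\Hm-\Hm')V_j=0$ a.e.\ for every $j$ together with $\HMU(\{O\})=0$. The differences are minor: you identify $\HU(Ax)$ with $\Hm\,\HU(x)$ directly, as an $L_1$ function annihilating $C(Q)$, instead of via uniqueness of $S$-limits; and in the uniqueness step you use an a.e.-convergent subsequence where the paper invokes closedness of the multiplication operator.
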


The rest of this section will be dedicated to proving this theorem. First, we need a few tools.

We want to be able to construct $\Hm$ as the "limit" of the eigenvalue functions $\lambda_n$, and do it in such a way that $\lambda_n\cdot U_n(x_n)$ "converges" to $\Hm\cdot\HU(x)$ whenever $x_n\in H_n$ converges to $x\in\Dom(A)$, with $U_n(x_n)$ itself "converging" to $\HU(x).$ To hope to have that, we need to define a proper convergence notion.

\begin{definition}\label{definition_L2_limit}
	For $n\in\mathbb{N},$ let $\Phi_n:L_2(\Omega_n,\mu_n)\rightarrow C(Q)'$, with $$(\Phi_n(g_n))(h)=\int_{\Omega_n} g_n(h\circ X_n)d\mu_n.$$ Furthermore, we say that a bounded sequence $(g_n\in L_2(\Omega_n,\mu_n))_{n\in\mathbb{N}}$ $S$-converges if $(\Phi_n(g_n))_{n\in\mathbb{N}}$ converges in $C(Q)'$ in the weak* topology. In other words,  $(g_n)_{n\in\mathbb{N}}$ $S$-converges if the $L_2$ norms are uniformly bounded, and if $\left(\int_{\Omega_n}g_n(h\circ X_n)d\mu_n\right)_{n\in\mathbb{N}}$ converges for any $h\in C(Q).$
\end{definition}
\begin{prop}\label{proposition_L2_limit}
	For any $S$-convergent sequence $(g_n\in L_2(\Omega_n,\mu_n))_{n\in\mathbb{N}}$, there exists a unique $g\in L_2(Q,\HMU)$ such that for any $h\in C(Q),$ $\int_{Q}hgd\HMU=\lim_{n\rightarrow\infty}(\Phi_n(g_n))(h).$ Furthermore, $\|g\|\leq\liminf_{n\rightarrow\infty}\|g_n\|$. Finally, if each $g_n$ is real-valued, so is $g$.
\end{prop}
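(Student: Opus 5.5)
The plan is to obtain $g$ from the Riesz representation theorem for the Hilbert space $L_2(Q,\HMU)$, after showing that the limit functional is bounded in the $L_2(Q,\HMU)$ norm on the dense subspace $C(Q)$.

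Let $M=\liminf_{n\rightarrow\infty}\|g_n\|$, which is finite because $S$-convergent sequences are bounded by definition. For $h\in C(Q)$ define $\ell(h)=\lim_{n\rightarrow\infty}(\Phi_n(g_n))(h)$; this limit exists by $S$-convergence, and $\ell$ is linear on $C(Q)$. By Cauchy--Schwarz in $L_2(\Omega_n,\mu_n)$,
\begin{align*}
|(\Phi_n(g_n))(h)|\leq \|g_n\|\left(\int_{\Omega_n}|h\circ X_n|^2d\mu_n\right)^{1/2}.
\end{align*}
Since $|h|^2$ is continuous on $Q$, Proposition \ref{proposition_riesz_markov} gives $\int_{\Omega_n}|h\circ X_n|^2d\mu_n\rightarrow\int_Q|h|^2d\HMU=\|h\|_{L_2(Q,\HMU)}^2$. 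Passing to a subsequence along which $\|g_n\|\rightarrow M$ (the limit $\ell(h)$ is the same along it), we get
\begin{align*}
|\ell(h)|\leq M\|h\|_{L_2(Q,\HMU)}.
\end{align*}
This step, bounding the functional by the $L_2(\HMU)$ norm rather than the sup norm, is the crux. It is also where the two convergences must be combined carefully through the liminf subsequence.

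Next I use that $C(Q)$ is dense in $L_2(Q,\HMU)$. This holds because $\HMU$ is a finite Borel measure on a compact metric space, hence Radon, so continuous functions are dense. Therefore $\ell$ extends uniquely to a bounded linear functional on $L_2(Q,\HMU)$ of norm at most $M$.

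By the Riesz representation theorem for Hilbert spaces, there is a unique $\tilde g$ with $\ell(h)=\LL h,\tilde g\RR=\int_Q h\overline{\tilde g}\,d\HMU$ and $\|\tilde g\|\leq M$. Set $g=\overline{\tilde g}$, so that $\int_Q hg\,d\HMU=\ell(h)$ and $\|g\|\leq\liminf\|g_n\|$.

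For uniqueness, suppose two such $g$ exist. Their difference integrates to zero against every $h\in C(Q)$. By density it is then orthogonal to all of $L_2(Q,\HMU)$ after conjugation, and so it vanishes.

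For realness, assume every $g_n$ is real-valued. Then for real-valued $h$ each $(\Phi_n(g_n))(h)$ is real, so $\int_Q hg\,d\HMU\in\mathbb{R}$. Hence $\int_Q h\,\mathrm{Im}(g)\,d\HMU=0$ for all real continuous $h$. By density of real continuous functions in real $L_2$, this gives $\mathrm{Im}(g)=0$ almost everywhere. When $\mathbb{K}=\mathbb{R}$ there is nothing to prove.
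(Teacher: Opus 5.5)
Your proposal is correct and follows essentially the same route as the paper. Both arguments use a Cauchy--Schwarz bound giving $|\ell(h)|\leq M\|h\|_{L_2(Q,\hat{\mu})}$, extend by density of $C(Q)$, and then apply the Hilbert-space Riesz representation followed by complex conjugation. The differences are cosmetic: you make explicit the passage to a subsequence realizing the $\liminf$, which the paper leaves implicit, and for realness you test $\operatorname{Im}(g)$ against real continuous functions, whereas the paper uses conjugate symmetry of the inner product to get $g=\overline{g}$.
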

\begin{proof}
	Let $\iota:C(Q)\rightarrow L_2(Q,\HMU)$ be the natural inclusion, which has dense image. Furthermore, let $M=\liminf_{n\rightarrow\infty}\|g_n\|\in\mathbb{R}.$ We want to define $\Phi:\iota(C(Q))\rightarrow\mathbb{K}$ with $\Phi(\iota(h))=\lim_{n\rightarrow\infty} (\Phi_n(g_n))(h).$ To show that such a definition is possible, we note, for any $h\in C(Q),$
	\begin{align*}
		\left|\int_{\Omega_n}g_n(h\circ X_n)d\mu_n\right|^2\leq\int_{\Omega_n}|g_n|^2d\mu_n \int_{\Omega_n}|h\circ X_n|^2d\mu_n= \|g_n\|^2 \int_{\Omega_n}|h|^2\circ X_nd\mu_n.
	\end{align*}
	Therefore, $|\lim_{n\rightarrow\infty}(\Phi_n(g_n))(h)|^2\leq M^2\int_{Q}|h|^2d\HMU=M\|\iota(h)\|^2,$ and so we have that $\lim_{n\rightarrow\infty}(\Phi_n(g_n))(h)=0$ whenever $\iota(h)=0.$ From that, we conclude that $\Phi$ is indeed a well-defined bounded linear relation on $\iota(C(Q))$ with $\|\Phi\|\leq M.$ Since $\iota(C(Q))$ is dense in $L_2(Q,\HMU),$ Riesz representation theorem for Hilbert space ensures that there is a unique $g\in L_2(Q,\HMU)$ such that for any $h\in C(Q),$ $\Phi(\iota(h))=\LL \iota(h), \overline{g}\RR$, thus $$\lim_{n\rightarrow\infty}(\Phi_n(g_n))(h)=\int_{Q}h gd\HMU.$$ The theorem further ensures that $\|g\|=\|\overline{g}\|=\|\Phi\|\leq M.$
	
	Finally, if each $g_n$ is real valued, then for any $h\in C(Q)$, we have $\overline{h}$ is also continuous, and
	\begin{align*}
		\LL \iota(h), \overline{g}\RR&= \overline{\LL \iota(\overline{h}), g\RR}=\lim_{n\rightarrow\infty}\overline{\int_{\Omega_n}g_n (\overline{h}\circ X_n)d\mu_n}=\lim_{n\rightarrow\infty}(\Phi_n(g_n))(h)=\LL \iota(h),g\RR.
	\end{align*}
	Thus, by density of $\iota(C(Q))$, $g=\Re(g)$ is real-valued.
\end{proof}
\begin{definition}
	If $(g_n\in L_2(\Omega_n,\mu_n))_{n\in\mathbb{N}}$ $S$-converges, we say that $\lim_{n\rightarrow\infty}^{(S)} g_n \in L_2(Q,\HMU)$ is the unique element given by the previous proposition.
\end{definition}
\begin{remark}\label{remark_continuous_convergence}
	By definition of convergence of $A$-sampling-scale sequence, we know that $\lim^{(S)}_{n\rightarrow\infty}\mathbf{1}_{\Omega_n}=1_{Q}.$ In fact, for any $h_0\in C(Q)$, $h_0=\lim^{(S)}_{n\rightarrow\infty}h_0\circ X_n,$ noting that $\int_{\Omega_n}|h_0\circ X_n|^2d\mu_n\rightarrow \int_{Q}|h_0|^2d\HMU,$ ensuring the sequence $(h_0\circ X_n)_{n\in\mathbb{N}}$ is bounded.
\end{remark}

We can now state our suitable condition, that if $(x_n\in H_n)_{n\in\mathbb{N}}$ is a sequence for which $x=\lim_{n\rightarrow\infty} x_n$ in $H$, then $\HU(x)=\lim^{(S)}_{n\rightarrow\infty} U_n(x_n).$  Since we are going to work with $\lambda_n\cdot U_n(x_n),$ we will say a bit more.

\begin{prop}\label{proposition_converging_isometry}
	Suppose $(g_n\in L_2(\Omega_n,\mu_n))_{n\in\mathbb{N}}$ is a $S$-convergent sequence with $g=\lim^{(S)}_{n\rightarrow\infty}g_n,$ and that $(x_n\in H_n)_{n\in\mathbb{N}}$ converges to $x\in H.$ Suppose further that $(g_n\cdot U_n(x_n))_{n\in\mathbb{N}}$ is bounded (as respective elements of $L_2(\Omega_n,\mu_n)$). Then, we have $g\cdot\HU(x)\in L_2(Q,\HMU),$ and $g\cdot \HU(x)=\lim^{(S)}_{n\rightarrow\infty} g_n\cdot U_n(x_n).$ 
\end{prop}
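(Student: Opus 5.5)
The strategy is to reduce to finite linear combinations of the $e_j$, where everything is explicit in terms of the continuous functions $V_j$, and then pass to general $x$ by an approximation argument. Because $g$ need not be bounded, that approximation has to be controlled in an $L_1$-type sense rather than in $L_2$. Throughout, let $M$ be a common bound for $\|g_n\|$ and for $\|g_n\cdot U_n(x_n)\|$.

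\emph{Step 1: finite combinations.} Let $y=\sum_{j=1}^k a_je_j$ and put $y_n=\sum_{j=1}^k a_je_j^{(n)}\in H_n$ for $n$ with $N_n\geq k$. By Property (\ref{property_scale_limits}), $y_n\to y$. By the remark following the definition of $V_j$,
$$U_n(y_n)=\sum_{j=1}^k a_j\sqrt{\tfrac{c_j}{c_j^{(n)}}}\,(V_j\circ X_n).$$
For $h\in C(Q)$, the function $hV_j$ is continuous. Hence $\int_{\Omega_n}g_nU_n(y_n)(h\circ X_n)\,d\mu_n$ is a finite sum of terms $a_j\sqrt{c_j/c_j^{(n)}}\,(\Phi_n(g_n))(hV_j)$. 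Each such term converges to $a_j\int_Q ghV_j\,d\HMU$ by Proposition \ref{proposition_L2_limit}, using $c_j^{(n)}\to c_j>0$. Since $|V_j|\leq c_j^{-1/2}$, the sequence $g_nU_n(y_n)$ is bounded in $L_2$. Therefore it $S$-converges, and its limit is $g\cdot\HU(y)$, which lies in $L_2(Q,\HMU)$ because $\HU(y)=\sum a_jV_j$ is bounded (Proposition \ref{proposition_unique_isometry}).

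\emph{Step 2: the error term.} For general $x$, fix $\delta>0$ and choose $y$ as above with $\|x-y\|<\delta$, using Property (\ref{property_scale_dense}). For $h\in C(Q)$, Cauchy--Schwarz and the unitarity of $U_n$ (Proposition \ref{proposition_finite_isometry}) give
$$\left|\int_{\Omega_n}g_nU_n(x_n-y_n)(h\circ X_n)\,d\mu_n\right|\leq\|h\|_\infty\,\|g_n\|\,\|x_n-y_n\|.$$
The $\limsup$ of the right-hand side is at most $\|h\|_\infty M\delta$. So the pairings $\int g_nU_n(x_n)(h\circ X_n)\,d\mu_n$ lie eventually within $\|h\|_\infty M\delta$ of a convergent sequence. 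Since $\delta$ is arbitrary, they are Cauchy and converge. Together with the boundedness hypothesis, this shows that $g_nU_n(x_n)$ $S$-converges; call its limit $G\in L_2(Q,\HMU)$.

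\emph{Step 3: identifying $G$ (the main obstacle).} The difficulty is that $g$ is only in $L_2$, so $g\cdot\HU(x)$ is a priori only in $L_1$, and we must show it equals $G$. Take $y_k$ as above with $y_k\to x$. From Steps 1 and 2,
$$\left|\int_Q hG\,d\HMU-\int_Q hg\HU(y_k)\,d\HMU\right|\leq\|h\|_\infty M\|x-y_k\|.$$
By Cauchy--Schwarz and the isometry property of $\HU$, $g\HU(y_k)\to g\HU(x)$ in $L_1(Q,\HMU)$. Letting $k\to\infty$ gives $\int_Q hG\,d\HMU=\int_Q hg\HU(x)\,d\HMU$ for every $h\in C(Q)$. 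Thus the finite signed (or complex) measures $G\,d\HMU$ and $g\HU(x)\,d\HMU$ agree on $C(Q)$. By the uniqueness part of Riesz--Markov on the compact metric space $Q$ they coincide, so $g\HU(x)=G$ $\HMU$-a.e. In particular $g\cdot\HU(x)\in L_2(Q,\HMU)$ and $g\cdot\HU(x)=\lim^{(S)}_{n\rightarrow\infty}g_n\cdot U_n(x_n)$.
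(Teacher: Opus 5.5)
Your proof is correct and follows essentially the same route as the paper. You approximate $x$ by $\sum_{j=1}^k a_je_j$, test the $S$-convergence of $(g_n)$ against the continuous functions $hV_j$, control the remainder by Cauchy--Schwarz and the unitarity of $U_n$, and identify the $S$-limit with $g\cdot\HU(x)$ via uniqueness of the measure with an $L_1$ density. The differences are cosmetic: you absorb the factors $\sqrt{c_j/c_j^{(n)}}$ into the coefficients and separate the Cauchy argument from the identification of the limit, whereas the paper runs a single $\epsilon$-estimate directly against $\int_Q gh\cdot\HU(x)\,d\HMU$.
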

\begin{proof}
	For this proof, since norms may present some ambiguity, we will use $\|\; \|_0$ to denote the $\sup$ norm on bounded functions, while $\|\; \|_{\mu}$ will denote the $L_2$ norm with respect to measure $\mu$.
	
	We first show that $\Phi_n(g_n\cdot U_n(x_n))$ weak* converges, with $$\lim_{n\rightarrow\infty} (\Phi_n(g_n\cdot U_n(x_n)))(h)=\lim_{n\rightarrow\infty}\int_{\Omega_n} (h\circ X_n)g_n\cdot U_n(x_n)d\mu_n=\int_{Q}hg\cdot\HU(x)d\HMU$$ for any $h\in C(Q)$. We now fix such $h$. Also, we fix $M\in\mathbb{R}$ such that $\|g_n\|_{\mu_n}\leq M$ for all $n$. We note that by Proposition \ref{proposition_L2_limit}, $\|g\|_{\HMU}\leq M$ as well.
	
	Let $\epsilon>0$.  By Property \ref{property_scale_dense} of Definition \ref{definition_scale}, let $\{a_j \}_{j=1}^k\subset\mathbb{K}$ with  $\left\|\sum_{j=1}^ka_je_j-x \right\|<\frac{\epsilon}{4M\|h\|_0+1}$. Then, let $N\in\mathbb{N}$ such that for any $n>N$, $N_n>k$ and:
	\begin{itemize}
		\item $\left|\int_{Q}g\left(h\sum_{j=1}^ka_jV_j\right)d\HMU-\int_{\Omega_n}g_n\cdot\left(h\sum_{j=1}^ka_jV_j\right)\circ X_nd\mu_n \right|<\frac{\epsilon}{4}$ ;
		\item $\left\|\sum_{j=1}^ka_je_j^{(n)}-x_n \right\|<\frac{\epsilon}{4M\|h\|_0+1}$;
		\item for each $1\leq j\leq k$, $|a_j|\left|\sqrt{\frac{c_j^{(n)}}{c_j}}-1\right|\|e_j^{(n)}\|<\frac{\epsilon}{4kM\|h\|_0+1}$.
	\end{itemize}
	Then, we have, for any such $n>N$:
	
	\begin{align*}
		&\left|\int_{Q}gh\cdot\HU(x)d\HMU-\int_{\Omega_n}g_n\left(h\circ X_n\right)\cdot U_n(x_n)d\mu_n \right|\\
		&\leq\left|\int_{Q}gh\cdot\left(\HU(x)-\sum_{j=1}^ka_jV_j\right)d\HMU \right|+\left|\int_{Q}gh\sum_{j=1}^ka_jV_jd\HMU-\int_{\Omega_n}g_n\left(h\circ X_n\right)\cdot U_n(x_n)d\mu_n\right|\\
		&\leq \int_{Q} |gh|\left|\HU(x)-\sum_{j=1}^ka_jV_j \right| d\HMU+\left|\int_{Q}gh\sum_{j=1}^ka_jV_jd\HMU-\int_{\Omega_n}g_n\left(h\circ X_n\right)\cdot U_n(x_n)d\mu_n\right|\\
		&\leq \|h\|_0\|g\|_{\HMU}\left\|\HU\left(x-\sum_{j=1}^ka_je_j\right)\right\|_{\HMU}+\left|\int_{Q}gh\sum_{j=1}^ka_jV_jd\HMU-\int_{\Omega_n}g_n\left(h\circ X_n\right)\cdot U_n(x_n)d\mu_n\right|\\
		&<\frac{\epsilon}{4}+\left|\int_{Q}gh\sum_{j=1}^ka_jV_jd\HMU-\int_{\Omega_n}g_n\left(h\sum_{j=1}^ka_jV_j\right)\circ X_nd\mu_n\right|\\
		&\qquad\qquad+\left|\int_{\Omega_n}\left(g_n\left(h\sum_{j=1}^ka_jV_j\right)\circ X_n-g_n(h\circ X_n)\cdot U_n(x_n) \right)d\mu_n\right|\\
		&<\frac{2\epsilon}{4}+\left|\int_{\Omega_n}g_n(h\circ X_n)\left(\sum_{j=1}^k a_jV_j\circ X_n-U_n(x_n)\right)d\mu_n\right|\\
		&\leq\frac{\epsilon}{2}+\|h\|_0\|g_n\|_{\mu_n}\left\|\sum_{j=1}^k a_jV_j\circ X_n-U_n(x_n)\right\|_{\mu_n}\\
		&\leq\frac{\epsilon}{2}+M\|h\|_0\left(\left\|\sum_{j=1}^ka_j\left(V_j\circ X_n-U_n(e_j^{(n)})\right)\right\|_{\mu_n}+\left\|U_n\left(\sum_{j=1}^ka_je_j^{(n)}-x_n\right)\right\|_{\mu_n}\right)\\
		&=\frac{\epsilon}{2}+M\|h\|_0\left(\left\|\sum_{j=1}^ka_j\left(\sqrt{\frac{c_j^{(n)}}{c_j}}-1\right)U_n(e_j^{(n)})\right\|_{\mu_n}+\left\|\sum_{j=1}^ka_je_j^{(n)}-x_n \right\|\right)\\
		&<\frac{3\epsilon}{4}+M\|h\|_0\sum_{j=1}^k|a_j|\left|\sqrt{\frac{c_j^{(n)}}{c_j}}-1 \right|\|e_j^{(n)}\|\\
		&<\epsilon.
	\end{align*}
	
	Since $\epsilon$ is arbitrary, we have that \[\int_{Q}gh\cdot\HU(x)d\HMU=\lim_{n\rightarrow\infty}\int_{\Omega_n}g_n\left(h\circ X_n\right)\cdot U_n(x_n)d\mu_n,\] as claimed.
	
	Since $(g_n\cdot U_n(x_n))$ is bounded by hypothesis, and since we just established that $\Phi_n(g_n\cdot U_n(x_n))$ is weak* convergent, we have by definition that $g_n\cdot U_n(x_n)$ is $S$-convergent, and Proposition \ref{proposition_L2_limit} establishes unique $L_2(Q,\HMU)$ $S$-limit, which we can call $g'$. 
	
	Cauchy-Schwarz inequality shows that $g\cdot \HU(x)\in L_1(Q,\HMU).$ To show that $g\cdot \HU(x)\in L_2(Q,\HMU)$ and $g\cdot \HU(x)=\liminf_{n\rightarrow\infty} g_n\cdot U_n(x_n)$, it is thus sufficient to show that $g\cdot \HU(x)=g'$ in $L_1(Q,\HMU).$
	
	We know that for any $h\in C(Q),$ $\int_{Q}hg'd\HMU=\int_{Q}hg\cdot\HU(x)d\HMU.$ Thus, if we define $\hat{\nu}$ as the $\mathbb{K}$-signed measure on $\Borel(Q)$ with $d\hat{\nu}=(g'-g\cdot\HU(x))d\HMU$, we know that $\int_{Q}h d\hat{\nu}=0$ for all $h\in C(Q).$ Thus, the total variation of $\hat{\nu}$ is $0$, and $g'-g\cdot\HU(x)=0$, concluding the proof.
\end{proof}

We have  all the required tools to prove the theorem. \ref{theorem_multiplication_operator}. 

\begin{proof}[Proof of Theorem \ref{theorem_multiplication_operator}]
	We want to show that there exists a unique real-valued $\Hm\in L_2(Q,\HMU)$ such that for any $x\in\Dom(A)$, $\HU(Ax)=\Hm\cdot\HU(x).$ We fully show existence first. 
	
	We consider our previously defined eigenvalue function $\lambda_n:\Omega_n\rightarrow\mathbb{R}$ for $n\in\mathbb{N}$. We know that for any $n\in\mathbb{N}$, using Property (\ref{property_compat_A}) of $A$-sampling-scale sequences to find $C\in\mathbb{R}$:
	\begin{align*}
		\int_{\Omega_n}|\lambda_n|^2d\mu_n&=\sum_{f\in\Omega_n}|\lambda_n(f)|^2\mu_n(f)\\
		&=\sum_{f\in\Omega_n}|\lambda_n(f)|^2\sum_{j=1}^{N_n}c_j^{(n)}|\LL e_j^{(n)},f\RR|^2\\
		&=\sum_{f\in\Omega_n}\sum_{j=1}^{N_n}c_j^{(n)}|\LL e_j^{(n)},\lambda_n(f)f\RR|\\
		&=\sum_{f\in\Omega_n}\sum_{j=1}^{N_n}c_j^{(n)}|\LL e_j^{(n)},A_nf\RR|\\
		&=\sum_{j=1}^{N_n}c_j^{(n)}\sum_{f\in\Omega_n}|\LL A_n e_j^{(n)}, f\RR|^2\\
		&=\sum_{j=1}^{N_n}c_j^{(n)}\|A_ne_j^{(n)}\|^2\leq C.
	\end{align*}
	
	We cannot quite use Proposition \ref{proposition_converging_isometry}, as we have not shown that $\Phi_n(\lambda_n)$ weak* converges. While it holds (see the following Remark \ref{remark_convergent_eigenvalue}), we will not use it here. Instead, we note that $\|\Phi_n(\lambda_n)\|^2\leq\int_{\Omega_n}|\lambda_n|^2d\mu_n\leq C$. By using Banach-Alaoglu again, there exist a weak* convergence subsequence $(\Phi_{n_m}(\lambda_{n_m}))_{n\in\mathbb{N}}$, which we fix.
	
	We consider the $A$-sampling scale subsequence $S'=(S_{n_m})_{n\in\mathbb{N}}$, which is also convergent, and for which $\int_{Q} hd\HMU=\lim_{m\rightarrow\infty}\int_{\Omega_{n_m}}h\circ X_{m_n}d\mu_{n_m}$ holds for any $h\in C(Q)$. We also note that since $\lim_{m\rightarrow\infty}e_j^{(n_m)}=e_j,$ the isometry $\HU$ defined by Proposition \ref{proposition_unique_isometry} is the same for the subsequence.  With respect to it, $(\lambda_{n_m})_{m\in\mathbb{N}}$ is now a $S'$-convergent sequence. Let $\hat{m}\in L_2(Q,\HMU)$ be its limit, which is real-valued by Proposition \ref{proposition_L2_limit}. 
	
	We show that for any $x\in\Dom(A)$, $\Hm\cdot\HU(x)=\HU(Ax).$ Let $(x_n\in H_n)_{n\in\mathbb{N}}$ be a sequence satisfying Property (\ref{property_sampling_approx}) of Definition \ref{definition_sampling}. We then note that in $L_2(\Omega_{n_m},\mu_{n_m}),$ $\|\lambda_{n_m}\cdot U_{n_m}(x_{n_m})\|=\|U_{m_n}(A_{n_m}x_{m_n})\|=\|A_{n_m}x_{n_m}\|,$ which converges and thus is bounded on $m$. Therefore, we can apply Proposition \ref{proposition_converging_isometry} on both $\Hm=\lim^{(S')}_{m\rightarrow\infty} \lambda_{n_m}$ and $\mathbf{1}_Q=\lim^{(S')}_{m\rightarrow\infty}\mathbf{1}_{\Omega_{n_m}},$ so that $\Hm\cdot \HU(x)\in L_2(Q,\HMU),$ and $$\Hm\cdot\HU(x)={\lim}^{(S')}_{m\rightarrow\infty} \lambda_{n_m}\cdot U_{n_m}(x_{n_m})={\lim}^{(S')}_{m\rightarrow\infty}U_{n_m}(A_{n_m}x_{n_m})=\HU(Ax). $$ This concludes the proof of existence of $\Hm$.
	
	Suppose $\Hm_1$ and $\Hm_2$ are two real-valued measurable functions satisfying the Theorem, and let $T$ be the multiplication operator on $L_2(Q,\HMU)$ induced by $\Hm_1-\Hm_2$. Then, for any $x\in\Dom(A),$  we have $T(\HU(x))=\HU(Ax)-\HU(Ax)=0$. Since $T$ is self-adjoint thus closed, since $\Dom(A)$ is dense in $H$ and since $\HU$ is an isometry, we have that for any $x\in H,$ $\HU(x)\in\Dom(T)$ and $T\HU(x)=0.$ In particular, for any $j\in\mathbb{N},$ since $\pi_j=\HU(\sqrt{c_j}e_j),$ $\pi_j\in\Dom(T)$ and $0=T\pi_j=(\Hm_1-\Hm_2)\cdot\pi_j.$  
	
	For $K=(\Hm_1-\Hm_2)^{-1}(0),$ and $C_j=\pi_j^{-1}(0),$ we thus have $\HMU(K\cup C_j)=1$ for any $j\in\mathbb{N}.$ Thus, $1=\HMU(\bigcap_{j\in\mathbb{N}} K\cup C_j)=\HMU(K\cup\bigcap_{j\in\mathbb{N}}C_j)=\HMU(K\cup\{O\}),$ where $O$ is defined as per Proposition \ref{proposition_null_origin}, and as such $\HMU(\{O\})=0$. Thus, $\HMU(K)=1,$ and $\Hm_1$ and $\Hm_2$ are equal $\HMU$-almost everywhere, and so share the same $L_2$ class.
\end{proof}
\begin{remark}\label{remark_convergent_eigenvalue}
	Since we showed that $(\Phi_n(\lambda_n))_{n\in\mathbb{N}}$ is bounded in $C(Q)',$ every subsequence $(\Phi_{n_m}(\lambda_{n_m}))_{m\in\mathbb{N}}$ has a weak* convergent subsequence $(\Phi_{n_{m_k}}(\lambda_{n_{m_k}}))_{k\in\mathbb{N}}$, which induces a $(S_{n_{m_k}})_{k\in\mathbb{N}}$-limit $\Hm'$, so that $\HU(Ax)=\Hm'\HU(x)$ holds for any $x\in\Dom(A)$ as shown in the proof. But by the uniqueness property shown, $\Hm'=\Hm$, and so the weak* limit of $(\Phi_{n_{m_k}}(\lambda_{n_{m_k}}))_{k\in\mathbb{N}}$ is given by $h\rightarrow\int_{Q}h\Hm d\HMU$ on $h\in C(Q).$ Thus, the whole sequence $(\Phi_n(\lambda_n))_{n\in\mathbb{N}}$ must weak* converge to the same limit, and so $(\lambda_n)_{n\in\mathbb{N}}$ is $S$-convergent with $\lim_{n\rightarrow\infty}^{(S)} \lambda_n = \Hm.$ It should possible to show directly, with the idea that $A_n e_j^{(n)}$ weakly converges in $H$, and that for any continuous $h$ on $Q$, $U_n^{-1}((h\circ X_n)\cdot U_n (x_n))$ norm-converges to $U^*(h\cdot \HU(x))$. 
\end{remark}

\section{Example: Shift Operator}\label{section_shift}

In the next sections, we study specific operators, with the intentions of using the process to extract explicit descriptions of resulting spaces. We start with the shift operator, because of its richness despite its simplicity. In this case, the natural spectral embedding is already  widely known to be given by the Fourier series on $L_2([0,1])$. Here, we find that our process precisely, and naturally outputs this embedding, up to measure-preserving mapping.

Let $\mathbb{K}=\mathbb{C},$ $H=\ell_2(\mathbb{Z})$ with canonical Hilbert basis $(g_l)_{l\in\mathbb{Z}}$. Then, we consider $A=\frac{1}{2}(RS+LS)$, where $LS$ and $RS$ are the left-shift and right-shift operators on $\ell_2(\mathbb{Z}),$ so that $RSg_{l-1}=g_{l}=LSg_{l+1}$. We have that since $RS$ and $LS$  are unitary on $H$ and inverses of each other, $A$ is a bounded self-adjoint operator on $H$, with $\|A\|\leq 1$. We also define the sequence in $\mathbb{Z}$ $(l_j)_{j\in\mathbb{N}}$ given by $(l_1,l_2,l_3,l_4,\dots)=(0,1,-1,2,\dots).$


We first construct what we consider to be a suitable $A$-sampling sequence. When working with specific operators, such a sequence should allow for explicit description of the resulting objects. As such, it should use the relevant specific properties of the operator.

First, we define all relevant objects given $n\in\mathbb{N}$. Let $H_n=\Span(\{g_l\}_{l=-n}^n)$. Then, let $A_n=\frac{1}{2}(RS_n+LS_n),$ where $LS_n$ and $RS_n$ are respectively the left-shift and the right-shift operators modulo $2n+1$. Specifically, they are defined by linearity and $$RS_n g_l=\begin{cases}
	g_{-n} \quad \text{ if $l=n$ }\\
	g_{l+1} \quad \text{ otherwise }
\end{cases} \quad \text{ and }\quad LS_ng_l=\begin{cases}
g_n \quad \text{ if $l=-n$ }\\
g_{l-1}\quad \text{ otherwise. }
\end{cases}$$

Then, let $\Omega_n=\{f_k^{(n)}\}_{k=0}^{2n},$ where $$f_k^{(n)}=\frac{1}{\sqrt{2n+1}}\sum_{l=-n}^n e^{-2\pi i l \frac{k}{2n+1}}g_l.$$

Furthermore, let $N_n=2n+1$, $e_j^{(n)}=g_{l_j},$ and $c_j^{(n)}=\frac{1}{2^j(1-2^{N_n})}$ for $1\leq j\leq 2n+1.$ Finally, let $S_n=(H_n,A_n,\Omega_n, (e_j^{(n)},c_j^{(n)})_{j=1}^{N_n}).$ We then show the following.

\begin{prop}\label{proposition_shift_sampling_sequence}
	We have that $(S_n)_{n\in\mathbb{N}}$ forms a strong $A$-sampling-scale sequence. Furthermore, $\lambda_n(f_k^{(n)})=\cos\left(2\pi\frac{k}{2n+1} \right),$ $e_j=g_{l_j}$ and $c_j=\frac{1}{2^j}$ for any $n, j\in\mathbb{N}.$
\end{prop}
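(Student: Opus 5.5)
The plan is to check each property of Definitions \ref{definition_sampling}, \ref{definition_scale} and \ref{definition_sampling_scale_sequence} directly, using the explicit formulas.

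\textbf{Normalization of $c_j^{(n)}$.} Since $\sum_{j=1}^{N}2^{-j}=1-2^{-N}$, I will read the normalizing constant as $c_j^{(n)}=\frac{1}{2^j(1-2^{-N_n})}$. As printed, $1-2^{N_n}$ is negative, so this must be a typo. With that reading, Property (\ref{property_scale_proba}) holds because each $\|e_j^{(n)}\|=1$ and the $c_j^{(n)}$ sum to $1$. Properties (\ref{property_scale_elements}) and (\ref{property_scale_infinity}) are immediate.

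\textbf{Finite-dimensional structure.} I will first note that $j\mapsto l_j$ is a bijection $\mathbb{N}\to\mathbb{Z}$, and that it maps $\{1,\dots,2n+1\}$ exactly onto $\{-n,\dots,n\}$. Hence $e_j^{(n)}=g_{l_j}\in H_n$, which is Property (\ref{property_compat_H}). The limits are $e_j=g_{l_j}$ (constant in $n$) and $c_j=\lim_n c_j^{(n)}=2^{-j}>0$. This gives Property (\ref{property_scale_limits}), and $\sum_j c_j=1$ gives Property (\ref{property_scale_bias}). Since $\{e_j\}$ is the canonical Hilbert basis, Property (\ref{property_scale_dense}) holds.

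For the operators, $RS_n$ and $LS_n$ are mutually inverse permutation unitaries, so $A_n$ is symmetric. For the eigenvectors, I compute
\[
RS_nf_k^{(n)}=\tfrac{1}{\sqrt{2n+1}}\sum_{l}e^{-2\pi i lk/(2n+1)}g_{l+1 \bmod (2n+1)}.
\]
Reindexing, and using that $e^{-2\pi i (2n+1)k/(2n+1)}=1$ so the wrap-around term is consistent, gives $RS_nf_k^{(n)}=e^{2\pi i k/(2n+1)}f_k^{(n)}$. Similarly $LS_nf_k^{(n)}=e^{-2\pi i k/(2n+1)}f_k^{(n)}$, so $\lambda_n(f_k^{(n)})=\cos(2\pi k/(2n+1))$. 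Orthonormality of $\Omega_n$ is the standard discrete Fourier computation: $\LL f_k^{(n)},f_{k'}^{(n)}\RR$ is a geometric sum of $(2n+1)$-th roots of unity, which equals $\delta_{kk'}$. This gives Properties (\ref{property_sampling_finite_dim})--(\ref{property_sampling_eigenbasis}). Property (\ref{property_compat_Omega}) holds since $|\LL g_0,f_k^{(n)}\RR|=(2n+1)^{-1/2}\neq0$, and $g_0=e_1^{(n)}$. Property (\ref{property_compat_A}) holds with $C=2$, because $\|A_n\|\le1$ gives $\sum_j c_j^{(n)}\|A_ne_j^{(n)}\|^2\le1$.

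\textbf{Approximation and strong convergence.} For the approximation Property (\ref{property_sampling_approx}), which I expect to be the only step needing an estimate, take $x\in H$ and set $x_n=\Proj_{H_n}x\to x$. Then $A_nx_n-A\Proj_{H_n}x$ only involves the wrap-around and boundary terms. Concretely, it is $\frac12\bigl(x_{(n)}g_{-n}-x_{(n)}g_{n+1}+x_{(-n)}g_n-x_{(-n)}g_{-n-1}\bigr)$, where $x_{(l)}$ denotes the coordinates of $x$. Its norm is therefore at most $|x_{(n)}|+|x_{(-n)}|\to0$. Since $A$ is bounded, $A\Proj_{H_n}x\to Ax$, and so $A_nx_n\to Ax$. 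Finally, for strongness, once $n>|l_j|$ no wrap-around occurs, so $A_ne_j^{(n)}=\frac12(g_{l_j+1}+g_{l_j-1})=Ag_{l_j}$. The sequence is eventually constant, which gives Property (\ref{property_compat_strong}).
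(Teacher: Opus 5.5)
Your proof is correct and follows essentially the same route as the paper. Both check each property in turn using the discrete Fourier eigenbasis, take $x_n=\Proj_{H_n}x$ with a wrap-around boundary estimate for Property (\ref{property_sampling_approx}), and use the eventual equality $A_ne_j^{(n)}=Ae_j$ for strongness. Your choice of $\LL g_0,f_k^{(n)}\RR\neq 0$ for Property (\ref{property_compat_Omega}) is a concrete instance of the paper's remark that $\{e_j^{(n)}\}_{j\le N_n}$ is an orthonormal basis of $H_n$. You are also right that the printed $1-2^{N_n}$ is a typo for $1-2^{-N_n}$; the appendix confirms this by writing $\tilde{c}_j=\frac{1}{2^j(1-2^{-N_K})}=c_j^{(K)}$.
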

\begin{proof}
	Many parts of this are mostly calculations or quickly established. We go through the list of properties of Definitions \ref{definition_sampling}, \ref{definition_scale} and \ref{definition_sampling_scale_sequence}.
	
	\textbf{Property (\ref{property_sampling_finite_dim}):} As the span of a finite subset of $H$, $H_n$ is a finite-dimensional subspace of $H$ by definition. Of note, $(g_l)_{l=-n}^n$ is an orthonormal basis of $H_n$.
	
	\textbf{Property (\ref{property_sampling_symmetric_op}):} As permutations on an orthonormal basis of $H_n$ extended by linearity, $LS_n$ and $RS_n$ are both unitary maps on $H_n$, and they are inverse, thus adjoint, of each other. Therefore, $A_n$ must be a linear symmetric operator on $H_n.$ We also note as a result that $\|A_n\|\leq 1.$
	
	\textbf{Property (\ref{property_sampling_eigenbasis}):} We first establish that $\Omega_n$ is an orthonormal basis of $H_n$.For $0\leq k,m\leq 2n$, we have  $\LL f_k^{(n)}, f_m^{(n)} \RR=\frac{1}{2n+1}\sum_{l=-n}^n e^{2\pi i \frac{m-k}{2n+1}l},$ which $1$ if $m=k,$ or, through geometric summation, $0$ if $m\neq k.$ Thus, $\Omega_n\subset H_n$ is orthonormal, and since $|\Omega_n|=2n+1=\dim(H_n)$, it is an orthonormal basis of $H_n$. 
	
	To simplify the next calculation, let $\phi$ be the permutation on $\{l\}_{l=-n}^n$ so that $RS_n g_l=g_{\phi(l)}$ holds on $g_l\in H_n$, noting $\phi(l)\equiv l+1$ modulo $2n+1$. We then calculate, for any $f_k^{(n)}\in\Omega_n,$
	\begin{align*}
		RS_n f_k^{(n)}&=\frac{1}{\sqrt{2n+1}}\sum_{l=-n}^n e^{-2\pi i l \frac{k}{2n+1}}g_{\phi(l)}=\frac{e^{2\pi i\frac{k}{2n+1}}}{\sqrt{2n+1}}\sum_{l=-n}^n e^{-2\pi i (l+1) \frac{k}{2n+1}}g_{\phi(l)}\\
		&=\frac{e^{2\pi i\frac{k}{2n+1}}}{\sqrt{2n+1}}\sum_{l=-n}^n e^{-2\pi i \phi(l) \frac{k}{2n+1}}g_{\phi(l)}=\frac{e^{2\pi i\frac{k}{2n+1}}}{\sqrt{2n+1}}\sum_{l=-n}^n e^{-2\pi i l \frac{k}{2n+1}}g_{l}\\
		&=e^{2\pi i\frac{k}{2n+1}}f_k^{(n)}.
	\end{align*}
	The same way, or using that $LS_n$ and $RS_n$ are inverses of each other, we find $LS_n f_k^{(n)}=e^{-2\pi i\frac{k}{2n+1}}f_k^{(n)}.$ Thus, we find that $A_n f_k^{(n)}= \cos\left(2\pi\frac{k}{2n+1}\right).$ We conclude that $\Omega_n$ is an orthonormal eigenbsis of $A_n$, with $\lambda_n(f_k^{(n)})=\cos\left(2\pi\frac{k}{2n+1}\right).$
	
	\textbf{Property (\ref{property_sampling_approx}):} Let $x\in H.$ For $n\in\mathbb{N},$  let $x_n=\Proj_{H_n}x=\sum_{l=-n}^n \LL x, g_l\RR g_l.$ We establish that $x=\lim_{n\rightarrow\infty} x_n$ and $Ax=\lim_{n\rightarrow\infty} A_n x_n$, which is sufficient to show the property.
	
	The first limit is simply a consequence of Parseval's Identity. Furthermore, since $A$ is bounded on $H$, we have $Ax=\lim_{n\rightarrow\infty} Ax_n.$ Thus, we conclude by showing $\lim_{n\rightarrow\infty} Ax_n-A_nx_n=0$. Whenever $l\neq\pm n,$ we have $A g_l= A_n g_l.$ 
	
	Thus, we have $$\|Ax_n-A_nx_n\|=\|(A-A_n)(\LL x, g_n\RR g_n+\LL x, g_{-n}\RR g_{-n})\|\leq 2(|\LL x, g_n\RR|+|\LL x, g_{-n}\RR|).$$ 
	
	By Bessel's inequality, both $\LL x, g_n\RR$ and $\LL x, g_{-n}\RR$ converge to $0$, and so we conclude $\lim_{n\rightarrow\infty}\|Ax_n-A_nx_n\|=0.$
	
	\textbf{Property (\ref{property_scale_elements})} is trivial.
	
	\textbf{Property (\ref{property_scale_proba}):} We have $\sum_{j=1}^{N_n}c_j^{(n)}\|e_j^{(n)}\|^2=\frac{1}{1-2^{N_n}}\sum_{j=1}^{N_n}\frac{1}{2^j}=1.$
	
	\textbf{Property (\ref{property_scale_infinity})} is trivial.
	
	\textbf{Property (\ref{property_scale_limits}):} For $j\in \mathbb{N},$  we have $e_j^{(n)}=g_{l_j}$ and $c_j^{(n)}=\frac{1}{2^j(1-2^{2n+1})}$ whenever $N_n\geq j$. Thus, $\lim_{n\rightarrow\infty}e_j^{(n)}=g_{l_j}\neq 0$, and $\lim_{n\rightarrow\infty}c_j^{(n)}=\frac{1}{2^j}>0.$ We conclude that the property holds, and that $e_j=g_{l_j}$ and $c_j=\frac{1}{2^j}.$  
	
	\textbf{Property (\ref{property_scale_dense})} follows from $\{e_j\}_{j\in\mathbb{N}}=\{ g_l\}_{l\in \mathbb{Z}}.$
	
	\textbf{Property (\ref{property_scale_bias}):} We have $\sum_{j\in\mathbb{N}} c_j\|e_j\|^2=\sum_{j\in\mathbb{N}}\frac{1}{2^j}=1.$
	
	\textbf{Property (\ref{property_compat_H}) and (\ref{property_compat_Omega}):} We have  $\{l_j\}_{j=1}^{2n+1}=\{l\}_{l=-n}^n$, thus the set  $\{e_j^{(n)}\}_{j=1}^{N_n}=\{g_l\}_{l=-n}^n\subset H_n$ is an orthonormal basis of $H_n.$
	
	\textbf{Property (\ref{property_compat_A}):} Since we established earlier that $\|A_n\|\leq 1$, we have $$\sum_{j=1}^{N_n}c_j^{(n)}\|A_ne_j^{(n)}\|^2\leq\sum_{j=1}^{N_n}c_j^{(n)}\|e_j^{(n)}\|^2=1.$$
	
	\textbf{Propoerty (\ref{property_compat_strong}):} We know that whenever $|l|<n$, we have $g_l\in H_n$ and $A g_l=A_n g_l.$ In other words, for any $j\in\mathbb{N},$ we have that $A_n e_j^{(n)}=A e_j$ whenever  $n>|l_j|$ and $N_n\geq j$,, and so $\lim_{n\rightarrow\infty} A_ne_j^{(n)}=Ae_j\in H.$
\end{proof}

With this sampling-scale sequence, we will first state the final result.

\begin{definition}
	Let $\mu$ be the Lebesgue measure on $\Borel([0,1]).$ Furthermore, let $X:[0,1]\rightarrow Q$ be given by $$X(t)=\left(\frac{1}{\sqrt{2}^j} (e^{2\pi i t})^{l_j}\right)_{j\in\mathbb{N}}.$$
\end{definition}
\begin{remark}
	Since $l_j\in\mathbb{Z},$ it is clear that $\pi_j\circ X$ is continuous for any $j\in\mathbb{N},$ and so $X$ is continuous.
\end{remark}
\begin{theorem}\label{theorem_shift_fourier_series}
	The sequence $(S_n)_{n\in\mathbb{N}}$ is convergent, with the resulting $\HMU$ being given by the pushforward measure $\HMU=X_{\#}(\mu).$ Furthermore, if $U:H\rightarrow L_2([0,1],\mu)$ is given by $U(z)=\HU(z)\circ X,$ then $U$ is an isometry for which $(U(g_l))(t)=e^{2\pi i l t}$ for almost all $t\in [0,1].$ Finally, if $m:[0,1]\rightarrow\mathbb{R}$ is given by $m=\Hm\circ X,$ then $m(t)=\cos(2\pi t)$ almost everywhere in $[0,1]$, and for any $z\in H$, $$(U(Az))=m\cdot U(z).$$
\end{theorem}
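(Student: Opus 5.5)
The plan is to compute everything explicitly on the finite level and then pass to the limit with uniform continuity on the compact space $Q$. I first evaluate $X_n$. For $f_k^{(n)}\in\Omega_n$ and $l\in\{-n,\dots,n\}$ one has $\LL g_l, f_k^{(n)}\RR=\frac{1}{\sqrt{2n+1}}e^{2\pi i l\frac{k}{2n+1}}$. Hence
\[
\mu_n(f_k^{(n)})=\sum_{j=1}^{N_n}c_j^{(n)}\frac{1}{2n+1}=\frac{1}{2n+1},
\]
using Property (\ref{property_scale_proba}) and $\|e_j^{(n)}\|=1$. So $\mu_n$ is the uniform measure on $\Omega_n$, and $(U_n(g_l))(f_k^{(n)})=e^{2\pi i l t_k}$ with $t_k=\frac{k}{2n+1}$. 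Consequently $\pi_j(X_n(f_k^{(n)}))=\sqrt{c_j^{(n)}}\,e^{2\pi i l_j t_k}$ for $j\leq N_n$, and $0$ otherwise.

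I read the normalisation of $c_j^{(n)}$ as the intended one, with $c_j^{(n)}=2^{-j}/(1-2^{-N_n})$, which gives $c_j^{(n)}\to 2^{-j}$ uniformly in $j$. Comparing with $X(t_k)$, and using a product metric $d(p,q)=\sum_j 2^{-j}|\pi_j(p)-\pi_j(q)|$ on $Q$, I will get
\[
\sup_k d\bigl(X_n(f_k^{(n)}),X(t_k)\bigr)\leq \sum_{j\le N_n}2^{-j}\Bigl|\sqrt{c_j^{(n)}}-2^{-j/2}\Bigr|+\sum_{j>N_n}2^{-j}\cdot 2\longrightarrow 0 .
\]

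For convergence, fix $h\in C(Q)$; it is uniformly continuous since $Q$ is compact. Then
\[
\int_{\Omega_n}h\circ X_n\,d\mu_n=\frac{1}{2n+1}\sum_{k=0}^{2n}h(X_n(f_k^{(n)}))
\]
differs from the Riemann sum $\frac{1}{2n+1}\sum_k h(X(t_k))$ by an amount tending to $0$. Since $h\circ X$ is continuous on $[0,1]$, that Riemann sum converges to $\int_0^1 h\circ X\,d\mu=\int_Q h\,d(X_\#\mu)$. By the uniqueness in Proposition \ref{proposition_riesz_markov}, $S$ is convergent and $\HMU=X_\#\mu$.

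For the isometry, composition with $X$ is an isometry $L_2(Q,X_\#\mu)\to L_2([0,1],\mu)$, so $U$ is an isometry. Moreover $U(g_{l_j})=V_j\circ X=2^{j/2}\pi_j\circ X=e^{2\pi i l_j t}$, which covers every $l\in\mathbb{Z}$ because $j\mapsto l_j$ is a bijection onto $\mathbb{Z}$.

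For $\Hm$, I avoid subsequences by exhibiting a candidate directly. Set $h_0=\Re(2\pi_2)\in C(Q)$; since $l_2=1$ and $c_2=\frac14$, we have $h_0\circ X(t)=\cos(2\pi t)$. Then I check on the basis that $U(Ag_l)=\tfrac12\bigl(e^{2\pi i(l+1)t}+e^{2\pi i(l-1)t}\bigr)=\cos(2\pi t)\,U(g_l)$. Pulled back to $Q$ (via $\HMU=X_\#\mu$), this says $\HU(Ag_l)=h_0\cdot\HU(g_l)$ in $L_2(Q,\HMU)$. Both sides are bounded linear in $z$, because $A$ is bounded and $h_0$ is bounded, so the identity extends from the span to all $z\in H=\Dom(A)$. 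The uniqueness part of Theorem \ref{theorem_multiplication_operator} then gives $\Hm=h_0$ $\HMU$-a.e. Hence $m=\Hm\circ X=\cos(2\pi t)$ a.e., and $U(Az)=m\cdot U(z)$ for all $z\in H$.

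The main obstacle I expect is the uniform estimate $d(X_n(f_k^{(n)}),X(t_k))\to 0$, together with the care needed over the literal formula for $c_j^{(n)}$. The rest is bookkeeping once that estimate, the Riemann-sum argument, and the uniqueness statement are in place.
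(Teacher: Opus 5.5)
Your proof is correct, and it is a complete classical argument. The paper, by contrast, only sketches the classical route in its ``Key Steps'' and gives its full proof in Appendix~B through nonstandard analysis, using Corollary~\ref{corollary_nonstandard_invariance} and results imported from the earlier nonstandard work. You are also right that the printed $c_j^{(n)}=\frac{1}{2^j(1-2^{N_n})}$ is a typo: it is negative, and it would give $c_j=0$. The appendix uses $\frac{1}{2^j(1-2^{-N_n})}$, which is exactly what you use.

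For the identification of the measure, you follow the key-steps outline in substance. The paper uses pointwise convergence $X_n(f^{(n)}_{k_n})\to X(t)$, step functions and dominated convergence. You replace this with a uniform estimate in the product metric, uniform continuity of $h$ and Riemann sums, which is slightly cleaner.

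The genuine difference is in identifying $\hat m$.
\begin{itemize}
\item The paper writes $\lambda_n=\frac12\bigl(U_n(e_2^{(n)})+U_n(e_3^{(n)})\bigr)$. It then passes to the $S$-limit through Proposition~\ref{proposition_converging_isometry} and Remark~\ref{remark_convergent_eigenvalue}, the latter relying on subsequences plus uniqueness. This yields $\hat m=\frac12(V_2+V_3)$, which is real only $\hat\mu$-almost everywhere.
\item You exhibit the everywhere-real continuous candidate $h_0=\Re(2\pi_2)$. You check $\hat U(Ag_l)=h_0\cdot\hat U(g_l)$ on the Fourier basis via $\cos(2\pi t)e^{2\pi ilt}=\frac12(e^{2\pi i(l+1)t}+e^{2\pi i(l-1)t})$, extend by boundedness of $A$ and $h_0$, and invoke the uniqueness clause of Theorem~\ref{theorem_multiplication_operator}, whose ``real-valued'' hypothesis $h_0$ meets directly.
\end{itemize}

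Your route needs none of the $S$-convergence machinery. It does, however, rely on knowing the answer in advance and on $A$ being bounded. The paper's route displays the general mechanism, with $\hat m$ arising as the limit of the finite eigenvalue functions, and that is what is meant to carry over to operators with no obvious candidate.

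Incidentally, you can skip uniqueness altogether. Since $U(g_0)\equiv 1$, the intertwining relation applied to $x=g_0$ gives $m=m\cdot U(g_0)=U(Ag_0)=\cos(2\pi t)$ directly.
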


Thus, given that sampling-scale sequence, the resulting isometry associates a sequence with its Fourier series, and the resulting multiplication operator is the cosine function of unit period, up to a measure-preserving map.

We will not provide a detailed proof here, to avoid needless extra technical details. Instead, we will give the step-by-step idea, and the reader may complete the details at their convenience. 

A complete proof of Theorem \ref{theorem_shift_fourier_series} is detailed in Appendix \ref{appendix_shift}. However it uses the results of Appendix \ref{appendix_nonstandad}, which fundamentally uses nonstandard analysis. The reader will need familiarity with the domain and with the notions of \cite{nonez2024spectralequivalencesnonstandardsamplings} to read that part.

\begin{proof}[Key Steps for the Proof of Theorem \ref{theorem_shift_fourier_series}]
	In order, we can establish the following:
	\begin{itemize}
		\item First, we can calculate directly that for any $f_k^{(n)}\in \Omega_n$ and $j\leq N_n,$ $\LL e_j^{(n)}, f_k^{(n)} \RR= \frac{1}{\sqrt{2n+1}}e^{2\pi i l_j\frac{k}{2n+1}}.$ As such $\mu_n(V)=\frac{|V|}{2n+1}$ results as the uniform measure  Furthermore, for any $f_k^{(n)}\in\Omega_n$ and $j\leq N_n,$ we calculate that $(U_n(e_j^{(n)}))(f_k^{(n)})=e^{2\pi i l_j \frac{k}{2n+1}}$.
		\item We can directly establish that if $(f_{k_n}^{(n)})\in\mathbb{N}$ is any integer sequence for which $\lim_{n\rightarrow\infty} \frac{k_n}{2n+1}=t\in[0,1]$, then $\lim_{n\rightarrow\infty} X_n(f_{k_n}^{(n)})= X(t).$
		\item We have that for any $h\in C(Q),$ $$\int_{\Omega_n}(h\circ X_n)d\mu_n=\int_{[0,1]}\left(\sum_{k=0}^{2n}h(X_n(f_k^{(n)}))\mathbf{1}_{[\frac{k}{2n+1},\frac{k+1}{2n+1})}\right)d\mu.$$ Since the sequence of functions in that last integral is bounded and pointwise convergent to $h\circ X$ on $[0,1),$ we conclude from the Dominated Convergence Theorem that $(S_n)_{n\in\mathbb{N}}$ is convergent, with $\HMU=X_{\#}(\mu),$ the pushforward measure.
		\item We can then calculate that for any $j\in\mathbb{N}$ and $t\in [0,1]$, $(V_j\circ X)(t)=e^{2\pi i l_j\cdot t},$ and so for any $l\in\mathbb{Z}$, $(U(g_l))(t)=e^{2\pi i l t}.$ That $U$ is isometric is a direct consequence from $X$ being measure invariant.
		\item We calculate that $\lambda_n=\frac{1}{2}\left(U_n(e_2^{(n)})+U_n(e_3^{(n)})\right).$ Thus, it follows from Proposition \ref{proposition_converging_isometry} and Remark \ref{remark_convergent_eigenvalue} that $\Hm= \frac{1}{2}\left(V_2+V_3\right),$ and so we have $m(t)=(\Hm\circ X)(t)=\frac{1}{2}\left(e^{2\pi i l_2 t}+e^{2\pi i l_3 t}\right)=\cos(2\pi t)=$ for any $t\in [0,1].$
		\item Finally, that $U(Az)=m\cdot U(z)$ always hold directly follows from the definitions of $U$ and $m$ and the previous fact.
	\end{itemize}
\end{proof}

\section{Example: Differential operator on $\mathbb{R}$}\label{section_differential} Here, we consider another example, that of the derivative for compactly supported smooth functions on $\mathbb{R}$. We think it is maybe the most important one, as it opens the door to further research on some classes of differential operators. Just like with the shift operator, there is a natural known spectral embedding, which is given by the Fourier Transform. Here, we find that up to measure-preserving map and normalization, our process naturally outputs this embedding.

Let $\mathbb{K}=\mathbb{C}$ and $H=L_2(\mathbb{R},\mu),$ with $\mu$ being the Lebesgues measure on $\mathbb{R}.$ Furthermore, let $\Dom(A)=C_c^{\infty}(\mathbb{R}),$ and let $A=-i\frac{d}{dx}.$ It is known that $C_c^{\infty}(\mathbb{R})$ is dense in $H$, and it is quickly verified though integration by parts that $A$ is symmetric.

\subsection{Finding the sampling sequence}\label{subsection_differential_sampling} Our intuition is that to expect the most explicit and natural resulting objects, one should be meticulous in choosing  the sampling most "tailored" to the given operator. In the case of the shift operator, we only had to make minor modifications on $A_n$ when compared with the generic case described in Proposition \ref{proposition_sampling_existence}, so that $\Omega_n$ could have an explicit description. Here, we think that the better strategy is to construct the sampling with inspiration from numerical methods of the derivative.

Given $n\in\mathbb{N},$ and $l\in\mathbb{Z}$ such that $-n!^2\leq l <n!^2,$ let $s_l^{(n)}=[\frac{l}{n!},\frac{l+1}{n!}).$  Then, let $H_n=\Span(\{\mathbf{1}_{s_l^{(n)}}\}_{l=-n!^2}^{n!^2-1}).$ we note that $(\mathbf{1}_{s_{l}^{(n)}})_{l=-n!^2}^{n!^2-1}$ forms on orthogonal basis of $H_n$, all having the same norm $\|\mathbf{1}_{s_{l}^{(n)}}\|=\frac{1}{\sqrt{n!}}.$ Thus, we can define $LS_n$ and $RS_n$ on $H_n$ to be the left and right shift operators mod $2n!$ with respect to that basis. Specifically, like in the previous section, they are extending by linearity the relations $$RS_n \mathbf{1}_{s_{l}^{(n)}} =\begin{cases}
	 \mathbf{1}_{s_{-n!^2}^{(n)}} \quad \text{ if $l=n!^2-1$ }\\
	 \mathbf{1}_{s_{l+1}^{(n)}} \quad \text{ otherwise }
\end{cases} \quad \text{ and }\quad LS_n\mathbf{1}_{s_{l}^{(n)}} =\begin{cases}
	\mathbf{1}_{s_{n!^2-1}^{(n)}} \quad \text{ if $l=-n!^2$ }\\
	\mathbf{1}_{s_{l-1}^{(n)}} \quad \text{ otherwise. }
\end{cases}$$

Then, we define $A_n=-i\frac{LS_n-RS_n}{2/n!}$ on $H_n$. Finally, let $\Omega_n=\{f_k^{(n)}\}_{k=-n!^2}^{n!^2-1}$ with $$f_k^{(n)}=\frac{1}{\sqrt{2n!}}\sum_{l=-n!^2}^{n!^2-1}e^{2\pi i l\frac{k}{2n!^2}}\mathbf{1}_{s_l^{(n)}}.$$

We first prove the following:
\begin{prop}
	We have that $(H_n,A_n,\Omega_n)_{n\in\mathbb{N}}$ forms a sampling sequence for $A$. Furthermore, $\lambda_n(f_k^{(n)})=n!\sin\left(\pi\frac{k}{n!^2}\right).$ Finally, $LS_n$ and $RS_n$ are unitary and adjoint of each other, with $LS_n f_k^{(n)}=e^{2\pi i \frac{k}{2n!^2}}f_k^{(n)}$ for any $f_k^{(n)}\in\Omega_n.$
\end{prop}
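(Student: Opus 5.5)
The plan is to follow the shift-operator computation of Proposition \ref{proposition_shift_sampling_sequence} almost verbatim for the algebraic part, and to handle Property (\ref{property_sampling_approx}) through a central-difference Taylor estimate.

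\textbf{Step 1: $LS_n$ and $RS_n$, and symmetry of $A_n$.} Write $M=2n!^2$ for the number of intervals. The family $(\mathbf{1}_{s_l^{(n)}})_l$ is an orthogonal basis of $H_n$ whose elements all have norm $1/\sqrt{n!}$. Since $LS_n$ and $RS_n$ permute this basis, they are unitary, and they are inverse to each other. Hence $LS_n^*=RS_n$. It follows that
\[A_n^*=\left(-i\tfrac{n!}{2}(LS_n-RS_n)\right)^*=i\tfrac{n!}{2}(RS_n-LS_n)=A_n,\]
so $A_n$ is symmetric, giving Property (\ref{property_sampling_symmetric_op}). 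Property (\ref{property_sampling_finite_dim}) is immediate.

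\textbf{Step 2: $\Omega_n$ is an orthonormal eigenbasis.} For orthonormality, $\LL f_k^{(n)},f_m^{(n)}\RR=\frac{1}{2n!}\cdot\frac{1}{n!}\sum_l e^{2\pi i l\frac{k-m}{M}}$. This equals $1$ when $k=m$, since there are $M=2n!^2$ terms. When $k\neq m$ it vanishes by geometric summation over a full period. Since $|\Omega_n|=M=\dim H_n$, the set $\Omega_n$ is an orthonormal basis.

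For the eigenvalues, I would reindex exactly as in the shift example. Because $LS_n\mathbf{1}_{s_l^{(n)}}=\mathbf{1}_{s_{\psi(l)}^{(n)}}$ with $\psi(l)\equiv l-1 \pmod M$, and $e^{2\pi i l k/M}$ is $M$-periodic in $l$, we get $LS_n f_k^{(n)}=e^{2\pi i k/M}f_k^{(n)}$ and $RS_n f_k^{(n)}=e^{-2\pi i k/M}f_k^{(n)}$. Therefore
\[A_nf_k^{(n)}=-i\tfrac{n!}{2}\left(2i\sin\left(\tfrac{2\pi k}{M}\right)\right)f_k^{(n)}=n!\sin\left(\pi\tfrac{k}{n!^2}\right)f_k^{(n)}.\]
This gives Property (\ref{property_sampling_eigenbasis}), the formula for $\lambda_n$, and the claimed action of $LS_n$.

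\textbf{Step 3: the approximation property.} Let $\varphi\in C_c^\infty(\mathbb{R})$ with support in $[-R,R]$. Set $x_n=\sum_l \varphi(l/n!)\mathbf{1}_{s_l^{(n)}}$. For $n$ large we have $n!>R+1$, so the relevant indices lie far from $\pm n!^2$ and no wraparound occurs. Then $x_n-\varphi$ is supported in $[-R-1,R+1]$ and bounded by $\|\varphi'\|_\infty/n!$, so $x_n\to\varphi$ in $L_2$.

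For the operator, the coefficient of $LS_nx_n$ at $\mathbf{1}_{s_l^{(n)}}$ is $\varphi((l+1)/n!)$, and that of $RS_nx_n$ is $\varphi((l-1)/n!)$. Hence
\[A_nx_n=-i\sum_l\frac{\varphi(\frac{l+1}{n!})-\varphi(\frac{l-1}{n!})}{2/n!}\,\mathbf{1}_{s_l^{(n)}}.\]
By Taylor's theorem the central difference differs from $\varphi'(l/n!)$ by at most $\|\varphi''\|_\infty/n!$. Also $|\varphi'(t)-\varphi'(l/n!)|\leq\|\varphi''\|_\infty/n!$ for $t\in s_l^{(n)}$. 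So $A_nx_n+i\varphi'$ is uniformly $O(1/n!)$ on a fixed compact set and vanishes outside it, which gives $A_nx_n\to-i\varphi'=A\varphi$ in $L_2$. This is Property (\ref{property_sampling_approx}).

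\textbf{Expected obstacle.} The main point needing care is Step 3. One must check that the cyclic boundary terms of $LS_n$ and $RS_n$ never interact with the support of $\varphi$, which is what requiring $n!>R+1$ ensures. One must also keep the index and sign conventions straight so that the difference quotient really approximates $-i\varphi'$ rather than $+i\varphi'$.
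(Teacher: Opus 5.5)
Your proposal is correct and follows essentially the same route as the paper: the same permutation argument for unitarity and symmetry, and the same reindexing to get $LS_n f_k^{(n)}=e^{2\pi i k/(2n!^2)}f_k^{(n)}$ and $\lambda_n(f_k^{(n)})=n!\sin(\pi k/n!^2)$; it also uses the same sampled approximants $\sum_l \varphi(l/n!)\mathbf{1}_{s_l^{(n)}}$ with a central-difference estimate. The only difference is that you bound the central difference by Taylor's remainder against $\|\varphi''\|_\infty$, where the paper writes it exactly as $u'(c_l^{(n)})$ via the Mean Value Theorem. Your version is slightly more robust, because that equality form of the Mean Value Theorem does not hold for complex-valued functions unless one splits into real and imaginary parts.
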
 
\begin{proof}
		\textbf{Property (\ref{property_sampling_finite_dim}):} As the span of a finite subset of $H$, $H_n$ is a finite-dimensional subspace of $H$ by definition. Of note, we have earlier established that $(\sqrt{n!}\mathbf{1}_{s_{l}^{(n)}})_{l=-n!^2}^{n!^2-1}$ is an orthonormal basis of $H_n$, and so $\dim(H_n)=2n!$.
	
	\textbf{Property (\ref{property_sampling_symmetric_op}):} Since $LS_n$ and $RS_n$ both permute orthonormal basis of $H_n$ given by  $(\sqrt{n!}\mathbf{1}_{s_{l}^{(n)}})_{l=-n!^2}^{n!^2-1}$, they both are unitary. Furthermore, they are inverses of each other, thus adjoint of each other. Thus, $LS_n-RS_n$ is antisymmetric, making $A_n$ symmetric.
	
	\textbf{Property (\ref{property_sampling_eigenbasis}):} We first establish that $\Omega_n$ is an orthonormal basis of $H_n$. We have, for $-n!^2\leq k <n!^2,$ that $\|f_k^{(n)}\|^2=\frac{1}{2n!}\sum_{l=-n!^2}^{n!^2-1}\frac{1}{n!}=1.$ Furthermore, for  $-n!^2\leq k_2<k_1 <n!^2,$ we have, though geometric sums, that $$2n!^2\LL f_{k_1}^{(n)},f_{k_2}^{(n)}\RR=\sum_{l=-n!^2}^{n!^2-1}\left(e^{2\pi i\frac{k_1-k_2}{2n!^2}}\right)^l=0. $$ Thus, $\Omega_n$ is orthonormal. Since $|\Omega_n|=2n!^2=\dim(H_n),$ we have that $\Omega_n$ is indeed an orthonormal basis of $H_n$.
	
	Just like with last section, we simplify the next calculation by considering the permutation  $\phi$ on $\{l\}_{l=-n!^2}^{n!^2-1}$ for which $LS_n \mathbf{1}_{s_l^{(n)}}=\mathbf{1}_{s_{\phi(l)}^{(n)}}$, noting $\phi(l)\equiv l-1$ modulo $2n!^2$. We then calculate, for $-n!^2\leq k <n!^2,$
	\begin{align*}
		LS_n f_k^{(n)}&=\frac{1}{\sqrt{2n!}}\sum_{l=-n!^2}^{n!^2-1}e^{2\pi i l\frac{k}{2n!^2}}\mathbf{1}_{s_{\phi(l)}^{(n)}}\\
		&=e^{2\pi i\frac{k}{2n!^2}}\frac{1}{\sqrt{2n!}}\sum_{l=-n!^2}^{n!^2-1}e^{2\pi i \phi(l)\frac{k}{2n!^2}}\mathbf{1}_{s_{\phi(l)}^{(n)}}\\
		&=e^{2\pi i\frac{k}{2n!^2}}\frac{1}{\sqrt{2n!}}\sum_{l=-n!^2}^{n!^2-1}e^{2\pi i l\frac{k}{2n!^2}}\mathbf{1}_{s_l^{(n)}}\\
		&=e^{2\pi i\frac{k}{2n!^2}} f_k^{(n)}.
	\end{align*}
	
	Using that $RS_n$ and $LS_n$ are inverses of each other, we find $RS_n=e^{-2\pi i\frac{k}{2n!^2}} f_k^{(n)}.$ Therefore, $A_nf_k^{(n)}=n!\frac{e^{2\pi i\frac{k}{2n!^2}}-e^{-2\pi i\frac{k}{2n!^2}}}{2i}f_k^{(n)}=n!\sin\left(\pi\frac{k}{n!^2}\right)f_k^{(n)}.$ We thus note that $\Omega_n$ is an orthonormal eigenbasis of $A_n$, with $\lambda_n(f_k^{(n)})=n!\sin\left(\pi\frac{k}{n!^2}\right).$
	
	\textbf{Property (\ref{property_sampling_approx}):} Let $u\in \Dom(A)=C_c^{\infty}(\mathbb{R}).$ Furthermore, given $n\in\mathbb{N}$, let $$u_n=\sum_{l=-n!^2}^{n!^2-1} u(\frac{l}{n!})\mathbf{1}_{s_l^{(n)}}\in H_n.$$ We establish that in $L_2(\mathbb{R},\mu),$ $u=\lim_{n\rightarrow\infty}u_n$ and $Au=\lim_{n\rightarrow\infty} A_nu_n,$ which is sufficient to conclude the property as $u$ is arbitrary.
	
	First, let $r\in\mathbb{R}$ for which $r\geq 1$ and $\operatorname{supp}(u)\subset [-r+1,r-1].$ We first establish that all $u_n$ are supported in $[-r,r].$ We have that for $t\in[-n!,n![\supset \operatorname{supp}(u_n),$ $u_n(t)=u(\frac{l_n(t)}{n!}),$ where $-n!^2\leq l_n(t)< n!^2$ is the unique such integer for which $t\in s_{l_n(t)}^{(n)}.$ Noting that $\operatorname{diam}(s_l^{(n)})=\frac{1}{n!}\leq 1$ , for any $t\notin [-r,r]$ but still inside $[-n!,n![,$ $l_n(t)\notin [-r+1,r-1],$ and so $u_n(t)=0.$ It follows that $u$ and all $u_n$ are supported in $[-r,r]$. 
	
	We set $M\in\mathbb{R}$ such that $|u'(t)|\leq M$ for all $t\in[-r,r],$ which exists since $u$ is smooth.  Let $n$ large enough that $[-r,r]\subset [-n!, n![.$ For any such $n$, we have that for any $t\in[-r,r]$, $|u(t)-u_n(t)|=|u(t)-u(l_n(t))|\leq M|t-l_n(t)|\leq\frac{M}{n!}.$ Thus, $$\|u-u_n\|^2=\int_{\mathbb{R}}|u-u_n|^2d\mu=\int_{[-r,r]}|u-u_n|^2d\mu\leq 2r\frac{M^2}{n!^2}.$$ We conclude that $(u_n)_{n\in\mathbb{N}}$ converges to $u$.
	
	We have that for any $n\in\mathbb{N}$ for which $n!>r,$ $u(t)=0$ for any $t\notin(-n!,n!-\frac{1}{n!}).$ Thus, we have that $u_n=\sum_{l=-n!^2+1}^{n!^2-2} u(\frac{l}{n!})\mathbf{1}_{s_l^{(n)}}$ and thus $$LS_nu_n=\sum_{l=-n!^2+1}^{n!^2-2} u(\frac{l}{n!})\mathbf{1}_{s_{l-1}^{(n)}}=\sum_{l=-n!^2}^{n!^2-3}u(\frac{l+1}{n!})\mathbf{1}_{s_l^{(n)}}=\sum_{l=-n!^2}^{n!^2-1}u(\frac{l+1}{n!})\mathbf{1}_{s_l^{(n)}}.$$ The same way, $$RS_nu_n=\sum_{l=-n!^2+1}^{n!^2-2} u(\frac{l}{n!})\mathbf{1}_{s_{l+1}^{(n)}}=\sum_{l=-n!^2+2}^{n!^2-1}u(\frac{l-1}{n!})\mathbf{1}_{s_l^{(n)}}=\sum_{l=-n!^2}^{n!^2-1}u(\frac{l-1}{n!})\mathbf{1}_{s_l^{(n)}}.$$
	
	Therefore, we calculate, for such $n$,
	\begin{align*}
		A_nu_n &=-i\frac{1}{2/n!}(LS_n u_n-RS_n u_n)\\
		&=-i\sum_{l=-n!^2}^{n!^2-1}\frac{u(\frac{l+1}{n!})-u(\frac{l-1}{n!})}{2/n!}\mathbf{1}_{s_l^{(n)}}\\
		&=-i\sum_{l=-n!^2}^{n!^2-1} u'(c_l^{(n)})\mathbf{1}_{s_l^{(n)}},
	\end{align*}
	where the existence of $c_l^{(n)}\in(\frac{l-1}{n!},\frac{l+1}{n!})$ is guaranteed by the Mean Value Theorem. Since $u$ is supported in $[-r+1,r-1]$, $\operatorname{supp}(u')\subset [-r+1,r-1]$ as well. Similarly as before, for any $t\in[n!,n![\setminus [-r,r],$ we have $c_{l_n(t)}^{(n)}\notin[-r+1,r-1]$ and so $(A_nu_n)(t)=-iu'(c_{l_n(t)}^{(n)})=0,$  since $|t-c_{l_n(t)}^{(n)}|<\frac{2}{n!}\leq 1$. Thus, for any $n$ such that $n!> r\geq 1$, $A_nu_n$ is supported in $[-r,r].$
	
	If $M_2\in\mathbb{R}$ is such that $|u''(t)|\leq M_2$ for all $t\in[-r,r],$ then we have that $$|(A_nu_n)(t)-(Au)(t)|=|(-iu'(c_{l_n(t)}^{(n)}))- (-iu'(t))|=|u'(t)-u'(c_{l_n(t)}^{(n)})|\leq M_2\frac{2}{n!}.$$ Thus, $$\|A_nu_n-Au\|^2=\int_{[-r,r]}|A_nu_n-Au|^2d\mu\leq 4M_2r\frac{1}{n!}.$$ Therefore, $(A_n u_n)_{n\in\mathbb{N}}$ converges to $Au$, ending the proof.
\end{proof}

\subsection{Finding a suitable scale}\label{subsection_differential_scale} While we could use Proposition \ref{proposition_scale_existence} to consider a generic scale, we meticulously build one here that helps simplify much of the calculations.

First, let $E:\mathbb{R}\rightarrow\mathbb{R}_{>0}$ with $E(t)=\left(\frac{2}{\pi}\right)^{\frac{1}{4}}e^{-t^2}.$ We have that $E\in H$ is even, with $\|E\|=1.$ We also note that $E$ is smooth, with all derivatives bounded and in $H$. 

Furthermore, let $(L_n)_{n\in\mathbb{N}}$ be an integer sequence such that $0<L_n<n!^2-1$ and   $\lim_{n\rightarrow\infty}\frac{n!^{\frac{3}{2}}}{L_n}=\lim_{n\rightarrow\infty}\frac{L_n}{n!^2}=0.$ For example, $L_n=\lfloor n!^{\frac{5}{3}}\rfloor$ works. Then, let $$E^{(n)}:=\sum_{l=-L_n}^{L_n}E\left(\frac{l}{n!}\right)\mathbf{1}_{s_l^{(n)}}+\frac{i}{n!}\mathbf{1}_{s_0^{(n)}}\in H_n\setminus\{0\}.$$

\begin{prop}\label{proposition_gaussian_finite}
	We have that $E=\lim_{n\rightarrow\infty}E^{(n)}$ and  $\lim_{n\rightarrow\infty}A_n E^{(n)}=-iE'.$ Furthermore, for any $n\in\mathbb{N}$ and $f\in\Omega_n,$ $\LL E^{(n)}, f\RR\neq 0.$ 
\end{prop}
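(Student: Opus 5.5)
The plan is to prove the three claims separately. Throughout, I write $\sigma_n:=\frac{1}{n!}$ for the mesh and $u\mapsto\bar u$ for complex conjugation. I use the orthogonality of the $\mathbf{1}_{s_l^{(n)}}$, each of norm $\sqrt{\sigma_n}$, to turn $H$-norms of elements of $H_n$ into weighted $\ell_2$ sums of their coefficients. The only growth conditions I expect to need are $\frac{L_n}{n!^2}\to 0$ and $\frac{L_n}{n!}\geq n!^{\frac12}\to\infty$; the latter follows from $\frac{n!^{3/2}}{L_n}\to 0$.

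\textbf{Convergence $E^{(n)}\to E$.} I split $E-E^{(n)}$ into three pieces.
\begin{itemize}
\item The correction term $\frac{i}{n!}\mathbf{1}_{s_0^{(n)}}$ has norm $n!^{-3/2}\to 0$.
\item On $I_n:=[-L_n\sigma_n,(L_n+1)\sigma_n)$, the step function differs from $E$ by at most $\sigma_n\sup|E'|$ pointwise, as in the proof of Property (\ref{property_sampling_approx}) above. This gives a squared $L_2$ error of at most $(2L_n+1)\sigma_n\cdot\sigma_n^2\sup|E'|^2=O\bigl(\frac{L_n}{n!^3}\bigr)\to 0$.
\item Outside $I_n$ only $E$ remains. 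Its squared norm there is bounded by $\int_{|t|\geq L_n\sigma_n}E^2\,d\mu\to 0$, because $L_n\sigma_n\to\infty$.
\end{itemize}

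\textbf{Convergence $A_nE^{(n)}\to -iE'$.} Since $L_n+1<n!^2-1$, the shifts $LS_n,RS_n$ never wrap around on the support of $E^{(n)}$. The coefficient of $A_nE^{(n)}$ on $\mathbf{1}_{s_l^{(n)}}$ can therefore be computed exactly, as in the earlier proof for $u_n$:
\begin{itemize}
\item for $|l|<L_n$ it equals $-i\frac{E((l+1)\sigma_n)-E((l-1)\sigma_n)}{2\sigma_n}=-iE'(c_l^{(n)})$ by the Mean Value Theorem;
\item at the four boundary indices $l\in\{\pm L_n,\pm(L_n+1)\}$ it is at most $n!\,E(L_n\sigma_n)$ in absolute value;
\item the image of the correction term has norm at most $\|A_n\|\cdot n!^{-3/2}\leq n!\cdot n!^{-3/2}\to 0$, using $\|A_n\|\leq n!$.
\end{itemize}
The interior part is then compared with $-iE'$ on $I_n$ using $\sup|E''|$. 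This gives a squared error $O\bigl(L_n\sigma_n\cdot\sigma_n^2\bigr)\to 0$, and the tail $\int_{|t|\geq L_n\sigma_n}|E'|^2\,d\mu\to 0$ handles the complement. The boundary pieces contribute at most $4\,n!^2E(L_n\sigma_n)^2\sigma_n$ to the squared norm. Since $E(L_n\sigma_n)\leq e^{-n!}$, this tends to $0$ superexponentially. I expect this bookkeeping of boundary and wrap-around terms to be the main, though routine, obstacle. Its whole point is that $L_n\sigma_n\to\infty$ fast enough for the Gaussian decay to kill the factor $n!$.

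\textbf{Non-vanishing of $\LL E^{(n)},f\RR$.} Fix $f=f_k^{(n)}$. By orthogonality of the indicators,
\[
\LL E^{(n)},f_k^{(n)}\RR=\frac{\sigma_n}{\sqrt{2n!}}\left(\sum_{l=-L_n}^{L_n}E(l\sigma_n)e^{-2\pi i l\frac{k}{2n!^2}}+\frac{i}{n!}\right).
\]
Because $E$ is even, pairing $l$ with $-l$ shows that the sum equals $\sum_{l}E(l\sigma_n)\cos\bigl(2\pi l\tfrac{k}{2n!^2}\bigr)$, which is real. The imaginary part of the whole bracket is therefore exactly $\frac{1}{n!}\neq 0$, so $\LL E^{(n)},f\RR\neq 0$. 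This also explains the otherwise strange perturbation $\frac{i}{n!}\mathbf{1}_{s_0^{(n)}}$ in the definition of $E^{(n)}$.
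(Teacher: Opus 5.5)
Your proposal is correct and follows essentially the same route as the paper. It uses the same three-piece splitting (tail, mean-value estimate on each cell, correction term) for $E^{(n)}\to E$, the same exact computation of $A_nE^{(n)}$ into a mean-value interior part plus boundary terms killed by Gaussian decay against the factor $n!$, and the same evenness argument showing $\operatorname{Im}\LL E^{(n)},f_k^{(n)}\RR\neq 0$. The differences are cosmetic: you truncate the difference quotients at $|l|<L_n$ and bound the four boundary coefficients directly, whereas the paper extends them to $|l|\le L_n$ and collects a remainder $R_n$, and you bound the image of the correction term via $\|A_n\|\le n!$ instead of computing it. Your boundary bound at $l=\pm L_n$, where the coefficient is $\frac{n!}{2}E((L_n-1)/n!)$, holds only for large $n$, but that suffices.
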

\begin{proof}
	We first show that $E=\lim_{n\rightarrow\infty} E^{(n)}.$ We calculate, for $n\in\mathbb{N}$,
	\begin{align*}
		\|E-E^{(n)}-\frac{i}{n!}\mathbf{1}_{s_0^{(n)}}\|^2=\|E\mathbf{1}_{\mathbb{R}\setminus[-\frac{L_n}{n!},\frac{L_n+1}{n!}[}\|^2+\sum_{l=-L_n}^{L_n}\left\|\left(E-E\left(\frac{l}{n!}\right)\right)\mathbf{1}_{s_l^{(n)}}\right\|^2.
	\end{align*}
	
	Since both $-\frac{i}{n!}\mathbf{1}_{s_0^{(n)}}$ and $E\mathbf{1}_{\mathbb{R}\setminus[-\frac{L_n}{n!},\frac{L_n+1}{n!}[}$ converge to $0$, using Monotonic Convergence Theorem and the fact that $\frac{L_n}{n!}$ goes to infinity, we have that it is sufficient to establish $\lim_{n\rightarrow\infty}\sum_{l=-L_n}^{L_n}\left\|\left(E-E\left(\frac{l}{n!}\right)\right)\mathbf{1}_{s_l^{(n)}}\right\|^2=0$ to prove $E=\lim_{n\rightarrow\infty} E^{(n)}.$ Since $E'$ is bounded, let $M\in\mathbb{R}$ such that $|E'(t)|\leq M$ holds for $t\in\mathbb{R}.$ Then, by Mean Value Theorem, we have that whenever $|l|\leq L_n$ and $t\in s_l^{(n)}, $ $|E(t)-E(\frac{l}{n!})|\leq \frac{M}{n!}.$ Therefore, for such $l$, $\|\left(E-E\left(\frac{l}{n!}\right)\right)\mathbf{1}_{s_l^{(n)}} \|^2\leq \frac{M^2}{n!^3},$ and so $$\sum_{l=-L_n}^{L_n}\left\|\left(E-E\left(\frac{l}{n!}\right)\right)\mathbf{1}_{s_l^{(n)}}\right\|^2\leq\frac{(2L_n+1)M^2}{n!^3}. $$ Since $\lim_{n\rightarrow\infty}\frac{L_n}{n!^2}=0,$ we get $\lim_{n\rightarrow\infty}\sum_{l=-L_n}^{L_n}\left\|\left(E-E\left(\frac{l}{n!}\right)\right)\mathbf{1}_{s_l^{(n)}}\right\|^2=0,$ and $E=\lim_{n\rightarrow\infty}E^{(n)}.$
	
	Next, since $-n!^2<-L_n<L_n<n!^2-1$, we have $A_n\mathbf{1}_{s_l^{(n)}}=-i\frac{n!}{2}(\mathbf{1}_{s_{l-1}^{(n)}}-\mathbf{1}_{s_{l+1}^{(n)}})$ whenever $-L_n\leq l\leq L_n.$ Thus, 
	
	\begin{samepage}
	\begin{align*}
		A_n\left(E^{(n)}-\frac{i}{n!}\mathbf{1}_{s_0^{(n)}}\right)&=\sum_{l=-L_n}^{L_n}E\left(\frac{l}{n!}\right)A_n\mathbf{1}_{s_l^{(n)}}\\
		&=\frac{-in!}{2}\sum_{l=-L_n}^{L_n}E\left(\frac{l}{n!}\right)\left(\mathbf{1}_{s_{l-1}^{(n)}}-\mathbf{1}_{s_{l+1}^{(n)}}\right) \\
		&=\frac{-in!}{2}\left(\sum_{l=-L_n}^{L_n}\left(E\left(\frac{l+1}{n!}\right)-E\left(\frac{l-1}{n!}\right)\right)\mathbf{1}_{s_l^{(n)}}+R_n\right),
	\end{align*}
	\end{samepage}
	where $$R_n=E(\frac{-L_n}{n!})\mathbf{1}_{s_{-L_n-1}^{(n)}}-E(\frac{L_n+1}{n!})\mathbf{1}_{s_{L_n}^{(n)}}+E(\frac{L_n}{n!})\mathbf{1}_{s_{L_n+1}^{(n)}}-E(\frac{-L_n-1}{n!})\mathbf{1}_{s_{L_n}^{(n)}}.$$ We have that $$\|n!R_n\|^2=n!\left(|E(\frac{-L_n}{n!})|^2+|E(\frac{L_n+1}{n!})|^2+|E(\frac{L_n}{n!})|^2+|E(\frac{-L_n-1}{n!})|^2\right),$$ and so $\|n!R_n\|^2\leq 4n!\left(\frac{2}{\pi}\right)^{\frac{1}{4}}e^{-2(\frac{L_n}{n!})^2}$. Since $\frac{L_n}{n!^{\frac{3}{2}}}\rightarrow\infty$, for large enough $n$ we have $\frac{L_n}{n!}>\sqrt{n!},$ and so $\|n!R_n\|^2\leq 4\left(\frac{2}{\pi}\right)^{\frac{1}{4}}n!e^{-2 n!},$ which implies $\lim_{n\rightarrow\infty} n!R_n=0.$ We can also calculate that $\lim_{n\rightarrow\infty}A_n(\frac{i}{n!}\mathbf{1}_{s_0^{(n)}})=\lim_{n\rightarrow\infty}\frac{1}{2}(\mathbf{1}_{s_{-1}^{(n)}}-\mathbf{1}_{s_1^{(n)}})=0.$ 
	
	Therefore, if $F_n:=iA_n\left(E^{(n)}-\frac{i}{n!}\mathbf{1}_{s_0^{(n)}}\right)-\frac{n!}{2}R_n,$ then $\lim_{n\rightarrow\infty} F_n= E'$ if and only if $\lim_{n\rightarrow\infty}A_nE^{(n)}=-iE'.$
	
	Then, by the Mean Value Theorem, we have elements  $c_l^{(n)}\in(\frac{l-1}{n!},\frac{l+1}{n!})$ such that
	\begin{align*}
		F_n&=\sum_{l=-L_n}^{L_n}\frac{E\left(\frac{l+1}{n!}\right)-E\left(\frac{l-1}{n!}\right)}{2/n!}\mathbf{1}_{s_l^{(n)}}=\sum_{l=-L_n}^{L_n} E'(c_l^{(n)})\mathbf{1}_{s_l^{(n)}}.
	\end{align*}
	Using $M_1\in\mathbb{R}$ as any bound on $E'',$ we can use the Mean Value Theorem again to establish that $|E'(t)-E'(c_l^{(n)})|\leq\frac{2M_1}{n!}$ whenever $t\in s_l^{(n)},$ thus
	\begin{align*}
		\|E'-F_n\|^2&=\|E'\mathbf{1}_{\mathbb{R}\setminus[-L_n,L_n[}\|^2+\sum_{l=-L_n}^{L_n}\|(E'-E'(c_l^{(n)}))\mathbf{1}_{s_l^{(n)}}\|^2\\
		&\leq \|E'\mathbf{1}_{\mathbb{R}\setminus[-L_n,L_n[}\|^2+4M_1^2\frac{2L_n+1}{n!^3}.
	\end{align*}
	
	Therefore, since $E'\in H$, we have $E'=\lim_{n\rightarrow\infty} F_n,$ and so $-iE'=\lim_{n\rightarrow\infty}A_n E_n.$
	
	For the last part, we calculate, for any $f_k^{(n)}\in\Omega_n,$
	\begin{align*}
		\LL  E^{(n)}, f_k^{(n)} \RR&=\frac{1}{n!\sqrt{2n!}}\sum_{l=-L_n}^{L_n}e^{-2\pi i l\frac{k}{2n!^2}}E\left(\frac{l}{n!}\right)+\frac{i}{n!^2\sqrt{2n!}}\\
		&=\frac{1}{n!\sqrt{2n!}}\left(E(0)+\frac{i}{n!}+\sum_{l=1}^{L_n}\left(e^{-2\pi i l\frac{k}{2n!^2}}E\left(\frac{l}{n!}\right)+e^{2\pi i l\frac{k}{2n!^2}}E\left(\frac{-l}{n!}\right)\right)\right)\\
		&=\frac{1}{n!\sqrt{2n!}}\left(\left(\frac{2}{\pi}\right)^{\frac{1}{4}}+\frac{i}{n!}+2\sum_{l=1}^{L_n}\cos\left(2\pi l\frac{k}{2n!^2}\right)E\left(\frac{l}{n!}\right)\right).
	\end{align*}
	
	Thus, $\operatorname{Im}(\LL E^{(n)},f_k^{(n)}\RR)=\frac{i}{n!^2\sqrt{2n!}}\neq 0,$ and so $\LL E^{(n)}, f_k^{(n)}\RR\neq 0.$
\end{proof}

Then, let $(q_j)_{j\in\mathbb{N}}$ be a count of $\mathbb{Q},$ with $q_1=0.$ Furthermore, for $r\in\mathbb{R},$ let $\mathcal{T}_r:H\rightarrow H$ be the translation by $r$, so that $(\mathcal{T}_r g)(t)=g(t-r).$ We note that $\mathcal{T}_r$ is a unitary map on $H$. Then, for $j\in\mathbb{N},$ we define $g_j=\mathcal{T}_{q_j} E$. We have that each $g_j\in H$ is smooth, with $\|g_j\|=\|E\|=1.$

Using the Fourier Transform, one could prove the following. We do find relevant to include the schema of an elementary proof, which is taken from \cite{nonez2024spectralequivalencesnonstandardsamplings}.

\begin{prop}
	The set $\{g_j\}_{j\in\mathbb{N}}$ is dense-spanning in $L_2(\mathbb{R},\mu).$
\end{prop}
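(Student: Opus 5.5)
I will show that the orthogonal complement of $\Span(\{g_j\}_{j\in\mathbb{N}})$ in $L_2(\mathbb{R},\mu)$ is trivial. Let $u\in H$ be such that $\LL u, g_j\RR=0$ for every $j$. Define $F:\mathbb{R}\rightarrow\mathbb{C}$ by $F(r)=\LL u,\mathcal{T}_rE\RR$. Since $r\mapsto\mathcal{T}_r E$ is continuous from $\mathbb{R}$ into $H$ (the Gaussian is uniformly continuous and in $L_2$, or one can bound $\|\mathcal{T}_rE-\mathcal{T}_sE\|$ through $E'$), the function $F$ is continuous. It vanishes on the dense set $\mathbb{Q}=\{q_j\}_{j\in\mathbb{N}}$, so $F\equiv 0$. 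Hence $u$ is orthogonal to every translate $\mathcal{T}_rE$ with $r\in\mathbb{R}$, and it suffices to prove that the closed span $W$ of $\{\mathcal{T}_rE\}_{r\in\mathbb{R}}$ is all of $H$.

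To stay elementary and avoid the Fourier transform, the plan is to produce approximate identities inside $W$ and then conclude by convolution.

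\textbf{Step 1: $W$ contains all Gaussians centred anywhere with width $\sigma\leq 1$.} The space $W$ is closed and translation invariant, and for $\phi\in L_1(\mathbb{R})$ continuous with compact support, the convolution $\phi*E=\int\phi(r)\mathcal{T}_rE\,dr$ is a Bochner or Riemann integral of elements of $W$, so it lies in $W$. The convolution of two Gaussians is a Gaussian with variances adding. Taking $\phi$ to be a truncated Gaussian and letting the truncation grow, I get that $E*G_s\in W$ for every Gaussian $G_s$, where $G_s$ has variance $s$. This only produces Gaussians wider than $E$, which is the wrong direction. To get narrower Gaussians, I will instead run the argument on $u$ directly. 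Let $\psi_s$ be the normalized heat kernel. Then $u*\psi_s$ is orthogonal to nothing new, but $\LL u*\psi_s,\mathcal{T}_rE\RR=\LL u,\mathcal{T}_r(E*\psi_s)\RR=0$. The key reformulation is then: the function $v=u*\check E$ (correlation with $E$) vanishes identically. I will show $v\equiv0$ forces $u=0$. One way to do this is through the heat semigroup: $v$ is, up to constants, $u$ evolved by the heat equation for time $1/8$. Vanishing of a heat-evolved $L_2$ function implies vanishing of the initial datum, by backward uniqueness, or more elementarily by Hermite/moment expansion.

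\textbf{Step 2 (elementary route, which I would try first).} Expand instead using the derivatives of $r\mapsto\mathcal{T}_rE$ at $r=0$. Since $F\equiv0$, all of its derivatives at $0$ vanish, giving $\LL u, E^{(k)}\RR=0$ for all $k$. The derivatives $E^{(k)}$ are, up to constants, $H_k(\sqrt2\,t)e^{-t^2}$ with $H_k$ Hermite polynomials. Thus $u$ is orthogonal to $p(t)e^{-t^2}$ for every polynomial $p$. The remaining claim, that $\{p(t)e^{-t^2}\}$ is dense in $L_2$, is the classical completeness of Hermite functions. I would prove it by taking $w=\bar u e^{-t^2}\in L_1$ with $\int t^k w=0$ for all $k$, and showing $\int e^{\lambda t}w=0$ for real $\lambda$. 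Summing the exponential series is justified by dominated convergence, since $e^{|\lambda t|}|u|e^{-t^2}$ is integrable. This gives $\int \mathcal{T}_a E\cdot \bar u=0$ type identities for shifted and modulated Gaussians, and in particular $\int e^{\lambda t - t^2}\bar u\,dt=0$ for all $\lambda$, including complex $\lambda$ by analyticity in $\lambda$.

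\textbf{Main obstacle.} The hardest step is concluding $u=0$ from $\int e^{i\xi t}e^{-t^2}\bar u(t)\,dt=0$ for all $\xi$ without invoking Fourier injectivity. My plan is to integrate this identity against $\xi\mapsto e^{-\epsilon\xi^2-i\xi x}$. An explicit Gaussian integral then gives $(\bar ue^{-t^2})*\gamma_\epsilon(x)=0$ for every $x$, where $\gamma_\epsilon$ is a Gaussian approximate identity. Letting $\epsilon\to0$, this convolution tends to $\bar u e^{-t^2}$ in $L_1$. Hence $\bar ue^{-t^2}=0$ almost everywhere, so $u=0$.
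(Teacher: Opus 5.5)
Your proof is correct, but it runs dual to the paper's.

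\textbf{The paper's route.} The paper works inside the closed span $\tilde{H}$ and enlarges it by successive $L_2$-limits:
\begin{itemize}
\item from $E(x)e^{qx}$ with $q$ rational to $E(x)e^{tx}$ with $t$ real;
\item then $E(x)x^ke^{tx}$, by difference quotients;
\item then $E(x)e^{itx}$, by Taylor partial sums;
\item then $E\cdot P_kf$, via the Fourier series of the smooth periodic function $P_kf$;
\item then every $f\in C_c^{\infty}(\mathbb{R})$, via $E\cdot P_n(f/E)$;
\item and finally all of $H$, by density.
\end{itemize}

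\textbf{Your route.} You fix $u\perp\{g_j\}_{j\in\mathbb{N}}$ and manipulate only scalar identities.
\begin{itemize}
\item Continuity of $F$ on the dense set $\mathbb{Q}$ replaces the paper's rational-to-real step.
\item Your moment and exponential-series steps are the duals of the paper's difference-quotient and Taylor steps.
\item Your endgame is injectivity of the Fourier transform on $L_1$, proved from scratch with the Gauss--Weierstrass kernel: an explicit Gaussian integral, Fubini, and the $L_1$ approximate-identity property. This replaces the paper's Fourier series, periodization, and density of $C_c^{\infty}(\mathbb{R})$.
\end{itemize}

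\textbf{What each buys.} In your route every limit interchange is a scalar dominated-convergence or Fubini argument, and no periodization is needed. The paper's route exhibits explicit approximants. It also only presupposes convergence of Fourier series of smooth periodic functions, which fits the section's aim of \emph{recovering} the Fourier transform rather than using it. Your argument is self-contained, so there is no circularity either. Both routes give, as a by-product, Remark \ref{remark_dense_gaussian}: your last step is exactly the statement that $\{e^{it\cdot}E\}_{t\in\mathbb{R}}$ is dense-spanning.

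\textbf{Two tidying points.}
\begin{itemize}
\item Step 1 is a dead end. Its heading claim is never established, and backward uniqueness for the heat equation is only named. Step 2 does not use it, so delete it.
\item The detour through $E^{(k)}$ and Hermite polynomials can also go. Since $F(r)=(2/\pi)^{1/4}e^{-r^2}\int u(t)e^{-t^2}e^{2rt}\,dt$, the identity $F\equiv0$ already says $\int e^{\lambda t}u(t)e^{-t^2}\,dt=0$ for all real $\lambda$. The moment identities then follow by differentiating under the integral at $\lambda=0$, with dominated convergence. This also spares you from justifying the $L_2$-differentiability of $r\mapsto\mathcal{T}_rE$.
\end{itemize}
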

\begin{proof}[Idea of Proof]
	For this part (and this part alone) we will denote a given function $f:\mathbb{R}\rightarrow\mathbb{C}$ by $f(x)$.  Let $\tilde{H}=\overline{\Span(\{g_j\}_{j\in\mathbb{N}})}\subset H.$ We also define, for $f:\mathbb{R}\rightarrow \mathbb{C},$ $P_k f$ to be the $2k$-periodic extension of $f|_{[-k,k)}.$ Since $g_j(x)=E(x)\cdot e^{2q_j x}e^{-q_j^2},$ $\tilde{H}$ must contain:
	\begin{itemize}
		\item $E(x)\cdot e^{qx}$, for any $q\in \mathbb{Q}$;
		\item $E(x)\cdot e^{tx}$, for any $t\in\mathbb{R}$, using $E(x)\cdot e^{q_nx}$ when $q_n\rightarrow t$;
		\item $E(x)\cdot e^{tx} x$, for any $t\in\mathbb{R}$, using $E(x)\cdot \frac{e^{(t+1/n)x}-e^{tx}}{1/n}$;
		\item $E(x)\cdot e^{tx} x^k$, for any $t\in\mathbb{R}$ and $k\in\mathbb{N}$, inductively using a similar sequence;
		\item  $E(x)\cdot e^{itx}$ for any $t\in\mathbb{\mathbb{R}}$, using $E(x)\cdot \left(\sum_{k=1}^n\frac{(it)^k}{k!}x^k\right);$
		\item $E(x)\cdot (P_kf)(x)$ for any $k\in\mathbb{N}$ and $f\in C_c^{\infty}(\mathbb{R})$ with $\operatorname{supp}(f)\subset(-k,k)$, using Fourier series for periodic functions $E(x)\cdot \sum_{l=-n}^n\widehat{(P_kf)}_le^{ \frac{2\pi i l}{2k}x}$;
		\item $f$ for any $f\in C_c^{\infty}(\mathbb{R})$, using $\left(\frac{2}{\pi}\right)^{\frac{1}{4}}E(x)\cdot (P_n\frac{f}{E})(x)$ for $\operatorname{supp}(f)\subset(-n,n)$;
		\item $f$ for any $f\in H$, by density of $C_c^{\infty}(\mathbb{R}).$
	\end{itemize}
	In all steps between the second and second-to-last, the Dominated Convergence Theorem is used, noting that $E\cdot f_n$ is always dominated in $L_2(\mathbb{R},\mu)$ whenever there is some real $t$ and constant $K$ for which $|f_n(x)|\leq Ke^{|tx|}$.  
\end{proof}
\begin{remark}\label{remark_dense_gaussian}
	Going through the last part of the reasoning, we also see that $\{e^{it\cdot} E\}_{t\in\mathbb{R}}$ is dense-spanning in $L_2(\mathbb{R},\mu).$ Furthermore, we can replace $E(x)$ by any function of the for $Ce^{-a x^2}$ where $a>0.$
\end{remark}
We can now define our scale. For $n\in\mathbb{N},$ let $N_n=n!,$ and for $j\leq N_n,$ let $e_j^{(n)}=RS_n^{\lfloor n!q_j\rfloor} E^{(n)},$ and $c_j^{(n)}=\frac{1}{2^j(1-2^{-N_n})\|E^{(n)}\|^2}.$ Finally, we define $(S_n)_{n\in\mathbb{N}}$ with $S_n=(H_n,A_n,\Omega_n, (e_j^{(n)},c_j^{(n)})_{j=1}^{N_n})_{n\in\mathbb{N}}.$


\begin{prop}
	We have that $(S_n)_{n\in\mathbb{N}}$ is a strong $A$-sampling-scale sequence. Furthermore, for all $j\in\mathbb{N},$  $e_j=g_j$ and $c_j=\frac{1}{2^j}$.
\end{prop}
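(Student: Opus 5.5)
The plan is to verify the properties of Definitions \ref{definition_scale} and \ref{definition_sampling_scale_sequence} one at a time. The sampling-sequence properties are already established by the preceding proposition. Two structural facts drive almost everything.
\begin{itemize}
\item[(a)] $RS_n$ is unitary, commutes with $LS_n$, hence commutes with $A_n$, and acts diagonally on $\Omega_n$ with $RS_nf_k^{(n)}=e^{-2\pi i\frac{k}{2n!^2}}f_k^{(n)}$.
\item[(b)] As long as no wrap-around occurs, $RS_n^m$ acts on elements of $H_n$ exactly as the translation $\mathcal{T}_{m/n!}$.
\end{itemize}
Write $m_j^{(n)}=\lfloor n!q_j\rfloor$. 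Then $|m_j^{(n)}|\leq n!|q_j|+1$. The vector $E^{(n)}$ is supported on the cells $s_l^{(n)}$ with $|l|\leq L_n$, and $A_nE^{(n)}$ on those with $|l|\leq L_n+1$. Since $L_n/n!^2\to 0$, for each fixed $j$ and all large $n$ we have $L_n+2+n!|q_j|<n!^2-1$. Hence for such $n$,
\[
e_j^{(n)}=\mathcal{T}_{m_j^{(n)}/n!}E^{(n)} \quad\text{and}\quad RS_n^{m_j^{(n)}}A_nE^{(n)}=\mathcal{T}_{m_j^{(n)}/n!}A_nE^{(n)}.
\]
This wrap-around bookkeeping is the step I expect to need the most care; everything else is routine.

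\textbf{Scale properties.} Since $RS_n$ is unitary, $\|e_j^{(n)}\|=\|E^{(n)}\|\neq 0$. Therefore
\[
\sum_{j=1}^{N_n}c_j^{(n)}\|e_j^{(n)}\|^2=\frac{1}{1-2^{-N_n}}\sum_{j=1}^{N_n}2^{-j}=1,
\]
which gives Property (\ref{property_scale_proba}). Property (\ref{property_scale_infinity}) is clear from $N_n=n!$. By Proposition \ref{proposition_gaussian_finite}, $\|E^{(n)}\|\to\|E\|=1$, so $c_j^{(n)}\to 2^{-j}=c_j$. For the limit of $e_j^{(n)}$, I would use the decomposition
\[
e_j^{(n)}-g_j=\mathcal{T}_{m_j^{(n)}/n!}\bigl(E^{(n)}-E\bigr)+\bigl(\mathcal{T}_{m_j^{(n)}/n!}-\mathcal{T}_{q_j}\bigr)E .
\]
The first term tends to $0$ because translations are unitary and $E^{(n)}\to E$. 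The second tends to $0$ because $m_j^{(n)}/n!\to q_j$ and $r\mapsto\mathcal{T}_rE$ is norm-continuous; one can check this directly for the smooth, rapidly decaying $E$ by dominated convergence. So $e_j=g_j\neq 0$. Property (\ref{property_scale_dense}) is the preceding proposition, and Property (\ref{property_scale_bias}) follows from $\sum_j 2^{-j}\|g_j\|^2=1$.

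\textbf{Compatibility properties.} Property (\ref{property_compat_H}) holds because $E^{(n)}\in H_n$ and $RS_n$ preserves $H_n$. For Property (\ref{property_compat_Omega}), I would take $j=1$. Since $q_1=0$, we have $e_1^{(n)}=E^{(n)}$, and Proposition \ref{proposition_gaussian_finite} gives $\LL E^{(n)},f\RR\neq 0$ for every $f\in\Omega_n$. (More generally, by (a), $|\LL e_j^{(n)},f\RR|=|\LL E^{(n)},f\RR|$.) For Property (\ref{property_compat_A}), commutation gives $\|A_ne_j^{(n)}\|=\|RS_n^{m_j^{(n)}}A_nE^{(n)}\|=\|A_nE^{(n)}\|$. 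Hence
\[
\sum_{j=1}^{N_n}c_j^{(n)}\|A_ne_j^{(n)}\|^2=\frac{\|A_nE^{(n)}\|^2}{\|E^{(n)}\|^2}.
\]
This is a convergent sequence, with limit $\|E'\|^2$, by Proposition \ref{proposition_gaussian_finite}, so it is bounded. Finally, for strongness (Property (\ref{property_compat_strong})), write
\[
A_ne_j^{(n)}=\mathcal{T}_{m_j^{(n)}/n!}A_nE^{(n)} \quad\text{for large } n.
\]
Then the same two-term decomposition as above, now using $A_nE^{(n)}\to -iE'$, gives $A_ne_j^{(n)}\to -i\mathcal{T}_{q_j}E'=-ig_j'$.
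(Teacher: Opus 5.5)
Your proposal is correct and follows essentially the same route as the paper: identify $RS_n^m$ with $\mathcal{T}_{m/n!}$ on the cells where no wrap-around occurs, use unitarity of $RS_n$ and its commutation with $A_n$ for Properties (\ref{property_compat_A}) and (\ref{property_compat_Omega}), and conclude from Proposition \ref{proposition_gaussian_finite}. The only difference is minor. The paper notes that, since $q_j$ is rational, $\lfloor n!q_j\rfloor=n!q_j$ exactly for large $n$, so $e_j^{(n)}=\mathcal{T}_{q_j}E^{(n)}$ and $\|e_j^{(n)}-g_j\|=\|E^{(n)}-E\|$ directly; your two-term decomposition with strong continuity of translations is equally valid, just slightly more general than needed.
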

\begin{proof}
	
	\textbf{Property (\ref{property_scale_elements}), (\ref{property_scale_infinity})  and (\ref{property_compat_H})}: It is clear that $N_n=n!\in\mathbb{N},$ and that $\lim_{n\rightarrow\infty} N_n=\infty.$ Furthermore, $c_j^{(n)}>0$  holds by inspection. Since $E^{(n)}\in H_n\setminus\{0\}$, and $RS_n$ is an isometry on $H_n$, $e_j^{(n)}\in H_n$ is well-defined and non-zero. We note $\|e_j^{(n)}\|=\|E^{(n)}\|$ whenever $j\leq N_n$.
	
	\textbf{Property (\ref{property_scale_proba})}: We can calculate $\sum_{j=1}^{N_n}c_j^{(n)}\|e_j^{(n)}\|^2=\frac{1}{1-2^{-N_n}}\sum_{j=1}^{N_n}\frac{1}{2^j}=1.$
	
	
	\textbf{Properties (\ref{property_scale_limits}), (\ref{property_compat_strong}) and (\ref{property_compat_A})}: Since $\lim_{n\rightarrow\infty}\|E^{(n)}\|=\|E\|=1,$   $\lim_{n\rightarrow\infty} c_j^{(n)}=\frac{1}{2^j}=c_j$ holds for any $j\in\mathbb{N}.$ Next, we inductively have that for any $m\in\mathbb{Z}$ and $s_l^{(n)}$ for which $|m|+|l|<n!^2,$ $$RS_n^m\mathbf{1}_{s_l^{(n)}}=\mathbf{1}_{s_{l+m}^{(n)}}=\mathcal{T}_{\frac{m}{n!}}\mathbf{1}_{s_l^{(n)}}.$$ Furthermore, we have that $\lim_{n\rightarrow\infty}n!-\frac{L_n+1}{n!}=\lim_{n\rightarrow\infty}n!(1-\frac{L_n+1}{n!^2})=\infty$. We also have that for any $q\in\mathbb{Q},$ any large enough $n\in\mathbb{N}$ leads to $\lfloor n!q \rfloor=n!q\in\mathbb{Z}.$ Thus, for any $j\in\mathbb{N},$ there exists $M\in\mathbb{N}$ such that for any $n>M,$ we have  $N_n\geq j$, $\lfloor n!q_j\rfloor=n!q_j$ and $n!|q_j|+L_n+1<n!^2.$ For such $n$, we then have
	\begin{align*}
		e_j^{(n)}&=RS_n^{n!q_j}\sum_{l=-L_n}^{L_n}E\left(\frac{l}{n!}\right)\mathbf{1}_{s_l^{(n)}}=\sum_{l=-L_n}^{L_n}E\left(\frac{l}{n!}\right)\mathcal{T}_{\frac{n!q_j}{n!}}\mathbf{1}_{s_{l}^{(n)}}=\mathcal{T}_{q_j}E^{(n)}.
	\end{align*}
	 Thus, $\|e_j^{(n)}-g_j\|=\|\mathcal{T}_{q_j}(E^{(n)}-E)\|=\|E^{(n)}-E\|$ for any such large $n$, which implies $g_j=\lim_{n\rightarrow\infty} e_j^{(n)}=e_j$  by Proposition \ref{proposition_gaussian_finite}.
	 
	 We also have that since $LS_n$ and $RS_n$ are inverses of each other, both commute with $A_n$. Thus, we have that $A_ne_j^{(n)}=RS_n^{\lfloor n!q_j\rfloor}A_nE^{(n)}.$ Since $A_nE^{(n)}\in\Span(\{\mathbf{1}_{s_l^{(n)}}\}_{l=-L_n-1}^{L_n+1}),$ we also have that $RS_n^{\lfloor n!q_j\rfloor}A_nE^{(n)}=\mathcal{T}_{q_j}A_nE^{(n)}$ for any $n>M.$ Therefore, $A_n e_j^{(n)}$ converges to $-i\mathcal{T}_{q_j}E'$ by Proposition \ref{proposition_gaussian_finite}. 
	 
	 We finally note that since $A_nE^{(n)}$ is a convergent thus bounded sequence, and since $\|A_ne_j^{(n)}\|=\|A_nE^{(n)}\|$ holds whenever $j\leq N_n$, and since $\sum_{j=1}^{N_n}c_j^{(n)}=\frac{1}{\|E^{(n)}\|^2}$ is also bounded, we have that $\sum_{j=1}^{N_n}c_j^{(n)}\|A_ne_j^{(n)}\|^2$ is a bounded sequence.
	
	\textbf{Property (\ref{property_scale_dense})}: Density of $\{e_j\}_{j\in\mathbb{N}}$ is given by the previous proposition.
	
	\textbf{Property (\ref{property_scale_bias})}: We have $\sum_{j\in\mathbb{N}}c_j\|e_j\|^2=\sum_{j\in\mathbb{N}}\frac{1}{2^j}=1.$
	
	
	\textbf{Property (\ref{property_compat_Omega}}): For $f\in\Omega_n,$ $\LL e_1^{(n)}, f\RR=\LL E^{(n)},f\RR\neq 0$ by Proposition \ref{proposition_gaussian_finite}.
\end{proof}

\subsection{Resulting measures}

We now study the resulting measure space induced by this $A$-sampling-scale sequence.
\begin{definition}
	Let $\varphi:\mathbb{R}\rightarrow\mathbb{R}_{>0}$ be defined by $\varphi(\omega)=\left(\frac{\pi}{2}\right)^{\frac{1}{2}}e^{-\frac{\pi^2\omega^2}{2}},$ and let $\mu'$ be the probability measure on $\Borel(\mathbb{R})$ such that $d\mu'=\varphi d\mu.$ Furthermore, let $X:\mathbb{R}\rightarrow Q$ such that $$X(\omega)=\left(\frac{1}{\sqrt{2^{j}}}e^{-\pi i q_j \omega}\right)_{j\in\mathbb{N}}.$$
\end{definition}
\begin{theorem}\label{theorem_differential_fourier_transform}
	The sequence $(S_n)_{n\in\mathbb{N}}$ is convergent, with the resulting $\HMU$ being given by the pushforward measure $\HMU=X_{\#}(\mu').$ Furthermore, if $U:H\rightarrow L_2(\mathbb{R},\mu')$ is given by $U(g)=\HU(g)\circ X,$ then $U$ is a unitary map and for any $\psi\in L_2(\mathbb{R},\mu)\cap L_1(\mathbb{R},\mu)$, $$(U(\psi))(\omega)=\frac{1}{\sqrt{2\varphi(\omega)}}\int_{\mathbb{R}}e^{-\pi i \omega t} \psi(t)d\mu(t)$$ for almost all $\omega\in \mathbb{R}.$ Finally, if $m:\mathbb{R}\rightarrow\mathbb{R}$ is given by $m=\Hm\circ X,$ then $m(\omega)=\pi \omega$ almost everywhere in $\mathbb{R}$, and for any $z\in \Dom(A)=C_c^{\infty}(\mathbb{R})$, $$(U(Az))=m\cdot U(z).$$
\end{theorem}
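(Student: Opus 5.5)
The plan follows the key steps used for Theorem \ref{theorem_shift_fourier_series}, with Riemann sums replacing exact equidistribution. Throughout, index $\Omega_n$ by the frequencies $\omega_k:=k/n!$ for $-n!^2\le k<n!^2$; these form a grid of mesh $1/n!$ on $[-n!,n!)$.

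\textbf{Step 1: explicit inner products and the measure $\mu_n$.} For fixed $j$ and large $n$ we have $e_j^{(n)}=\mathcal{T}_{q_j}E^{(n)}=RS_n^{n!q_j}E^{(n)}$. Using $\LL \mathbf{1}_{s_l^{(n)}},f_k^{(n)}\RR=\frac{1}{n!\sqrt{2n!}}e^{-\pi i lk/n!^2}$, this gives
\[
\LL e_j^{(n)},f_k^{(n)}\RR=e^{-\pi i q_j\omega_k}\,\LL E^{(n)},f_k^{(n)}\RR .
\]
The factor $\sqrt{2n!}\,\LL E^{(n)},f_k^{(n)}\RR$ is, up to an $O(1/n!^2)$ imaginary term, a Riemann sum for the Fourier integral $\int E(t)e^{-\pi i\omega_k t}\,dt=(2/\pi)^{1/4}\sqrt{\pi}\,e^{-\pi^2\omega_k^2/4}$. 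Its square is $\sqrt{2\pi}\,e^{-\pi^2\omega_k^2/2}=2\varphi(\omega_k)$. Lemma \ref{lemma_uniform_series} controls the indices $j$ that are not yet "stabilized", and $\sum_j c_j^{(n)}=\|E^{(n)}\|^{-2}\to1$. Together these give $\mu_n(f_k^{(n)})\approx\varphi(\omega_k)/n!$, uniformly for $\omega_k$ in compact sets. We also need tightness: the $\mu_n$-mass of $\{|\omega_k|>R\}$ should be small uniformly in $n$ once $R$ is large. I would obtain it from Parseval. The discrete Fourier coefficients of $E^{(n)}$ at large $|\omega_k|$ are controlled by the smoothness of $E$ (summation by parts using the bounded derivatives), plus the explicit $\frac{i}{n!}\mathbf{1}_{s_0^{(n)}}$ term, which carries total mass $O(1/n!^2)$.

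\textbf{Step 2: convergence of $X_n$, of the measures, and identification of $\HU$.} From Step 1, $\pi_j(X_n(f_k^{(n)}))=\sqrt{c_j^{(n)}}\,e^{-\pi i q_j\omega_k}\,\frac{\LL E^{(n)},f_k\RR}{\sqrt{\mu_n(f_k)}}$. For bounded $\omega_k$ the inner product $\LL E^{(n)},f_k\RR$ has positive real part dominating its imaginary part, so the last factor tends to $1$. Hence $X_n(f_{k_n}^{(n)})\to X(\omega)$ whenever $\omega_{k_n}\to\omega$. Consequently, for $h\in C(Q)$,
\[
\int_{\Omega_n}h\circ X_n\,d\mu_n\approx\sum_k h(X(\omega_k))\frac{\varphi(\omega_k)}{n!}\to\int_{\mathbb{R}}h\circ X\,d\mu' .
\]
I would prove this by dominated convergence on step functions in $\omega$ on $[-R,R]$, with tightness handling the tails. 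This shows that $S$ is convergent with $\HMU=X_\#(\mu')$. Next, $V_j\circ X=e^{-\pi i q_j\omega}$, so $(U g_j)(\omega)=e^{-\pi i q_j\omega}$. A direct computation with the Gaussian transform checks that this equals $\frac{1}{\sqrt{2\varphi(\omega)}}\int e^{-\pi i\omega t}g_j(t)\,dt$. Since $U$ and the right-hand map $\psi\mapsto(2\varphi)^{-1/2}\widehat\psi(\pi\,\cdot)$ are both isometries into $L_2(\mu')$ (the latter by Plancherel, after accounting for the normalization), and they agree on the dense span of $\{g_j\}$, they agree on $L_1\cap L_2$. Surjectivity of $U$ follows from Plancherel as well, or from density of $\{e^{-\pi i q\omega}\}_{q\in\mathbb{Q}}$ in $L_2(\mu')$ in the spirit of Remark \ref{remark_dense_gaussian}.

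\textbf{Step 3: the multiplier.} By Remark \ref{remark_convergent_eigenvalue}, $\Hm=\lim^{(S)}_n\lambda_n$. Here $\lambda_n(f_k)=n!\sin(\pi\omega_k/n!)=\pi\omega_k+O(\omega_k^3/n!^2)$. The $L_2$ bound from Theorem \ref{theorem_multiplication_operator} together with tightness gives $\int(h\circ X_n)\lambda_n\,d\mu_n\to\int(h\circ X)\,\pi\omega\,d\mu'$, so $m(\omega)=\pi\omega$ almost everywhere. Alternatively, one can check $U(Ag_j)=\pi\omega\,U(g_j)$ via the derivative rule of the Fourier transform and invoke uniqueness. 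Then $U(Az)=m\cdot U(z)$ follows from Theorem \ref{theorem_multiplication_operator}.

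The main obstacle is Step 1's \emph{uniform} control. This means the Riemann-sum approximation of $\LL E^{(n)},f_k\RR$, the nonvanishing and the phase near $0$ on compact frequency sets, and above all the tightness of $\mu_n$ in $\omega$. Frequencies near $\pm n!$, where aliasing occurs and the sine deviates from linear, must be shown to carry negligible mass. I would first try a summation-by-parts bound $|\LL E^{(n)},f_k\RR|\lesssim \frac{1}{\sqrt{n!}}\min\bigl(1,\,|n!\sin(\pi\omega_k/n!)|^{-2}\bigr)$.
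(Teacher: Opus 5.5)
Your outline follows the paper's classical ``Key Steps'' proof. Its ingredients are: the explicit inner products $\LL e_j^{(n)},f_k^{(n)}\RR=e^{-\pi i\lfloor n!q_j\rfloor k/n!^2}\LL E^{(n)},f_k^{(n)}\RR$; Riemann-sum convergence of $\sqrt{n!}\LL E^{(n)},f_k^{(n)}\RR$ to $\sqrt{\varphi(\omega)}$; pointwise convergence $X_n(f^{(n)}_{k_n})\to X(\omega)$; dominated convergence on $[-r,r]$ with a Cauchy--Schwarz tail estimate (the paper's Properties 1 and 2); and Remark \ref{remark_convergent_eigenvalue} for $\Hm$.

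\textbf{Tightness.} The step you call the main obstacle is in fact free, and the bound you propose for it would not work.
\begin{itemize}
\item For every $j\le N_n$ one has $|\LL e_j^{(n)},f_k^{(n)}\RR|=|\LL E^{(n)},f_k^{(n)}\RR|$. Hence $\mu_n(f_k^{(n)})=|\LL E^{(n)},f_k^{(n)}\RR|^2/\|E^{(n)}\|^2$ holds exactly, and Lemma \ref{lemma_uniform_series} is not needed.
\item Your local uniform convergence of $n!\,\mu_n(f_k^{(n)})$ to $\varphi(\omega_k)$ gives $\mu_n(V_r^{(n)})\to\int_{[-r,r)}\varphi\,d\mu$, where $V_r^{(n)}=\{f_k^{(n)}:-r\le\omega_k<r\}$.
\item Each $\mu_n$ is a probability measure and $\int_{\mathbb{R}}\varphi\,d\mu=1$. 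So the tails $\mu_n(\Omega_n\setminus V_r^{(n)})$ are uniformly small for large $r$; the finitely many early $n$ are handled by $V_{n!}^{(n)}=\Omega_n$.
\item The aliasing region near $\pm n!$ is covered automatically. This is exactly the paper's argument.
\end{itemize}
By contrast, a bound of the form $\frac{C}{\sqrt{n!}}\min\bigl(1,|n!\sin(\pi\omega_k/n!)|^{-2}\bigr)$ cannot give tightness. The factor $\sin(\pi\omega_k/n!)$ vanishes at $\omega_k=\pm n!$, so roughly $n!/\pi$ grid points at each end are allowed mass of order $1/n!$. That is an $O(1)$ total, which does not tend to $0$. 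Summation by parts actually produces the half-angle factor $|1-e^{-\pi i\omega_k/n!}|^{-2}$, which would work, but the detour is unnecessary.

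\textbf{Identification of $U$.} You appeal to Plancherel. That is valid and shorter, but it differs from the paper's sketch.
\begin{itemize}
\item The paper derives the formula on $L_1\cap L_2$ by applying Proposition \ref{proposition_converging_isometry} to $\Proj_{H_n}\psi$, together with a Riemann-sum limit of $\sqrt{n!}\LL\psi,f_k^{(n)}\RR$.
\item It derives unitarity from density of $\{e^{-\pi iq\omega}\}_{q\in\mathbb{Q}}$ in $L_2(\mathbb{R},\mu')$, via Remark \ref{remark_dense_gaussian}.
\item The paper presents Plancherel as a consequence of this theorem; your route uses it as an input instead. The complete proof in the appendix goes through the nonstandard Corollary \ref{corollary_nonstandard_invariance}.
\end{itemize}

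\textbf{Step 3.} Two small points.
\begin{itemize}
\item Passing from $\int_Q\Hm h\,d\HMU=\int_{\mathbb{R}}\pi\omega\,(h\circ X)\,d\mu'$ for all $h\in C(Q)$ to $m=\pi\omega$ requires density of $\{h\circ X\}_{h\in C(Q)}$ in $L_2(\mathbb{R},\mu')$. Your surjectivity of $U$ supplies this, since $U(g_j)=V_j\circ X$.
\item In your alternative argument, $g_j\notin C_c^\infty(\mathbb{R})=\Dom(A)$. You must either pass to the closure of $A$ (multiplication by $\Hm$ is closed) or test with a nonzero $z\in C_c^\infty(\mathbb{R})$, whose transform vanishes only on a discrete set.
\end{itemize}
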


Thus, the resulting  isometry is, up to normalization, the Fourier Transform when it is well defined. In fact, one could extend the Fourier Transform $\mathcal{F}:H\rightarrow H$ with $$(\mathcal{F}(\psi))(\omega)=\sqrt{2\varphi(2\omega)}\cdot(U(\psi))(2\omega).$$
Plancherel's Theorem follows directly from $\int_{\mathbb{R}}|U(g)|^2d\mu'=\int_{\mathbb{R}}|g|^2d\mu$, while the differentiation formula follows directly from the previous theorem, at least on $C_c^{\infty}(\mathbb{R})$ and the domain of its closure given by the Sobolev space $\Dom(\bar{A})=H^1(\mathbb{R})$.

As with Theorem \ref{theorem_shift_fourier_series}, we will only outline the important steps of the proof, as we think the full calculations are technical and needless. The reader may complete the details at their convenience. Just like with Theorem \ref{theorem_shift_fourier_series}, a complete proof may be found in  Appendix \ref{appendix_differential}. However, that proof uses nonstandard techniques to link with \cite{nonez2024spectralequivalencesnonstandardsamplings}.

\begin{proof}[Key Steps for the Proof of Theorem \ref{theorem_differential_fourier_transform}]
	In order, we establish the following:
	\begin{itemize}
		\item Using direct calculations, we have, for any $f_k^{(n)}\in\Omega_n,$  that $\mu_n(f_k^{(n)})=\frac{|\LL E^{(n)}, f_k^{(n)}\RR|^2}{\|E^{(n)}\|^2}.$ Furthermore, for any $j\leq N_n$, we have that $$(U_n(e_j^{(n)}))(f_k^{(n)})=e^{-\pi i \frac{\lfloor n!q_j\rfloor}{n!}\frac{k}{n!}}\frac{\LL E^{(n)}, f_k^{(n)}\RR}{|\LL E^{(n)}, f_k^{(n)}\RR|}\|E^{(n)}\|$$ and that $\pi_j (X_n(f_k^{(n)}))=\frac{1}{\sqrt{1-2^{-N_n}}}\frac{1}{\sqrt{2}^j}e^{-\pi i \frac{\lfloor n!q_j\rfloor}{n!}\frac{k}{n!}}\frac{\LL E^{(n)}, f_k^{(n)}\RR}{|\LL E^{(n)}, f_k^{(n)}\RR|}.$
		\item For $n\in\mathbb{N}$, we define $\psi_n:\mathbb{R}\rightarrow\mathbb{K}$ with $$\psi_n(\omega)=\begin{cases}
			\sqrt{n!}\LL E^{(n)},f_{k_n(\omega)}^{(n)}\RR \quad \text{if $\;\omega\in[-n!,n![$,}\\
			0 \quad \text{ otherwise, }
		\end{cases}$$ where $k_n(\omega)$ is the integer in $[-n!^2,n!^2[$ for which $\omega\in[\frac{k_n(\omega)}{n!},\frac{k_n(\omega)+1}{n!}[.$ Using Dominated Convergence Theorem, we have that for $\omega\in\mathbb{R},$ the limit $$\lim_{n\rightarrow\infty}\psi_n(\omega)=\frac{1}{\sqrt{2}}\int_{\mathbb{R}}e^{-\pi i t\omega}E(t)d\mu(t)=\left(\frac{\pi}{2}\right)^{\frac{1}{4}}e^{-\frac{\pi^2\omega^2}{4}}=\sqrt{\varphi(\omega)}$$ holds. 
		\item Since that last quantity is a positive real number, we can deduce that for $\omega\in\mathbb{R}$, $\lim_{n\rightarrow\infty} X_n(f_{k_n(\omega)}^{(n)})=\left(\frac{1}{\sqrt{2}^j}e^{-\pi i q_j \omega}\right)_{j\in\mathbb{N}}=X(\omega).$ 
		\item We can also establish that $(\psi_n)_{n\in\mathbb{N}}$ is uniformly bounded.
		\item We define, for $r>0,$ $V_r^{(n)}=\{f_k^{(n)}\in \Omega_n\;|\; -n!r\leq k <n!r\}.$  Then, we can directly establish the following property, using Dominated Convergence Theorem.
		\item \textbf{Property 1}. Suppose that $(g_n:\Omega_n\rightarrow\mathbb{C})_{n\in\mathbb{N}}$ and $(G_n:\mathbb{R}\rightarrow\mathbb{C})_{n\in\mathbb{N}}$ are function sequences such that for any $r>0,$ $\int_{V_r^{(n)}}g_nd\mu_n=\int_{[-r,r[} G_nd\mu.$ Suppose further that $(G_n)_{n\in\mathbb{N}}$ is uniformly bounded on any compact subset $K\subset\mathbb{R}$, and pointwise convergent to Lebesgue-integrable $G:\mathbb{R}\rightarrow\mathbb{C}.$ Then, for any $r>0$, $\lim_{n\rightarrow\infty}\int_{V_r^{(n)}}g_nd\mu_n=\int_{[-r,r]} Gd\mu.$
		\item Through straightforward calculations, we have that Property 1 applies to the case $g_n=1$, $G_n=\frac{1}{\|E^{(n)}\|^2}|\psi_n|^2,$ with $G=\varphi,$ and thus for any $r>0$ $\lim_{n\rightarrow\infty}\mu_n(V_r^{(n)})=\int_{[-r,r[}\varphi d\mu.$ Since $\lim_{r\rightarrow\infty}\int_{[-r,r[}\varphi d\mu=1,$ since all $\mu_n$ are probability measures and since $V_{n!}^{(n)}=\Omega_n$ holds for any $n$, we have $\mu_n(\Omega_n\setminus V_r^{(n)})\rightarrow 0$ as $r\rightarrow\infty$ uniformly, in the sense that for any $\epsilon>0,$ there exists $R>0$ such that for any $r>R$ and $n\in\mathbb{N},$  $\mu_n(\Omega_n\setminus V_r^{(n)})<\epsilon.$ We can thus show, through Cauchy-Schwarz inequality, the following property.
		\item \textbf{Property 2}. If $(g_n)_{n\in\mathbb{N}},$ $(G_n)_{n\in\mathbb{N}})$ and $G$ are as in Property 1, and if furthermore $\int_{\Omega_n}|g_n|^2d\mu_n$ is uniformly bounded, then $$\lim_{n\rightarrow\infty} \int_{\Omega_n} g_nd\mu_n=\int_{\mathbb{R}} Gd\mu.$$ 
		\item We then establish the convergence of the $A$-sampling-scale sequence. Given $h\in C(Q),$ Property 2 applies to $g_n=h\circ X_n$, $G_n$ given by $G_n(\omega)=\frac{1}{\|E^{(n)}\|^2}h(X_n((f_{k_n(\omega)}^{(n)}))|\psi_n(\omega)|^2$ and $G=(h\circ X)\cdot \varphi.$ Therefore, we have $$\lim_{n\rightarrow\infty}\int_{\Omega_n}(h\circ X_n)d\mu=\int_{\mathbb{R}} (h\circ X)\cdot\varphi d\mu=\int_{\mathbb{R}} (h\circ X)d\mu'.$$ Since  $h\in C(Q)$ is arbitrary, this establishes the convergence of $(S_n)_{n\in\mathbb{N}},$ with $\HMU=X_{\#}(\mu').$ 
		\item We have that for any $j$, $V_j(X(\omega))=e^{-\pi i q_j \omega}$. We can show that $\{V_j\}_{j\in\mathbb{N}}$ is dense-spanning in $L_2(\mathbb{R},\mu')$ by using Remark \ref{remark_dense_gaussian} to show that $\{V_j\cdot\varphi\}$ is dense-spanning in $L_2(\mathbb{R},\mu.)$ This establishes that $U$ is unitary. This also shows that elements of the form $h\circ X$ with $h\in C(Q)$ are dense in $L_2(\mathbb{R},\mu').$
		\item To establish that $$(U(\psi))(\omega)=\frac{1}{\sqrt{2\varphi(\omega)}}\int_{\mathbb{R}}e^{-\pi i \omega t} \psi(t)d\mu(t)$$ for almost all $\omega\in \mathbb{R}$ given $\phi\in L_2(\mathbb{R},\mu)\cap L_1(\mathbb{R},\mu)$, we consider the sequence given by $\phi_n=\Proj_{H_n}\phi.$ One one hand, by Proposition \ref{proposition_converging_isometry}, we know that for any $h\in C(Q),$ $$\lim_{n\rightarrow\infty}\int_{\Omega_n} U_n(\phi_n)\cdot (h\circ X_n)d\mu_n=\int_{Q}\HU(\phi)\cdot hd\HMU=\int_{\mathbb{R}}U(\phi)\cdot (h\circ X)\cdot\varphi d\mu.$$ On the other hand, noting $\int_{\Omega_n}|U_n(\phi_n)|^2d\mu_n=\|\phi_n\|^2\leq \|\phi\|^2,$ we have that Property 2 applies to $g_n=U_n(\phi_n)\cdot(h\circ X_n)$ and $G_n$ being given by $G_n(\omega)=\frac{1}{\|E^{(n)}\|}\sqrt{n!}\LL \phi, f_{k_n(\omega)}^{(n)}\RR\cdot h(X_n(f_{k_n(\omega)}^{(n)})\cdot|\psi_n(\omega)|.$ Stemming from the Dominated Convergence theorem and the integrability of $\phi$ on $\mathbb{R}$, the pointwise limit $G$ of $G_n$ is given by $$G(\omega)=\frac{1}{\sqrt{2}}F_{\phi}(\omega)\cdot h(X(\omega))\cdot\sqrt{\varphi(\omega)},$$ where $F_{\phi}(\omega)=\int_{\mathbb{R}}e^{-\pi i \omega t}\phi(t)d\mu(t).$ We thus get $$\lim_{n\rightarrow\infty}\int_{\Omega_n} U_n(\phi_n)\cdot (h\circ X_n)d\mu_n=\int_{\mathbb{R}}Gd\mu,$$ and so $$\int_{\mathbb{R}}U(\phi)\cdot (h\circ X)\cdot\varphi d\mu=\int_{\mathbb{R}}\frac{1}{\sqrt{2}}F_{\phi}\cdot (h\circ X)\cdot \sqrt{\varphi}d\mu.$$ By considering complex measure $\nu$ with $d\nu=(U(\phi)\varphi-\frac{1}{\sqrt{2}}F_{\phi}\sqrt{\phi})d\mu,$ the last equation becomes $\int_Qh dX_{\#}(\nu)=0$ for any $h\in C(Q).$ By uniqueness given by the Riesz-Markov Theorem, we have $X_{\#}(\nu)=0,$ and since $X$ is injective, we have $\nu=0.$ Thus, for almost all $\omega\in\mathbb{R},$ we have $$(U(\phi))(\omega)=\frac{1}{\sqrt{2\phi}}\int_{\mathbb{R}}e^{-\pi i \omega t}\phi(t)d\mu(t).$$
		\item That $U(Az)=m\cdot U(z)$ directly follows from $\HU(Az)=\Hm\cdot\HU(z)$ for any $z\in\Dom(A).$ The last step is thus to show that $m(\omega)=\pi\omega$ for almost all $\omega\in\mathbb{R}.$ Thanks to Remark \ref{remark_convergent_eigenvalue}, we know that for any $h\in C(Q),$ $$\lim_{n\rightarrow\infty} \int_{\Omega_n}\lambda_n\cdot (h\circ X_n)d\mu_n=\int_Q \Hm\cdot hd\HMU=\int_{\mathbb{R}}m\cdot (h\circ X) d\mu'.$$ Noting that $\int_{\Omega_n}|\lambda_n|^2d\mu_n=\frac{\|A_n E^{(n)}\|^2}{\|E^{(n)}\|^2}$, which is bounded as a convergent sequence, we have that Property to applies to $g_n=\lambda_n\cdot (h\circ X_n),$ $G_n$ being given by $G_n(\omega)=\frac{1}{\|E^{(n)}\|^2}n! \sin(\pi\frac{k_n(\omega)}{n!^2})\cdot h(X_n(f_{k_n(\omega)}))\cdot|\psi_n|^2$ and $G$ given by $G(\omega)=\pi\omega \cdot h(X(\omega))\cdot\varphi.$ Thus, we have $\lim_{n\rightarrow\infty} \int_{\Omega_n}\lambda_n\cdot (h\circ X_n)d\mu_n=\int_{\mathbb{R}}\pi\omega\cdot h(X(\omega))\cdot\varphi d\mu$, and so $$\LL m, h\circ X\RR_{\mu'}=\int_{\mathbb{R}}m\cdot (h\circ X) d\mu'=\int_{\mathbb{R}}\pi\Id_{\mathbb{R}}\cdot (h\circ X)d\mu'=\LL \pi\Id_{\mathbb{R}}, h\circ X \RR_{\mu'}. $$ By density of $h\circ X$, we conclude that $m=\pi\Id_{\mathbb{R}}$ in $L_2(\mathbb{R},\mu').$
	\end{itemize}
\end{proof}
\appendix
\section{Nonstandard characterization}\label{appendix_nonstandad}

In this section, we connect the work done in \cite{nonez2024spectralequivalencesnonstandardsamplings} with the work done here, in order to simplify some of the technicalities with the examples and avoid redundancy. In this section, we will use nonstandard techniques and terminology. The uninitiated reader may familiarize themselves using \cite{nsarecent} or \cite{arkeryd2012nonstandard}. We further encourage familiarity with the techniques used in \cite{nonez2024spectralequivalencesnonstandardsamplings} or \cite{goldbring2025nonstandardapproachdirectintegral}. 

We now assume that ${}^*$ is an countably saturated extension of some superstructure containing at least $H$. When $o$ is an standard object or function, we may note the extension ${}^*o$ by $o$  to lighten the notation. However, for standard sets, we will not use the same shortcut to avoid any ambiguities (for example, in "for any $\epsilon\in\mathbb{R}_{>0}$" we mean for any standard such $\epsilon>0$).

We first begin by restating the definition of the \textbf{nonstandard} sampling and compatible standard-biased scale found as in \cite{nonez2024spectralequivalencesnonstandardsamplings}.

\begin{defn}\label{definition_nonstandard_sampling}
		The internal triplet $(\tilde{H},\tilde{A},\tilde{\Omega})$ is a sampling for $A$ if:
	
	\begin{enumerate}
		\item \label{property_nonstandard_dimension} $\tilde{H}<{}^*H$, and ${}^*\operatorname{dim}(\tilde{H})\in{}^*\mathbb{N}$. 
		\item \label{property_nonstandard_symmetric} $\tilde{A}:\tilde{H}\rightarrow\tilde{H}$ is a ${}^*$linear symmetric operator on $\tilde{H}$.
		\item \label{property_nonstandard_eigenbasis} $\tilde{\Omega}\subset\tilde{H}$ is an orthonormal ${}^*$eigenbasis of $\tilde{A}$.
		
		\item \label{property_nonstandard_approximation} $G(A)\subset$ st$(G(\tilde{A}))$. In other words, for any $x\in \operatorname{dom}(A)$, there exists $\tilde{x}\in \tilde{H}$ such that $x=$st$(\tilde{x})$ and $Ax=$st$(\tilde{A}\tilde{x})$.
		\setcounter{pointnumber}{\value{enumi}}
	\end{enumerate}
	
	Furthermore, given infinite $\tilde{N}\in{}^*\mathbb{N},$ the hyperfinite sequence in ${}^*H\times {}^*\mathbb{R}_{\geq0}$ given by $(\tilde{e}_j,\tilde{c}_j)_{j=1}^{\tilde{N}}$ is a standard biased scale if:
	
	\begin{enumerate}
		\setcounter{enumi}{\value{pointnumber}}
		\item\label{property_nonstandard_scale_NS} for each $j\in\mathbb{N}$, both $\tilde{e}_j$ and $\tilde{c}_j$ are nearstandard with $\St(\tilde{e}_j)\in H\setminus\{0\}$ and $\St(\tilde{c}_j)\in\mathbb{R}_{>0}.$
		\item\label{property_nonstandard_scale_dense} $(\St(\tilde{e}_j))_{j\in\mathbb{N}}$ spans a dense subset in $H$.
		\item \label{property_nonstandard_scale_proba} $\sum_{j=1}^{\tilde{N}}\tilde{c}_j\|\tilde{e}_j\|^2=1$.
		\item \label{property_nonstandard_scale_bias} $\sum_{j\in\mathbb{N}}\St(\tilde{c}_j)\|\St(\tilde{e}_j)\|^2=1$.
		
		\setcounter{pointnumber}{\value{enumi}}
	\end{enumerate}
	
	Finally, we say that this scale is compatible with sampling $(\tilde{H},\tilde{A},\tilde{\Omega})$ if :
	\begin{enumerate}
		\setcounter{enumi}{\value{pointnumber}}
		\item \label{property_nonstandard_compat_H} For each $j\leq\tilde{N}$, $\tilde{e}_j\in\tilde{H}$.
		\item \label{property_nonstandard_compat_A} For each $j\in \mathbb{N}$, $\tilde{A}\tilde{e}_j$ is nearstandard.
		\item \label{property_nonstandard_compat_Omega} For each $f\in\tilde{\Omega}$, there exists $j\leq\tilde{N}$ such that $\tilde{c}_j>0$ and $(\tilde{e}_j,f)\neq0$.
	\end{enumerate}
\end{defn}

We can now show the first connection between sampling-scale-sequences and nonstandard samplings. It is an intuitive one, as it was the latter that inspired the former.

\begin{prop}
	Let $(S_n)_{n\in\mathbb{N}}=(H_n,A_n,\Omega_n, (e_j^{(n)},c_j^{(n)})_{j=1}^{N_n})_{n\in\mathbb{N}}$ be a \textbf{strong} A-sampling-scale sequence. Then, for any infinite $K\in{}^*\mathbb{N},$ we have that $(H_K, A_K, \Omega_K)$ is a sampling for $A$ for which $(e_j^{(K)},c_j^{(K)})_{j=1}^{N_K}$ is a compatible standard-biased scale, with $e_j=\St(e_j^{(K)})$ and $c_j=\St(c_j^{(K)}).$
\end{prop}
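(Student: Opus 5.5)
The plan is to transfer each standard property of the sequence to the $K$-th term by the transfer principle, and to use overspill/nonstandard characterizations of limits for the properties that involve limits. First, Properties (\ref{property_sampling_finite_dim}), (\ref{property_sampling_symmetric_op}), (\ref{property_sampling_eigenbasis}) of Definition \ref{definition_sampling} hold for every $n\in\mathbb{N}$. Hence, by transfer, they hold for every $n\in{}^*\mathbb{N}$, in particular for $K$. This gives Properties (\ref{property_nonstandard_dimension})--(\ref{property_nonstandard_eigenbasis}) of Definition \ref{definition_nonstandard_sampling}. The same transfer argument applied to Properties (\ref{property_scale_elements}), (\ref{property_scale_proba}), (\ref{property_compat_H}) and (\ref{property_compat_Omega}) yields the following facts directly:
\begin{itemize}
\item $\sum_{j=1}^{N_K}c_j^{(K)}\|e_j^{(K)}\|^2=1$;
\item $e_j^{(K)}\in H_K$;
\item every $f\in\Omega_K$ has some $j\le N_K$ with $\LL e_j^{(K)},f\RR\neq 0$, and $c_j^{(K)}>0$ holds automatically.
\end{itemize}
Moreover, $N_K$ is infinite by Property (\ref{property_scale_infinity}), since a standard sequence tending to infinity takes infinite values at infinite indices.

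Next come the properties involving limits. For Property (\ref{property_nonstandard_approximation}), fix $x\in\Dom(A)$ and take the standard sequence $(x_n)$ given by Property (\ref{property_sampling_approx}). By the nonstandard characterization of sequential limits, $x_K\approx x$ and $A_Kx_K\approx Ax$. Since $x, Ax\in H$ are standard, we get $\St(x_K)=x$ and $\St(A_Kx_K)=Ax$, so $\tilde x=x_K$ works. In the same way, Property (\ref{property_scale_limits}) gives, for each standard $j$, that $e_j^{(K)}\approx e_j\in H\setminus\{0\}$ and $c_j^{(K)}\approx c_j\in\mathbb{R}_{>0}$. Note that $j\le N_K$ automatically, since $N_K$ is infinite. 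This proves Property (\ref{property_nonstandard_scale_NS}) together with $e_j=\St(e_j^{(K)})$ and $c_j=\St(c_j^{(K)})$. Properties (\ref{property_nonstandard_scale_dense}) and (\ref{property_nonstandard_scale_bias}) are then exactly Properties (\ref{property_scale_dense}) and (\ref{property_scale_bias}), rewritten with these standard parts.

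The remaining condition is Property (\ref{property_nonstandard_compat_A}), and this is the only point where strongness is needed. By Property (\ref{property_compat_strong}), for each standard $j$ the sequence $(A_ne_j^{(n)})_n$ converges to some $y_j\in H$. This is a standard sequence indexed by $n\ge j$, so $A_Ke_j^{(K)}\approx y_j$, and hence $A_Ke_j^{(K)}$ is nearstandard. Without strongness, Property (\ref{property_compat_A}) would only give that $A_Ke_j^{(K)}$ is bounded, which need not be nearstandard in infinite dimension. I expect no real obstacle here; the only delicate points are bookkeeping ones. One must make sure each sequence used is a standard object, so that its nonstandard extension at $K$ makes sense. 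One must also treat the partial definition ($j\le N_n$) of the indexed families, for example by extending them by $0$ for $j>N_n$ before transferring.
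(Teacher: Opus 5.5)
Your proposal is correct and follows essentially the same route as the paper's proof. Transfer handles the first-order properties: finite-dimensionality, symmetry, the eigenbasis, $\sum_{j=1}^{N_K}c_j^{(K)}\|e_j^{(K)}\|^2=1$, $e_j^{(K)}\in H_K$, and the non-orthogonality condition on $\Omega_K$. The nonstandard characterization of limits at the infinite index $K$ handles the approximation property, $e_j=\St(e_j^{(K)})$, $c_j=\St(c_j^{(K)})$, and, via strongness, the nearstandardness of $A_Ke_j^{(K)}$.
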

\begin{proof}
	We first show that $(H_K, A_K, \Omega_K)$ is a sampling for $A$.
	
	\textbf{Properties (\ref{property_nonstandard_dimension}), (\ref{property_nonstandard_symmetric}) and (\ref{property_nonstandard_eigenbasis})}: Direct application of the Transfer Principle to Properties (\ref{property_sampling_finite_dim}),(\ref{property_sampling_symmetric_op}) and (\ref{property_sampling_eigenbasis})  of Definition \ref{definition_sampling} respectively.
	
	\textbf{Property (\ref{property_nonstandard_approximation})}: For $x\in\Dom(A)$, let $(x_n)_{n\in\mathbb{N}}$ be a sequence given by Property (\ref{property_sampling_approx}) of Definition \ref{definition_sampling}, so that each $x_n\in H_n$, $x=\lim_{n\rightarrow\infty}x_n=x$ and $Ax=\lim_{n\rightarrow\infty}A_nx_n.$ Let $\tilde{x}={}^*x_K$. Then, by Transfer Principle, we have $\tilde{x}\in H_K$. Since $K$ is infinite we obtain $\tilde{x}\simeq x$ and $A_K \tilde{x}\simeq Ax$ by nonstandard characterization of limits.
	
	Then, we show that $(e_j^{(K)},c_j^{(K)})_{j=1}^{N_K}$ is a standard-biased scale. We first note that for all $n\in\mathbb{N}$ and $j\leq N_n$, $(e_j^{(n)},c_j^{(n)})\in H\times \mathbb{R}_{>0}$ by Property (\ref{property_scale_elements}) of Definition \ref{definition_scale}. Thus, by Transfer Principle, for any $j\leq N_K,$ $(e_j^{(K)},c_j^{(K)})\in {}^*H\times {}^*\mathbb{R}_{>0}$. We also note that by Property (\ref{property_scale_infinity}) of the same definition, $N_K$ is infinite.
	
	\textbf{Property (\ref{property_nonstandard_scale_NS})}: By Property (\ref{property_scale_limits}) of Definition \ref{definition_scale}, we have $e_j=\lim_{n\rightarrow\infty}e_j^{(n)}$ and $c_j=\lim_{n\rightarrow\infty} c_j^{(n)}.$ By nonstandard characterization of limits, $e_j^{(K)}\simeq e_j\in H\setminus\{0\}$ and $c_j^{(K)}\simeq c_j\in\mathbb{R}_{>0}.$
	
	\textbf{Properties (\ref{property_nonstandard_scale_dense}) and (\ref{property_nonstandard_scale_bias})} are thus the respective statements of Properties (\ref{property_scale_dense}) and (\ref{property_scale_bias}).
	
	\textbf{Property (\ref{property_nonstandard_scale_proba})}: Direct application of the Transfer Principle to Property (\ref{property_scale_proba}) of Definition \ref{definition_scale}.
	
	Now, we conclude the proof by showing compatibility.
	
	\textbf{Properties (\ref{property_nonstandard_compat_H}) and (\ref{property_nonstandard_compat_Omega})}: Direct application of the Transfer Principle to Properties (\ref{property_compat_H}) and (\ref{property_compat_Omega}) of Definition \ref{definition_sampling_scale_sequence}. We note that $c_j^{(K)}>0$ for all $j\leq N_K$.
	
	\textbf{Property (\ref{property_nonstandard_compat_A})}: Since the $A$-sampling-scale sequence is strong, we have Property (\ref{property_compat_strong}) of Definition \ref{definition_sampling_scale_sequence}, and so for any $j\in\mathbb{N},$ $\lim_{n\rightarrow\infty}A_n e_j^{(n)}\in H$ exists. By nonstandard characterization of limits, we conclude $A_Ke_j^{(K)}$ is nearstandard for any such $j\in\mathbb{N}$, with $\St\left(A_K e_j^{(K)}\right)=\lim_{n\rightarrow\infty}A_n e_j^{(n)} $.
\end{proof}

We now fix $(S_n)_{n\in\mathbb{N}}=(H_n,A_n,\Omega_n, (e_j^{(n)},c_j^{(n)})_{j=1}^{N_n})_{n\in\mathbb{N}}$ as any strong $A$-sampling-scale sequence. We do not assume its convergence. 

Then, given infinite $K\in{}^*\mathbb{N}$, we define the main induced objects from the nonstandard sampling $(H_K,A_K,\Omega_K)$ and the compatible scale $(e_j^{(K)}, c_j^{(K)})_{j=1}^{N_K}$, as constructed in \cite{nonez2024spectralequivalencesnonstandardsamplings}.

\begin{defn}\label{definition_nonstandard_internal_measure}
	Let $\tilde{\mu}^{(K)}:{}^*\mathcal{P}(\Omega_K)\rightarrow{}^*\mathbb{R}_{\geq0}$ be the internal probability measure given by $\tilde{\mu}^{(K)}(V)=\sum_{j=1}^{N_K}c_j^{(K)}\|\Proj_{\Span(V)}e_j^{(K)}\|^2$, and let $(\Omega_K,\mathcal{A}_L^{(K)},\mu_L^{(K)})$ be the induced Loeb probability space. Furthermore, let $\tilde{U}^{(K)}:{}^*H\rightarrow {}^*L_2(\Omega_K,\tilde{\mu}^{(K)})$ be given by $\left(\tilde{U}^{(K)}(x)\right)(f)=\frac{\LL x, f\RR}{\sqrt{\tilde{\mu}^{(K)}(f)}},$ where $\tilde{\mu}^{(K)}(f)=\tilde{\mu}^{(K)}(\{f\})>0.$ Then, let $\tilde{d}^{(K)}$ be the pseudometric on $\Omega_K$ given by $$\tilde{d}^{(K)}(f_1, f_2)=\sum_{j=1}^{N_K}\left(c_j^{(K)}\right)^{\frac{3}{2}}\|e_j^{(K)}\|^2\left|\left(\tilde{U}^{(K)}(e_j^{(K)})\right)(f_1)-\left(\tilde{U}^{(K)}(e_j^{(K)})\right)(f_2)\right|,$$ and let $\sim$ be the equivalence relation on $\Omega_K$ be given by $f_1\sim f_2$ when $\tilde{d}^{(K)}(f_1,f_2)\simeq 0.$ Then, let $\HOM^{(K)}=\Omega_K/\sim$, inducing the natural map $\HNU^{(K)}:\Omega_K\rightarrow\HOM^{(K)}.$
\end{defn}

We now summarize the main results of \cite{nonez2024spectralequivalencesnonstandardsamplings} applied here in the following theorems. In all of them, $K$ is any arbitrary infinite hypernatural. 

\begin{prop}\label{proposition_nonstandard_limited_isometry}
	For any finite $x\in {}^*H$, $(\tilde{U}^{(K)}(x))(f)$ is finite for almost all $f\in\Omega_K$ with respect to $\mu_L^{(K)},$ with specific uniform bound $|\tilde{U}(e_j^{(K)})|^2\leq\frac{1}{c_j^{(K)}}$ for any standard $j\in\mathbb{N}$. Furthermore, $\lambda_K:\Omega_K\rightarrow{}^*\mathbb{R},$ the eigenvalue function of $A_K$, is also $\mu_L^{(K)}$-almost-everywhere finite.
\end{prop}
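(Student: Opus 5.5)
The plan is to reduce everything to $L_2$ bounds for internal functions on the hyperfinite space $(\Omega_K,\tilde{\mu}^{(K)})$, and then to the standard fact that an internal function with finite ${}^*L_2$ norm is finite Loeb-almost everywhere. That fact is an internal Chebyshev argument. If $g$ is internal with $\int_{\Omega_K}|g|^2d\tilde{\mu}^{(K)}\leq M$ for some finite $M$, then for every standard $r>0$ the internal set $\{f\;|\;|g(f)|>r\}$ has internal measure at most $M/r^2$. Taking the countable intersection over $r\in\mathbb{N}$, the set where $g$ is infinite is Loeb measurable with Loeb measure at most $\inf_r \St(M)/r^2=0$.

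First, the isometry. By transfer of Proposition \ref{proposition_finite_isometry}, $\tilde{U}^{(K)}$ restricted to $H_K$ is a ${}^*$unitary map onto ${}^*L_2(\Omega_K,\tilde{\mu}^{(K)})$. For a general finite $x\in{}^*H$, the formula only sees $\Proj_{H_K}x$, so $\int|\tilde{U}^{(K)}(x)|^2d\tilde{\mu}^{(K)}=\|\Proj_{H_K}x\|^2\leq\|x\|^2$, which is finite. The Chebyshev fact above then gives that $(\tilde{U}^{(K)}(x))(f)$ is finite for $\mu_L^{(K)}$-almost all $f$. The uniform bound $|\tilde{U}^{(K)}(e_j^{(K)})(f)|^2\leq 1/c_j^{(K)}$ is the transfer of the last inequality in Proposition \ref{proposition_finite_isometry}, valid for all $j\leq N_K$ and all $f$. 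For standard $j$, Property (\ref{property_scale_limits}) gives $c_j^{(K)}\simeq c_j>0$, so this bound is itself finite.

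Second, the eigenvalue function. The computation in the proof of Theorem \ref{theorem_multiplication_operator} shows that, for every $n$,
$$\int_{\Omega_n}|\lambda_n|^2d\mu_n=\sum_{j=1}^{N_n}c_j^{(n)}\|A_ne_j^{(n)}\|^2<C$$
for the standard constant $C$ of Property (\ref{property_compat_A}). Transferring this statement, which is quantified over all $n\in\mathbb{N}$, to $n=K$ gives $\int_{\Omega_K}|\lambda_K|^2d\tilde{\mu}^{(K)}<C$. The Chebyshev fact then yields that $\lambda_K$ is finite $\mu_L^{(K)}$-almost everywhere.

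The only delicate point is Loeb measurability of the set of infinite values. It is handled by writing that set as the countable intersection $\bigcap_{r\in\mathbb{N}}\{f\;|\;|g(f)|>r\}$ of internal sets, whose Loeb measures decrease to $0$. Notably, strongness of the sequence is not needed for this proposition; Property (\ref{property_compat_A}) suffices.
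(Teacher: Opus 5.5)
Your proof is correct, but it does not follow the paper's route, because the paper gives no argument for this proposition. It imports the result from \cite{nonez2024spectralequivalencesnonstandardsamplings}. What the paper actually checks, in the preceding proposition, is that $(H_K,A_K,\Omega_K)$ with $(e_j^{(K)},c_j^{(K)})_{j=1}^{N_K}$ is a nonstandard sampling with compatible standard-biased scale. Strongness is used exactly there, to get Property (\ref{property_nonstandard_compat_A}), which only asks that $A_Ke_j^{(K)}$ be nearstandard for each standard $j$. Those general axioms do not bound $\tilde c_j\|\tilde A\tilde e_j\|^2$ for nonstandard $j$. So the internal $L_2$ norm of the eigenvalue function, $\sum_{j\le\tilde N}\tilde c_j\|\tilde A\tilde e_j\|^2$, may be infinite, and the general result cannot come from a global Chebyshev bound. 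It requires a more delicate argument, for instance using that $\int_{\Omega_K}|\lambda_K|^2|\tilde U^{(K)}(e_j^{(K)})|^2\,d\tilde\mu^{(K)}=\|A_Ke_j^{(K)}\|^2$ is finite for each standard $j$, together with the bias condition, in the spirit of Proposition \ref{proposition_null_origin}. You instead exploit the uniform bound of Property (\ref{property_compat_A}), which is special to this paper's standard definition, by transferring the identity from the proof of Theorem \ref{theorem_multiplication_operator}. That gives you a short, self-contained argument which, as you note, does not need strongness. It also gives the quantitative bound $\int_{\Omega_K}|\lambda_K|^2\,d\tilde\mu^{(K)}<C$, so $\St\circ\lambda_K$ has $L_2(\mu_L^{(K)})$-norm at most $\sqrt{C}$ by the Loeb--Fatou inequality. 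The paper's route, in exchange, rests on a theorem valid for arbitrary nonstandard samplings, not only those arising as ${}^*$-extensions of standard sequences with a uniform bound. It also imports Theorems \ref{theorem_nonstandard_spectral_Loeb} and \ref{theorem_nonstandard_spectral_hull} in the same stroke.
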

\begin{thm}\label{theorem_nonstandard_spectral_Loeb}	
	The operator $U_L^{(K)}:H\rightarrow L_2(\Omega_K,\mu_L^{(K)})$ defined by $U_L^{(K)}(x)=\St\circ \left(\tilde{U}^{(K)}(x)\right)$  is a linear isometry. Furthermore, for any $x\in\Dom(A),$ we have $U_L^{(K)}(Ax)=m_L^{(K)}\cdot U_L^{(K)}(x),$ where $m_L^{(K)}=\St\circ\lambda_K.$
\end{thm}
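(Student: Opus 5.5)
The plan is to reduce the theorem to standard Loeb-measure facts together with Proposition \ref{proposition_nonstandard_limited_isometry}. First, I will check that $U_L^{(K)}$ is well defined. For finite $x\in{}^*H$, and in particular standard $x\in H$, Proposition \ref{proposition_nonstandard_limited_isometry} gives that $\St\circ\tilde{U}^{(K)}(x)$ is defined $\mu_L^{(K)}$-almost everywhere. It is Loeb measurable, being the standard part of an internal function. Linearity is immediate, since $\tilde{U}^{(K)}$ is ${}^*$linear and $\St$ is additive on finite values.

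For the isometry I will show that $\tilde{U}^{(K)}(x)$ is $S$-integrable in the Loeb sense; then the standard part of $\int_{\Omega_K}|\tilde{U}^{(K)}(x)|^2d\tilde{\mu}^{(K)}=\|x\|^2$ equals $\int|U_L^{(K)}(x)|^2d\mu_L^{(K)}$, where the left equality is the transfer of Proposition \ref{proposition_finite_isometry}. The order of the argument is as follows. On the generators, Proposition \ref{proposition_nonstandard_limited_isometry} bounds $\tilde{U}^{(K)}(e_j^{(K)})$ uniformly by $1/\sqrt{c_j^{(K)}}$, which is finite. Bounded internal functions are $S$-integrable, and so are their squares. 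Hence for every standard finite combination $y=\sum_{j\leq k}a_je_j$ the internal function $\tilde{U}^{(K)}(\sum a_je_j^{(K)})$ is bounded, and $\|U_L^{(K)}(y')\|=\|y\|$, where $y'$ is taken with $e_j^{(K)}\simeq e_j$. For general $x\in H$ I will approximate it by such $y$ with $\|x-y\|<\epsilon$. Transfer of unitarity then gives $\|\tilde{U}^{(K)}(x-\sum a_je_j^{(K)})\|_{L_2(\tilde\mu)}\lesssim\epsilon$. Fatou's lemma for Loeb measures, in the form $\int\St|F|^2d\mu_L\leq\St\int|F|^2d\tilde\mu$, shows that $U_L^{(K)}$ restricted to the standard span is $1$-Lipschitz into $L_2(\mu_L^{(K)})$. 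A density argument then extends the norm identity to all of $H$. I expect this uniform-integrability/Fatou bookkeeping to be the main technical obstacle.

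For the multiplication identity, fix $x\in\Dom(A)$ and pick $\tilde x=x_K\in H_K$ with $\tilde x\simeq x$ and $A_K\tilde x\simeq Ax$, by the preceding proposition. By the transfer of Proposition \ref{proposition_finite_isometry}, $\lambda_K\cdot\tilde U^{(K)}(\tilde x)=\tilde U^{(K)}(A_K\tilde x)$ pointwise on $\Omega_K$. By the isometry just established, together with the infinitesimal estimate $\|\tilde U^{(K)}(\tilde x-x)\|\simeq0$ and Fatou, we get $\St\tilde U^{(K)}(\tilde x)=U_L^{(K)}(x)$ almost everywhere. Likewise $\St\tilde U^{(K)}(A_K\tilde x)=U_L^{(K)}(Ax)$ almost everywhere. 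Since $\lambda_K$ is finite almost everywhere (Proposition \ref{proposition_nonstandard_limited_isometry}), taking standard parts of both sides at almost every $f$ gives $m_L^{(K)}\cdot U_L^{(K)}(x)=U_L^{(K)}(Ax)$ almost everywhere, which completes the proof.
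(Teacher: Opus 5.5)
Your proof is correct. The paper gives no proof of its own here: this theorem is quoted from \cite{nonez2024spectralequivalencesnonstandardsamplings}, along with Propositions \ref{proposition_nonstandard_limited_isometry} and \ref{proposition_nonstandard_hull_compact}, so there is no in-text argument to compare against line by line.

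What you give is a self-contained derivation that rests on two standard Loeb facts. The first is that internal functions with a finite bound are $S$-integrable. The second is the inequality $\int\St F\,d\mu_L^{(K)}\leq\St\int F\,d\tilde{\mu}^{(K)}$ for internal $F\geq 0$. Structurally, your argument is the nonstandard counterpart of the paper's classical route. You check the norm identity on the generators $e_j$ using the bound $1/c_j^{(K)}$, then extend by density, as in Proposition \ref{proposition_unique_isometry}. Loeb--Fatou takes the place of the estimate $\|g\|\leq\liminf\|g_n\|$ in Proposition \ref{proposition_L2_limit} and of the $\epsilon$-approximation in Proposition \ref{proposition_converging_isometry}.

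Two refinements are worth making.

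First, for standard $x$ you generally have ${}^*x\notin H_K$, so the estimate you actually need is Bessel's inequality rather than unitarity: $\|\tilde{U}^{(K)}(z)\|_{L_2(\tilde{\mu}^{(K)})}=\|\Proj_{H_K}z\|\leq\|z\|$ for $z\in{}^*H$. Combined with Fatou, this makes $U_L^{(K)}$ $1$-Lipschitz on all of $H$, not just on the span, which is exactly what your density step uses. It also gives a.e.\ finiteness of $\tilde{U}^{(K)}(x)$ without citing anything.

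Second, you can avoid Proposition \ref{proposition_nonstandard_limited_isometry} for $\lambda_K$ as well. Transferring Property (\ref{property_compat_A}), together with the computation at the start of the proof of Theorem \ref{theorem_multiplication_operator}, gives $\int_{\Omega_K}|\lambda_K|^2\,d\tilde{\mu}^{(K)}<C$. Fatou then yields $m_L^{(K)}\in L_2(\Omega_K,\mu_L^{(K)})$, and in particular $m_L^{(K)}$ is finite a.e. With this change your argument never uses strongness, so the statement holds for every $A$-sampling-scale sequence. Note that this shortcut depends on the uniform bound (\ref{property_compat_A}). Definition \ref{definition_nonstandard_sampling} does not assume such a bound; it only requires $\tilde{A}\tilde{e}_j$ to be nearstandard.
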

\begin{prop}\label{proposition_nonstandard_hull_compact}	
	We have that $d^{(K)}(\HNU^{(K)}(f_1),\HNU^{(K)}(f_2))=\St(\tilde{d}^{(K)}(f_1,f_2))$ induces a well-defined metric on $\HOM^{(K)}.$ Under that metric, $\HOM^{(K)}$ a compact metric space for which the natural map $\HNU^{(K)}:(\Omega_K,\mathcal{A}_L^{(K)})\rightarrow(\HOM^{(K)},\Borel(\HOM^{(K)}))$ is a measurable function. Furthermore, given any $f_1,f_2\in\Omega_K,$ we have that $\HNU^{(K)}(f_1)=\HNU^{(K)}(f_2)$ if and only if $(\tilde{U}^{(K)}(e_j^{(K)}))(f_1)\simeq(\tilde{U}^{(K)}(e_j^{(K)}))(f_2)$ for all $j\in\mathbb{N}.$
\end{prop}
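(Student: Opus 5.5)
The plan is to work directly from the internal definition of $\tilde{d}^{(K)}$, control its tail by a transferred version of Lemma \ref{lemma_uniform_series}, and then identify $\HOM^{(K)}$ isometrically with a closed subset of $Q$ under a suitable weighted metric.

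\textbf{Finiteness of $\tilde{d}^{(K)}$.} By the transferred bound of Proposition \ref{proposition_nonstandard_limited_isometry}, $|(\tilde{U}^{(K)}(e_j^{(K)}))(f)|\leq (c_j^{(K)})^{-1/2}$ holds for every $j\leq N_K$. This is the internal version of the last inequality of Proposition \ref{proposition_finite_isometry}. Hence the $j$-th term of $\tilde{d}^{(K)}(f_1,f_2)$ is at most $2c_j^{(K)}\|e_j^{(K)}\|^2$. By the transfer of Property (\ref{property_scale_proba}) this gives $\tilde{d}^{(K)}\leq 2$, so $\tilde{d}^{(K)}$ is finite.

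\textbf{Well-definedness of $d^{(K)}$.} Since $\tilde{d}^{(K)}$ is an internal pseudometric, $\sim$ is an equivalence relation. The triangle inequality then shows that $\St(\tilde{d}^{(K)}(f_1,f_2))$ depends only on the classes of $f_1,f_2$. The result is a pseudometric on $\HOM^{(K)}$ that vanishes only on the diagonal, i.e.\ a metric.

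\textbf{Tail estimate and the characterization.} Fix a standard $\epsilon>0$ and the standard $J_\epsilon$ of Lemma \ref{lemma_uniform_series}. The bound there holds for all $n\in\mathbb{N}$, so by transfer
\[
\sum_{J_\epsilon<j\leq N_K}c_j^{(K)}\|e_j^{(K)}\|^2<\epsilon .
\]
Consequently the part of $\tilde{d}^{(K)}(f_1,f_2)$ with $j>J_\epsilon$ is below $2\epsilon$, uniformly in $f_1,f_2$. Suppose $(\tilde{U}^{(K)}(e_j^{(K)}))(f_1)\simeq(\tilde{U}^{(K)}(e_j^{(K)}))(f_2)$ for every standard $j$. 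Then the finitely many standard terms with $j\leq J_\epsilon$ are infinitesimal, so $\tilde{d}^{(K)}(f_1,f_2)<3\epsilon$; as $\epsilon$ is arbitrary, $f_1\sim f_2$. Conversely, for a standard $j$ the coefficient $(c_j^{(K)})^{3/2}\|e_j^{(K)}\|^2\simeq c_j^{3/2}\|e_j\|^2>0$ is not infinitesimal. Hence $\tilde{d}^{(K)}\simeq0$ forces each standard coordinate difference to be infinitesimal. This proves the "if and only if" statement.

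\textbf{Isometric embedding into $Q$.} Define $\Psi:\HOM^{(K)}\to Q$ by
\[
\Psi(\HNU^{(K)}(f))=\Bigl(\St\bigl(\sqrt{c_j^{(K)}}\,(\tilde{U}^{(K)}(e_j^{(K)}))(f)\bigr)\Bigr)_{j\in\mathbb{N}} .
\]
The coordinates lie in $\overline{\mathbb{D}}$ by the uniform bound above, and $\Psi$ is well defined and injective by the characterization. Equip $Q$ with
\[
\rho(y,y')=\sum_{j\in\mathbb{N}}c_j\|e_j\|^2|y_j-y'_j| .
\]
Since the weights are positive and summable and the coordinates are bounded, $\rho$ metrizes the product topology. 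Rewrite $\tilde{d}^{(K)}=\sum_j c_j^{(K)}\|e_j^{(K)}\|^2\,|\sqrt{c_j^{(K)}}\,\tilde{U}(e_j^{(K)})(f_1)-\sqrt{c_j^{(K)}}\,\tilde{U}(e_j^{(K)})(f_2)|$. The tail estimate, together with $c_j^{(K)}\simeq c_j$ and $e_j^{(K)}\simeq e_j$ for standard $j$, then shows that $\St\tilde{d}^{(K)}(f_1,f_2)=\rho(\Psi\HNU^{(K)}(f_1),\Psi\HNU^{(K)}(f_2))$. Thus $\Psi$ is an isometry onto its image, and compactness of $\HOM^{(K)}$ reduces to closedness of $\Psi(\HOM^{(K)})$ in the compact space $(Q,\rho)$.

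\textbf{Closedness (expected main obstacle).} Let $y\in Q$ be a limit of points $\Psi(\HNU^{(K)}(f_m))$. For each standard $k$, the internal set
\[
\Bigl\{f\in\Omega_K\;\Big|\;\sum_{j\leq k}\bigl|\sqrt{c_j^{(K)}}\,\tilde{U}(e_j^{(K)})(f)-y_j\bigr|<1/k\Bigr\}
\]
is nonempty, because $f_m$ belongs to it for large $m$. These sets form a decreasing countable family, so countable saturation yields an $f$ in all of them. That $f$ satisfies $\Psi(\HNU^{(K)}(f))=y$.

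\textbf{Measurability of $\HNU^{(K)}$.} Since $\HOM^{(K)}$ is compact metric, hence separable, every open set is a countable union of open balls. It therefore suffices to treat a ball with centre $\HNU^{(K)}(f_0)$ and standard radius $r$. Its preimage is
\[
\bigcup_{k\in\mathbb{N}}\{f\;|\;\tilde{d}^{(K)}(f,f_0)<r-1/k\},
\]
a countable union of internal sets and hence Loeb measurable.
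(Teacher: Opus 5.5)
Your proof is correct, and it does not follow a route the paper takes, because the paper gives no proof of this proposition. It is quoted from \cite{nonez2024spectralequivalencesnonstandardsamplings}, where $\HOM^{(K)}$ is treated intrinsically as a nonstandard hull.

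In this paper the proposition serves as input to the next one. There, $\Psi^{(K)}(\HNU^{(K)}(f))=\St_Q(X_K(f))$ is shown to be well defined and injective using the characterization. It is then shown to be coordinatewise Lipschitz, and compactness of $\HOM^{(K)}$ upgrades it to a homeomorphism onto $\St_Q(X_K(\Omega_K))$.

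Your $\Psi$ is exactly this $\Psi^{(K)}$, since $\pi_j\circ X_K=\sqrt{c_j^{(K)}}\,\tilde{U}^{(K)}(e_j^{(K)})$. So you run the dependency in the opposite direction. The transferred uniform tail bound of Lemma \ref{lemma_uniform_series} reduces $\tilde{d}^{(K)}$, up to $2\epsilon$, to finitely many standard coordinates. That gives the characterization, and also the sharper fact that $\Psi^{(K)}$ is an isometry for the weighted metric $\rho$ on $Q$. Compactness then becomes closedness of $\St_Q(X_K(\Omega_K))$ in $Q$, which you obtain from countable saturation.

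What your route buys is a self-contained argument that proves Proposition \ref{proposition_nonstandard_hull_compact} and the subsequent homeomorphism statement together, in isometric rather than merely topological form. The cost is that it passes through $Q$ rather than staying with the intrinsic hull.

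Two small points are worth making explicit:
\begin{itemize}
\item Lemma \ref{lemma_uniform_series} sits in a section with a standing convergence assumption, and the appendix explicitly drops convergence. Its proof, however, uses only Properties (\ref{property_scale_proba}), (\ref{property_scale_infinity}), (\ref{property_scale_limits}) and (\ref{property_scale_bias}), so your use of it is legitimate.
\item In the isometry step you also need the standard tail bound $\sum_{j>J_\epsilon}c_j\|e_j\|^2\le\epsilon$. This follows by letting $n\to\infty$ in the lemma on finite partial sums, or from Property (\ref{property_scale_bias}) after enlarging $J_\epsilon$.
\end{itemize}
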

\begin{thm}\label{theorem_nonstandard_spectral_hull}
	Given the pushforward measure $\HMU^{(K)}=\HNU^{(K)}_{\#}(\mu_L^{(K)})$ on $\Borel(\HOM^{(K)}),$ the space $\hat{H}^{(K)}=L_2(\HOM^{(K)}, \HMU^{(K)})$ and the isometric map $\hat{\mathcal{I}}^{(K)}:\rightarrow \hat{H}^{(K)}$ given by $\hat{\mathcal{I}}^{(K)}(g)=g\circ\HNU^{(K)},$ we have that $U_L^{(K)}(H)\subset\hat{\mathcal{I}}^{(K)}(\hat{H}^{(K)})$, inducing the isometry $$\HU^{(K)}=(\hat{\mathcal{I}}^{(K)})^{-1}\circ U_L^{(K)} :H\rightarrow  \hat{H}^{(K)}.$$ 
	
	Furthermore, there exists a Borel measurable function $\Hm^{(K)}:\HOM^{(K)}\rightarrow \mathbb{R}$ such that $\Hm^{(K)}\circ\HNU^{(K)}\equiv m_L^{(K)}$ almost everywhere on $\Omega_K$ with respect to $\mu_L^{(K)},$ and so for any $x\in\Dom(A),$ we have that $\HU^{(K)}(Ax)=\Hm^{(K)}\cdot \HU^{(K)}(x).$
\end{thm}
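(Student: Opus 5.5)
The plan is to identify the image of $\hat{\mathcal{I}}^{(K)}$ with the $L_2$ classes of functions on $\Omega_K$ that are measurable with respect to the pulled-back $\sigma$-algebra $\HNU^{(K)\,-1}(\Borel(\HOM^{(K)}))$. Since $\hat{\mathcal{I}}^{(K)}$ is an isometry (by the definition of the pushforward $\HMU^{(K)}$), its image is a closed subspace of $L_2(\Omega_K,\mu_L^{(K)})$. It therefore suffices to show that $U_L^{(K)}(e_j)$ lies in this image for every standard $j$. Density of $\Span\{e_j\}$ (Property (\ref{property_nonstandard_scale_dense})), together with the fact that $U_L^{(K)}$ is an isometry (Theorem \ref{theorem_nonstandard_spectral_Loeb}), then gives $U_L^{(K)}(H)\subset\hat{\mathcal{I}}^{(K)}(\hat{H}^{(K)})$.

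For a fixed standard $j$, I define $W_j:\HOM^{(K)}\rightarrow\mathbb{K}$ by $W_j(\HNU^{(K)}(f))=\St\big((\tilde{U}^{(K)}(e_j^{(K)}))(f)\big)$.
\begin{itemize}
\item \emph{Finiteness.} The value is finite because of the uniform bound $|\tilde{U}^{(K)}(e_j^{(K)})|^2\leq 1/c_j^{(K)}$ from Proposition \ref{proposition_nonstandard_limited_isometry}, and $c_j^{(K)}\simeq c_j>0$.
\item \emph{Well-definedness.} This follows from the characterization in Proposition \ref{proposition_nonstandard_hull_compact}: $\HNU^{(K)}(f_1)=\HNU^{(K)}(f_2)$ forces the $j$-th coordinates to be infinitely close.
\item \emph{Continuity.} The $j$-th term of $\tilde{d}^{(K)}$ gives
\[
|(\tilde{U}^{(K)}(e_j^{(K)}))(f_1)-(\tilde{U}^{(K)}(e_j^{(K)}))(f_2)|\leq \frac{\tilde{d}^{(K)}(f_1,f_2)}{(c_j^{(K)})^{3/2}\|e_j^{(K)}\|^2}.
\]
The denominator has standard part $c_j^{3/2}\|e_j\|^2>0$. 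So $W_j$ is Lipschitz for $d^{(K)}$, hence Borel.
\end{itemize}
Since $W_j\circ\HNU^{(K)}=U_L^{(K)}(e_j)$, we get $U_L^{(K)}(e_j)=\hat{\mathcal{I}}^{(K)}(W_j)$, which proves the first claim. For $x\in\Dom(A)$, write $g_x=\HU^{(K)}(x)$.

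For $\Hm^{(K)}$, I fix a sequence $(x_k)$ in $\Dom(A)$ that is dense in $H$ and set $B_k=\{g_{x_k}\neq 0\}\subset\HOM^{(K)}$, a Borel set. I then define
\[
\Hm^{(K)}=\frac{g_{Ax_k}}{g_{x_k}}\ \text{ on } B_k\setminus\textstyle\bigcup_{i<k}B_i,
\]
taking Borel representatives, and $\Hm^{(K)}=0$ off $\bigcup_k B_k$. This is real-valued almost everywhere and Borel. By Theorem \ref{theorem_nonstandard_spectral_Loeb}, $m_L^{(K)}\cdot(g_{x_k}\circ\HNU^{(K)})=g_{Ax_k}\circ\HNU^{(K)}$ almost everywhere. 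Hence $\Hm^{(K)}\circ\HNU^{(K)}=m_L^{(K)}$ almost everywhere on $\HNU^{(K)\,-1}(\bigcup_k B_k)$. Once $\Hm^{(K)}\circ\HNU^{(K)}\equiv m_L^{(K)}$ almost everywhere, the identity $\HU^{(K)}(Ax)=\Hm^{(K)}\cdot\HU^{(K)}(x)$ follows by applying $(\hat{\mathcal{I}}^{(K)})^{-1}$ to Theorem \ref{theorem_nonstandard_spectral_Loeb}.

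The main obstacle is showing that $Z=\Omega_K\setminus\HNU^{(K)\,-1}(\bigcup_k B_k)$ is $\mu_L^{(K)}$-null. On $Z$ every $U_L^{(K)}(x_k)$ vanishes. By density of $(x_k)$ and the isometry property, $\mathbf{1}_Z U_L^{(K)}(y)=0$ for all $y\in H$, in particular for $y=e_j$ with $j$ standard. So it suffices to show that almost every $f$ has $\St(\tilde{U}^{(K)}(e_j^{(K)})(f))\neq0$ for some standard $j$. I would argue as follows.
\begin{itemize}
\item By the definition of $\tilde{\mu}^{(K)}(f)$, the internal identity $\sum_{j\leq N_K}c_j^{(K)}|\tilde{U}^{(K)}(e_j^{(K)})(f)|^2=1$ holds for every $f\in\Omega_K$.
\item Integrating the tail $j>J$ against $\tilde{\mu}^{(K)}$ gives $\sum_{J<j\leq N_K}c_j^{(K)}\|e_j^{(K)}\|^2$. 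This is approximately $1-\sum_{j\leq J}c_j\|e_j\|^2$, which is small for large standard $J$ by Property (\ref{property_nonstandard_scale_bias}).
\item By Markov's inequality in the Loeb space, the set of $f$ whose standard head $\sum_{j\leq J}c_j|\St\,\tilde{U}^{(K)}(e_j^{(K)})(f)|^2$ is at most $1/2$ has measure at most $2\epsilon_J$, where $\epsilon_J\rightarrow0$.
\item Letting $J\rightarrow\infty$ shows that $Z$ is null.
\end{itemize}
One must take care that Loeb-null sets and $\HNU^{(K)}$-preimages interact correctly; this is handled by always passing through the isometric identification via $\hat{\mathcal{I}}^{(K)}$.
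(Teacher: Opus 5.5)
The paper gives no proof of this statement. It is imported from \cite{nonez2024spectralequivalencesnonstandardsamplings}, together with Proposition \ref{proposition_nonstandard_limited_isometry}, Theorem \ref{theorem_nonstandard_spectral_Loeb} and Proposition \ref{proposition_nonstandard_hull_compact}. Your proposal derives it from those three companion results instead, and the argument is correct modulo two small repairs.

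\begin{itemize}
\item \emph{Factorization.} The closed image of $\hat{\mathcal{I}}^{(K)}$, together with the Lipschitz bound from the $j$-th term of $\tilde{d}^{(K)}$, handles this cleanly. Your constant $(c_j^{(K)})^{3/2}\|e_j^{(K)}\|^2$ is the correct one.
\item \emph{The function $\Hm^{(K)}$.} Here $m_L^{(K)}$ is already given, so the only issue is its measurability, modulo null sets, for $(\HNU^{(K)})^{-1}(\Borel(\HOM^{(K)}))$. Your ratios $\HU^{(K)}(Ax_k)/\HU^{(K)}(x_k)$ reduce this to showing that the coordinates $\St\circ\tilde{U}^{(K)}(e_j^{(K)})$ have no common zero on a non-null set.
\item \emph{Comparison with the paper.} Your Markov estimate on the tail of $\sum_{j\leq N_K}c_j^{(K)}|(\tilde{U}^{(K)}(e_j^{(K)}))(f)|^2=1$ is the Loeb-space analogue of Lemma \ref{lemma_uniform_series} and Proposition \ref{proposition_null_origin}. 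The paper uses those for the same purpose in the uniqueness part of Theorem \ref{theorem_multiplication_operator}. In that classical setting, however, existence comes from a weak* limit of the $\lambda_n$ rather than from ratios.
\end{itemize}

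First repair: you write $W_j\circ\HNU^{(K)}=U_L^{(K)}(e_j)$, but $U_L^{(K)}(e_j)$ is $\St\circ\tilde{U}^{(K)}({}^*e_j)$, not $\St\circ\tilde{U}^{(K)}(e_j^{(K)})$. So the equality holds only $\mu_L^{(K)}$-almost everywhere, and that needs an argument. Put $x={}^*e_j-e_j^{(K)}\simeq 0$. Then $\int_{\Omega_K}|\tilde{U}^{(K)}(x)|^2d\tilde{\mu}^{(K)}=\|\Proj_{H_K}x\|^2\simeq 0$, so by Chebyshev $\tilde{U}^{(K)}(x)$ is Loeb-almost everywhere infinitesimal. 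The paper makes the same observation in the proof of the convergence characterization in Appendix \ref{appendix_nonstandad}. You use this identification a second time when passing from $\mathbf{1}_Z U_L^{(K)}(e_j)=0$ to the vanishing of $\St\circ\tilde{U}^{(K)}(e_j^{(K)})$ on $Z$.

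Second repair: replace your $\Hm^{(K)}$ by its real part, so that it maps into $\mathbb{R}$ everywhere and not just almost everywhere.

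Also state explicitly the fact your final step relies on. For Borel $F,G$ on $\HOM^{(K)}$, $F\circ\HNU^{(K)}=G\circ\HNU^{(K)}$ holding $\mu_L^{(K)}$-a.e. implies $F=G$ holding $\HMU^{(K)}$-a.e., because $\HMU^{(K)}(\{F\neq G\})=\mu_L^{(K)}((\HNU^{(K)})^{-1}(\{F\neq G\}))$. This is what lets you pull $U_L^{(K)}(Ax)=m_L^{(K)}\cdot U_L^{(K)}(x)$ back to $\HU^{(K)}(Ax)=\Hm^{(K)}\cdot\HU^{(K)}(x)$.
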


We now want to properly characterize the convergence of $(S_n)_{n\in\mathbb{N}}$, as well as the resulting objects $\HMU$, $\HU$ and $\Hm$, using respectively $\HMU^{(K)}$, $\HU^{(K)}$ and $\Hm^{(K)}.$ First, from the Transfer Principle applied to both $\mu_n$ and $U_n$, and comparison of definitions, we can deduce the following.

\begin{prop}
	We have that for any infinite $K\in{}^*\mathbb{N}$, ${}^*\mu_K=\tilde{\mu}^{(K)}$ and ${}^*U_K=\tilde{U}^{(K)}.$
\end{prop}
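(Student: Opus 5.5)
The statement is essentially a transfer argument. I will treat $\mu_n$ and $U_n$ as values of standard sequences and apply the Transfer Principle to their defining formulas.

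Consider the standard sequences $n\mapsto\mu_n$ and $n\mapsto U_n$. Each is a function on $\mathbb{N}$ whose value at $n$ is determined by a first-order formula in the standard sequences $(\Omega_n)$, $(N_n)$, $(e_j^{(n)})$, $(c_j^{(n)})$. Concretely, for every $n\in\mathbb{N}$ the following holds:
\[
\forall V\subset\Omega_n\colon\ \mu_n(V)=\sum_{j=1}^{N_n}c_j^{(n)}\|\Proj_{\Span(V)}e_j^{(n)}\|^2 ,
\]
together with
\[
\forall x\in H_n,\ \forall f\in\Omega_n\colon\ (U_n(x))(f)=\frac{\LL x,f\RR}{\sqrt{\mu_n(\{f\})}} .
\]
These are bounded statements in the superstructure, quantified over $n\in\mathbb{N}$. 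By the Transfer Principle they hold for every $n\in{}^*\mathbb{N}$, and in particular for the infinite $K$. There the quantifier over subsets becomes a quantifier over internal subsets, that is over ${}^*\mathcal{P}(\Omega_K)$. The finite sums become hyperfinite sums up to $N_K$, and $\Proj$, $\Span$ and the inner product become their ${}^*$-extensions.

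Comparing the transferred formula for ${}^*\mu_K$ with Definition \ref{definition_nonstandard_internal_measure}, the two maps have the same domain ${}^*\mathcal{P}(\Omega_K)$ and agree at every argument, so ${}^*\mu_K=\tilde{\mu}^{(K)}$. Substituting this into the transferred formula for $U_K$ gives $({}^*U_K(x))(f)=\LL x,f\RR/\sqrt{\tilde{\mu}^{(K)}(f)}$, which is exactly the formula defining $\tilde{U}^{(K)}$.

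The only delicate point is a domain mismatch. By transfer, ${}^*U_K$ is defined on $H_K$ and takes values in ${}^*L_2(\Omega_K,\mu_K)={}^*\mathbb{K}^{\Omega_K}$, whereas $\tilde{U}^{(K)}$ is written with domain ${}^*H$. The defining formula $f\mapsto\LL x,f\RR/\sqrt{\tilde{\mu}^{(K)}(f)}$ makes sense for all $x\in{}^*H$, so I will read the claimed equality as equality of the two maps on $H_K$, which is where ${}^*U_K$ lives. Equivalently, ${}^*U_K$ is $\tilde{U}^{(K)}$ restricted to $H_K$; on $H_K$ this restriction is exactly what the cited results use. The codomains coincide once ${}^*\mu_K=\tilde{\mu}^{(K)}$ is known, since ${}^*L_2(\Omega_K,{}^*\mu_K)$ is then literally ${}^*L_2(\Omega_K,\tilde{\mu}^{(K)})$.

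Finally, I would check that nothing depends on $K$ beyond its being a hypernatural. The positivity $\tilde{\mu}^{(K)}(f)>0$ that the definition requires is itself the transfer of the earlier proposition that $\mu_n(f)>0$ for all $n$ and $f\in\Omega_n$, so the two definitions are well posed under the same hypotheses.
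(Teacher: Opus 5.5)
Your proposal is correct and follows the paper's own justification: apply the Transfer Principle to the defining formulas of $\mu_n$ and $U_n$, then compare the result with Definition~\ref{definition_nonstandard_internal_measure}. Your remark that ${}^*U_K=\tilde{U}^{(K)}$ should be read as an equality on $H_K$, where ${}^*U_K$ lives and where the paper actually uses it (on the $e_j^{(K)}$), is a fair clarification that the paper leaves implicit.
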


We remind the function $X_n:\Omega_n\rightarrow Q$ for any $n\in\mathbb{N}$ as per Definition \ref{definition_finite_embedding_cube}. Furthermore, we note that since $Q$ is compact and Hausdorff, the standard part function $\St_Q:{}^*Q\rightarrow Q$ is well defined on ${}^*Q$.

\begin{definition}
	Given an infinite $K\in{}^*\mathbb{N},$ let $\Psi^{(K)}:\HOM^{(K)}\rightarrow Q$ given by $$\Psi^{(K)}(\HNU^{(K)}(f))=\St_Q((X_K(f))$$ for $f\in\Omega_K.$
\end{definition}
\begin{prop}
	The map $\Psi^{(K)}$ is a well-defined function. Furthermore, $\Psi^{(K)}$ is an homeomorphism between $\HOM^{(K)}$ and $\St_Q\left(X_K(\Omega_K)\right).$
\end{prop}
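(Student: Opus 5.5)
The plan is to reduce everything to one coordinatewise fact and then use the topology of $Q$ and compactness of $\HOM^{(K)}$. By transfer, for $f\in\Omega_K$ and standard $j\leq N_K$ (every standard $j$ qualifies, since $N_K$ is infinite) we have
\[
\pi_j(X_K(f))=\sqrt{c_j^{(K)}}\,(\tilde{U}^{(K)}(e_j^{(K)}))(f),
\]
using ${}^*U_K=\tilde{U}^{(K)}$. Proposition \ref{proposition_nonstandard_limited_isometry} gives $|(\tilde{U}^{(K)}(e_j^{(K)}))(f)|^2\leq 1/c_j^{(K)}$, so each coordinate lies in the closed unit disk. Since $Q$ carries the product topology, $\St_Q(X_K(f))$ is the point whose $j$-th coordinate is $\St(\pi_j(X_K(f)))$, and this equals $\sqrt{c_j}\,\St\big((\tilde{U}^{(K)}(e_j^{(K)}))(f)\big)$ because $c_j^{(K)}\simeq c_j>0$ by the earlier proposition on strong sequences.

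Well-definedness and injectivity both follow from the characterization in Proposition \ref{proposition_nonstandard_hull_compact}: $\HNU^{(K)}(f_1)=\HNU^{(K)}(f_2)$ holds iff $(\tilde{U}^{(K)}(e_j^{(K)}))(f_1)\simeq(\tilde{U}^{(K)}(e_j^{(K)}))(f_2)$ for every standard $j$. Since $\sqrt{c_j^{(K)}}$ is finite and not infinitesimal, this is equivalent to $\pi_j(X_K(f_1))\simeq\pi_j(X_K(f_2))$ for every standard $j$. By the coordinatewise description of $\St_Q$, that in turn is equivalent to $\St_Q(X_K(f_1))=\St_Q(X_K(f_2))$. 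Hence $\Psi^{(K)}$ is well defined and injective, and by construction its image is exactly $\St_Q(X_K(\Omega_K))$.

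Continuity is the main step. Since $\HOM^{(K)}$ is compact by Proposition \ref{proposition_nonstandard_hull_compact} and $Q$ is Hausdorff, a continuous injection is a homeomorphism onto its image, and that image is compact. So it suffices to show each $\pi_j\circ\Psi^{(K)}$ is continuous, and I would show it is Lipschitz in $d^{(K)}$. Every term of the hyperfinite sum defining $\tilde{d}^{(K)}$ is nonnegative, so for standard $j$,
\[
\tilde{d}^{(K)}(f_1,f_2)\geq (c_j^{(K)})^{3/2}\|e_j^{(K)}\|^2\,\big|(\tilde{U}^{(K)}(e_j^{(K)}))(f_1)-(\tilde{U}^{(K)}(e_j^{(K)}))(f_2)\big|.
\]
Taking standard parts, and using that $\St((c_j^{(K)})^{3/2}\|e_j^{(K)}\|^2)=c_j^{3/2}\|e_j\|^2>0$, we get
\[
|\pi_j\Psi^{(K)}(p_1)-\pi_j\Psi^{(K)}(p_2)|\leq \frac{d^{(K)}(p_1,p_2)}{c_j\|e_j\|^2}.
\]

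The only delicate points are that the standard part commutes with products of finite quantities and that the relevant coefficients are non-infinitesimal; both are guaranteed by the scale properties $c_j>0$ and $e_j\neq0$.
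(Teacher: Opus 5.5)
Your proof is correct and follows essentially the same route as the paper: the coordinate identity $\pi_j\circ\St_Q\circ X_K=\sqrt{c_j}\,\St\circ\tilde{U}^{(K)}(e_j^{(K)})$ combined with Proposition \ref{proposition_nonstandard_hull_compact} gives well-definedness and injectivity, and the coordinatewise Lipschitz bound $|\pi_j\Psi^{(K)}(p_1)-\pi_j\Psi^{(K)}(p_2)|\leq d^{(K)}(p_1,p_2)/(c_j\|e_j\|^2)$ together with compactness of $\HOM^{(K)}$ gives the homeomorphism. Your explicit use of the bound $|(\tilde{U}^{(K)}(e_j^{(K)}))(f)|^2\leq 1/c_j^{(K)}$ to ensure the coordinates are finite, so that the standard parts may be taken, is a detail the paper leaves implicit.
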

\begin{proof}
	We first prove that for any $f_1,f_2\in\Omega_K$, $\HNU(f_1)=\HNU(f_2)$ if and only if $\St_Q(X_K(f_1))=\St_Q(X_K(f_2)),$ which shows that $\Psi^{(K)}$ is well-defined and injective. 
	
	Let $j\in\mathbb{N}$. We know that by definition of $Q$, $\pi_j$ is continuous on $Q$. Thus, $\pi_j\circ \St_Q=\St\circ{}^*\pi_j.$ Furthermore, by definition of $X_K,$ and since $N_K$ is infinite and so $j\leq N_K$, we have ${}^*\pi_j\circ X_K=\sqrt{c_j^{(K)}}{}^*U_K(e_j^{(K)})=\sqrt{c_j^{(K)}}\tilde{U}^{(K)}(e_j^{(K)}).$ Therefore, we have that $\pi_j\circ \St_Q\circ X_K=\sqrt{c_j}\cdot\St\circ \tilde{U}^{(K)}(e_j^{(K)}).$ 
	
	Since each $c_j>0$, and since $j\in\mathbb{N}$ is arbitrary, we have that for any $f_1 f_2\in\Omega_K,$ the equivalence chain
	\begin{align*}
		&\phantom{{}\iff{}} \HNU^{(K)}(f_1)=\HNU^{(K)}(f_2)\\
		 &\iff \St((\tilde{U}^{(K)}(e_j^{(K)}))(f_1))=\St((\tilde{U}^{(K)}(e_j^{(K)}))(f_2)) \text{ for all $j\in\mathbb{N}$ }\\
		 &\iff \pi_j(\St_Q(X_K(f_1)))=\pi_j(\St_Q(X_K(f_2)))\text{ for all $j\in\mathbb{N}$ }\\
		 &\iff \St_Q(X_K(f_1))=\St_Q(X_K(f_2))
	\end{align*}
	holds, with the first equivalence being due to Proposition  \ref{proposition_nonstandard_hull_compact}. Thus, $\Psi^{(K)}$ is well-defined and injective. Furthermore, it is clear from the definition that $\Psi^{(K)}(\HOM^{(K)})=\St_Q(X_K(\Omega_K)).$ Therefore, since $\HOM^{(K)}$ is compact, proving that $\Psi^{(K)}$ is continuous is sufficient to establish that it is an homeomorphism between the two sets.
	
	From the fact that ${}^*\pi_j\circ X_K=\sqrt{c_j^{(K)}}\tilde{U}^{(K)}(e_j^{(K)})$ for any $j\in\mathbb{N}$ and from the definition of $\tilde{d}^{(K)}$ we have that for any $f_1,f_2\in\Omega_K,$ $$\left|({}^*\pi_j\circ X_K)(f_1)-({}^*\pi_j\circ X_K)(f_2) \right|\leq\frac{\tilde{d}^{(K)}(f_1,f_2)}{c_j^{(K)}\|e_j^{(K)}\|^2}.$$ Therefore, since $\St(c_j^{(K)}\|e_j^{(K)}\|^2)=c_j\|e_j\|^2>0,$ we have $$ \St\left(\frac{\tilde{d}^{(K)}(f_1,f_2)}{c_j^{(K)}\|e_j^{(K)}\|^2}\right)=\frac{d^{(K)}(\HNU^{(K)}(f_1),\HNU^{(K)}(f_2))}{c_j\|e_j\|^2},$$ from which we conclude, using $\pi_j\circ\Psi^{(K)}\circ\HNU^{(K)}=\St\circ{}^*\pi_j\circ X_K$, that $$\left|\pi_j(\Psi^{(K)}(\HNU^{(K)}(f_1)))-\pi_j(\Psi^{(K)}(\HNU^{(K)}(f_2)))\right|\leq\frac{d^{(K)}(\HNU^{(K)}(f_1),\HNU^{(K)}(f_2))}{c_j\|e_j\|^2}.$$ 
	
	This implies that $\pi_j\circ \Psi^{(K)}$ is Lipschitz continuous for each $j\in\mathbb{N}$. By nature of the product topology, that is sufficient to establish that $\Psi^{(K)}$ is continuous, ending the proof.
\end{proof}

\begin{definition}
	For an infinite $K\in{}^*\mathbb{N},$ let $\mu_Q^{(K)}$ be the pushforward probability measure $\mu_Q^{(K)}=\Psi^{(K)}_{\#}(\HMU^{(K)})$ on $\Borel(Q).$ Noting that $\mu_Q^{(K)}(Q\setminus\Psi^{(K)}(\HOM^{(K)}))=0,$ let $U_Q^{(K)}:H\rightarrow L_2(Q,\mu_Q^{(K)})$ with $(U_Q^{(K)}(x))|_{\Psi^{(K)}(\HOM^{(K)})}=(\HU^{(K)}(x))\circ (\Psi^{(K)})^{-1}.$ Finally, let $m_Q^{(K)}=Q\rightarrow\mathbb{R}$ measurable with $m_Q^{(K)}|_{\Psi^{(K)}(\HOM^{(K)})}=\Hm^{(K)}\circ (\Psi^{(K)})^{-1}$.
\end{definition}

The next theorem, which is the main result of this section, states a powerful characterization of the convergence of $A$-sampling-scale sequences, as well as a description of the resulting measure when applicable.

\begin{thm}
 The $A$-sampling-scale sequence $(S_n)_{n\in\mathbb{N}}$ is convergent if and only if $\mu_Q^{(K)}$ is invariant on infinite $K\in{}^*\mathbb{N},$ in which case $\HMU=\mu_Q^{(K)}$, $\HU=U_Q^{(K)}$ and $\Hm=m_Q^{(K)}$ $\HMU$-almost everywhere.
\end{thm}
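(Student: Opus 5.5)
The plan is to reduce everything to one identity linking $\mu_Q^{(K)}$ to the internal integrals $\int_{\Omega_K}(h\circ X_K)\,d\mu_K$, and then use the nonstandard characterization of limits. Concretely, I would first show that for every $h\in C(Q)$ and every infinite $K\in{}^*\mathbb{N}$,
$$\int_Q h\,d\mu_Q^{(K)}=\St\left(\int_{\Omega_K}({}^*h\circ X_K)\,d\tilde{\mu}^{(K)}\right).$$
The argument unwinds the two pushforwards:
$$\int_Q h\,d\mu_Q^{(K)}=\int_{\HOM^{(K)}}h\circ\Psi^{(K)}\,d\HMU^{(K)}=\int_{\Omega_K}h\circ\St_Q\circ X_K\,d\mu_L^{(K)}.$$
Since $Q$ is compact and $h$ is continuous, ${}^*h(y)\simeq h(\St_Q(y))$ for every $y\in{}^*Q$. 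Hence the integrand equals $\St\circ({}^*h\circ X_K)$ everywhere. This function is bounded by $\|h\|_0$, so it is $S$-integrable, and the standard Loeb integration theorem turns the last integral into the standard part of the internal sum. By the previous proposition, ${}^*\mu_K=\tilde{\mu}^{(K)}$.

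With this identity, the equivalence follows quickly. Put $a_n(h)=\int_{\Omega_n}(h\circ X_n)\,d\mu_n$. This is a standard sequence bounded by $\|h\|_0$, and by transfer its $K$-th term is exactly the internal integral above. A bounded standard sequence converges if and only if $\St(a_K(h))$ is the same for all infinite $K$; this is the nonstandard characterization of limits, with compactness of bounded sets giving the standard parts. So $S$ is convergent iff for each $h$ the value $\int_Q h\,d\mu_Q^{(K)}$ does not depend on $K$. By the uniqueness part of Riesz--Markov, this holds iff $\mu_Q^{(K)}$ itself does not depend on $K$. In that case, Proposition \ref{proposition_riesz_markov} together with the identity gives $\HMU=\mu_Q^{(K)}$.

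Next I would identify $\HU$ with $U_Q^{(K)}$. Since both maps are linear isometries, Proposition \ref{proposition_unique_isometry} and density mean it suffices to check $U_Q^{(K)}(e_j)=V_j$ $\HMU$-a.e. for every $j$. On one side, by the computation in the preceding proof,
$$V_j(\St_Q(X_K(f)))=\frac{1}{\sqrt{c_j}}\St\left(\sqrt{c_j^{(K)}}\,\tilde{U}^{(K)}(e_j^{(K)})(f)\right)=\St\left(\tilde{U}^{(K)}(e_j^{(K)})(f)\right).$$
On the other side, $U_Q^{(K)}(e_j)\circ\Psi^{(K)}\circ\HNU^{(K)}=U_L^{(K)}(e_j)=\St\circ\tilde{U}^{(K)}(e_j)$ holds $\mu_L^{(K)}$-a.e. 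Because $\tilde{U}^{(K)}$ is internally isometric and $e_j^{(K)}\simeq e_j$, the internal $L_2$ distance between $\tilde{U}^{(K)}(e_j)$ and $\tilde{U}^{(K)}(e_j^{(K)})$ is infinitesimal. Hence their standard parts agree $\mu_L^{(K)}$-a.e., using Proposition \ref{proposition_nonstandard_limited_isometry} for a.e. finiteness together with the $S$-integrability coming from the uniform bound $1/c_j^{(K)}$. Pushing forward through $\Psi^{(K)}\circ\HNU^{(K)}$ then gives the required a.e. equality in $L_2(Q,\HMU)$.

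Finally, for $\Hm$ I would invoke the uniqueness part of Theorem \ref{theorem_multiplication_operator}. Its proof only uses that the function is real-valued and measurable and satisfies $\HU(Ax)=\Hm\cdot\HU(x)$, together with Proposition \ref{proposition_null_origin}. By Theorem \ref{theorem_nonstandard_spectral_hull}, transported through $\Psi^{(K)}$ and using $\HU=U_Q^{(K)}$, the function $m_Q^{(K)}$ satisfies $U_Q^{(K)}(Ax)=m_Q^{(K)}\cdot U_Q^{(K)}(x)$. If square-integrability is needed, it follows from Fatou for Loeb measures: $\int|m_L^{(K)}|^2d\mu_L^{(K)}\leq\St\int|\lambda_K|^2d\tilde{\mu}^{(K)}\leq C$ by Property (\ref{property_compat_A}). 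Hence $\Hm=m_Q^{(K)}$ $\HMU$-a.e.

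I expect the main obstacle to be the careful justification of the Loeb-integral steps, namely $S$-integrability and the a.e. identification of $\St\circ\tilde{U}^{(K)}(e_j^{(K)})$ with $\St\circ\tilde{U}^{(K)}(e_j)$. The rest is transfer and uniqueness arguments already available in the paper.
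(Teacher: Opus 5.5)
Your proposal is correct and follows essentially the same route as the paper: the Loeb-integration identity $\int_Q h\,d\mu_Q^{(K)}=\St\bigl(\int_{\Omega_K}({}^*h\circ X_K)\,d\tilde{\mu}^{(K)}\bigr)$, the nonstandard characterization of limits with Riesz--Markov uniqueness, the a.e.\ check $U_Q^{(K)}(e_j)=V_j$ pulled back through $\Psi^{(K)}\circ\HNU^{(K)}$, and the uniqueness clause of Theorem~\ref{theorem_multiplication_operator} for $\Hm$. Your departures are minor. You obtain $\St\circ\tilde{U}^{(K)}(e_j)=\St\circ\tilde{U}^{(K)}(e_j^{(K)})$ a.e.\ from an infinitesimal internal $L_2$ distance plus Chebyshev rather than the paper's rescaling argument; note that $\tilde{U}^{(K)}$ is only a contraction on ${}^*H$, since ${}^*e_j$ need not lie in $H_K$, but a contraction suffices. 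You also add a Loeb--Fatou check that $m_Q^{(K)}$ is square-integrable, which the paper omits.
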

\begin{proof}
	First, we note that for any $h\in C(Q)$ and  infinite $K\in{}^*\mathbb{N},$ we have 
	\begin{align*}
		\int_Q hd\mu_Q^{(K)}&=\int_{\HOM^{(K)}}(h\circ \Psi^{(K)})d\HMU^{(K)}=\int_{\Omega_K}(h\circ \Psi^{(K)}\circ \HNU^{(K)})d\mu_L^{(K)}\\
		&=\int_{\Omega_K}(h\circ \St_Q\circ X_K)d\mu_L^{(K)}=\int_{\Omega_K}(\St\circ {}^*h\circ X_K)d\mu_L^{(K)}.
	\end{align*}
	
	Since $Q$ is compact and $h$ is continuous, $|{}^*h(X_K(f))|\leq \max_Q(|h|)\in\mathbb{R}$ whenever $f\in\Omega_K.$ Thus, ${}^*h\circ X_K$ is an internal function on $\Omega_K$ with a finite bound. Theorem 6.1 of \cite{Ross1997} for Loeb measures applies, with $$\int_{\Omega_K}(\St\circ {}^*h\circ X_K)d\mu_L^{(K)}=\St\left(\int_{\Omega_K}({}^*h\circ X_K)d\tilde{\mu}^{(K)}\right).$$ And so we get $$\int_Q hd\mu_Q^{(K)}=\St\left(\int_{\Omega_K}({}^*h\circ X_K)d\tilde{\mu}^{(K)}\right)=\St\left(\int_{\Omega_K}({}^*h\circ X_K)d({}^*\mu_K)\right) $$ for any $h\in C(Q)$ and infinite $K\in{}^*\mathbb{N}.$ Therefore, by elementary nonstandard analysis, we have the characterization of the set of accumulation points $$\operatorname{Acc}\left(\left(\int_{\Omega_n}(h\circ X_n)d\mu_n\right)_{n\in\mathbb{N}}\right)=\left\{\int hd\mu_Q^{(K)}\;|\; K\in{}^*\mathbb{N}\setminus\mathbb{N} \right\}$$ given any $h\in C(Q).$ 
	
	On one hand, if $\mu_Q^{(K)}$ is invariant on $K$, then given any $h\in C(Q),$ the right-hand side is a singleton. Therefore, the left-hand side is a singleton as well, and so the sequence $\left(\int_{\Omega_n}(h\circ X_n)d\mu_n\right)_{n\in\mathbb{N}}$ converges. Thus, $(S_n)_{n\in\mathbb{N}}$ is convergent.
	
	On the other hand, if $(S_n)_{n\in\mathbb{N}}$ is convergent, then the left-hand side is a singleton given any such $h$. In that case, the right-hand side is a singleton as well, and for any infinite $K_1, K_2\in{}^*\mathbb{N},$ $\int_Qhd\mu_Q^{(K_1)}=\int_Qhd\mu_Q^{(K_2)}$. Therefore, reversing our quantifiers, we have that for any infinite $K_1, K_2\in{}^*\mathbb{N},$ $\int_Qhd\mu_Q^{(K_1)}=\int_Qhd\mu_Q^{(K_2)}$ whenever $h\in C(Q).$ By the uniqueness of Riesz-Markov representation, we conclude that $\mu_Q^{(K_1)}=\mu_Q^{(K_2)},$ and so $\mu_Q^{(K)}$ is invariant on $K$. For the same reason, we have that the resulting measure $\HMU$ is given by $\HMU=\mu_Q^{(K)}$ in this case. 
	
	Now, we want to show that in that case $U_Q^{(K)}=\HU$ and $m_Q^{(K)}=\Hm.$ First, we note that if $x\in{}^*H$ is infinitesimal, then $\tilde{U}^{(K)}(x)$ is $\mu_L^{(K)}$-almost everywhere infinitesimal, as (for $x\neq 0$) $\tilde{U}^{(K)}(\frac{x}{\|x\|})$ is almost everywhere finite by Proposition \ref{proposition_nonstandard_limited_isometry}. Thus, for $j\in\mathbb{N}$, we  have that $U_L^{(K)}(e_j)=\St\circ\tilde{U}^{(K)}(e_j^{(K)})$ $\mu_L$-almost everywhere on $\Omega_K$. 
	
	We then evaluate, for any $j\in\mathbb{N}$, and with any $f\in\Omega_K$ such that $$(\HU^{(K)}(e_j))(\HNU^{(K)}(f))=(U_L^{(K)}(e_j))(f)=\St((\tilde{U}^{(K)}(e_j^{(K)}))(f)),$$ which holds $\mu_L$-almost everywhere on $\Omega_K$:
	
	\begin{align*}
		(U_Q^{(K)}(e_j))(\Psi^{(K)}(\HNU^{(K)}(f)))&=(\HU^{(K)}(e_j))(\HNU^{(K)}(f))\\
		&=(U_L^{(K)}(e_j))(f)\\
		&=\St((\tilde{U}^{(K)}(e_j^{(K)}))(f))\\
		&=\St(({}^*U_K(e_j^{(K)}))(f))\\
		&=\frac{1}{\sqrt{c_j}}\St({}^*\pi_j(X_K(f)))\\
		&=\frac{1}{\sqrt{c_j}}\pi_j\left(\St_Q(X_K(f))\right)\\
		&=V_j(\Psi^{(K)}(\HNU^{(K)}(f)))\\
		&=(\HU(e_j))(\Psi^{(K)}(\HNU^{(K)}(f))).
	\end{align*}
	Thus, since $\HMU(\Psi^{(K)}(\HOM^{(K)}))=\HMU^{(K)}(\HOM^{(K)})=1,$ we have $U_Q^{(K)}(e_j)=\HU(e_j).$ Since $\{e_j\}_{j\in\mathbb{N}}$ span a dense subset of $H$, we conclude $U_Q^{(K)}=\HU.$
	
	Finally, we calculate, for any $x\in\Dom(A),$ that  $$\HU(Ax)=\HU^{(K)}(Ax)\circ (\Psi^{(K)})^{-1}= (\Hm^{(K)}\cdot \HU^{(K)}(x))\circ (\Psi^{(K)})^{-1}=m_Q^{(K)}\cdot \HU(x). $$ Since this property characterizes $\Hm$ by Theorem \ref{theorem_multiplication_operator}, we conclude $m_Q^{(K)}=\Hm$.
\end{proof}

One reason this characterization seems powerful is given by the following.
\begin{cor}\label{corollary_nonstandard_invariance}
	Suppose that $(\Omega, \mathcal{A}, \mu)$ is some probability space. Furthermore, suppose that for any infinite $K\in{}^*\mathbb{N}$, we have a measurable map $\hat{Y}^{(K)}:\Omega\rightarrow\HOM^{(K)}$ for which $\hat{Y}^{(K)}_{\#}(\mu)=\HMU^{(K)}.$ Finally, suppose that $\Psi^{(K)}\circ \hat{Y}^{(K)}=X:\Omega\rightarrow Q$ is invariant on $K$. 
	
	Then, $(S_n)_{n\in\mathbb{N}}$ is convergent, with $\HMU=X_{\#}(\mu)$. Furthermore, we have that for any $x\in\Dom(A),$  $$U(Ax)=m\cdot U(x),$$ where $m=\Hm\circ X=\Hm^{(K)}\circ \hat{Y}^{(K)}: \Omega\rightarrow\mathbb{R}$, and $U: H\rightarrow L_2(\Omega,\mu)$ is the isometry given by $U(z)=\HU(z)\circ X=\HU^{(K)}(z)\circ \hat{Y}^{(K)}$ for $z\in H$. 
\end{cor}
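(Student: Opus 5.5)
The plan is to reduce everything to the characterization theorem just proved: $(S_n)_{n\in\mathbb{N}}$ is convergent if and only if $\mu_Q^{(K)}$ does not depend on the infinite $K$, and in that case $\HMU=\mu_Q^{(K)}$, $\HU=U_Q^{(K)}$ and $\Hm=m_Q^{(K)}$ $\HMU$-almost everywhere.

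\textbf{Step 1: identify $\mu_Q^{(K)}$.} By definition $\mu_Q^{(K)}=\Psi^{(K)}_{\#}(\HMU^{(K)})$. By hypothesis $\HMU^{(K)}=\hat{Y}^{(K)}_{\#}(\mu)$. Functoriality of push-forwards then gives
$$\mu_Q^{(K)}=\Psi^{(K)}_{\#}\hat{Y}^{(K)}_{\#}(\mu)=(\Psi^{(K)}\circ\hat{Y}^{(K)})_{\#}(\mu)=X_{\#}(\mu).$$
For this I need $\Psi^{(K)}$ to be Borel measurable; it is a homeomorphism onto its image, hence continuous. The right-hand side does not depend on $K$. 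The characterization theorem therefore gives that $(S_n)$ is convergent and $\HMU=X_{\#}(\mu)$.

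\textbf{Step 2: the isometry.} Let $z\in H$. The theorem gives $\HU(z)=U_Q^{(K)}(z)$ $\HMU$-almost everywhere. On $\Psi^{(K)}(\HOM^{(K)})$ we have $U_Q^{(K)}(z)\circ\Psi^{(K)}=\HU^{(K)}(z)$, so
$$\HU(z)\circ X=U_Q^{(K)}(z)\circ\Psi^{(K)}\circ\hat{Y}^{(K)}=\HU^{(K)}(z)\circ\hat{Y}^{(K)}$$
$\mu$-almost everywhere. This uses that $X$ takes values in $\Psi^{(K)}(\HOM^{(K)})$, and that a $\HMU$-null set pulls back under $X$ to a $\mu$-null set because $\HMU=X_{\#}(\mu)$. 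Composition with $X$ is an isometry $L_2(Q,\HMU)\to L_2(\Omega,\mu)$ by the change-of-variables formula. Hence $U$ is a well-defined isometry, as the composition of two isometries.

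\textbf{Step 3: the multiplication function.} Theorem \ref{theorem_multiplication_operator} gives $\HU(Ax)=\Hm\cdot\HU(x)$ in $L_2(Q,\HMU)$. Composing with $X$ yields $U(Ax)=(\Hm\circ X)\cdot U(x)$ $\mu$-almost everywhere. Since $\Hm=m_Q^{(K)}$ $\HMU$-almost everywhere and $m_Q^{(K)}\circ\Psi^{(K)}=\Hm^{(K)}$ on $\HOM^{(K)}$, the same argument as in Step 2 gives $\Hm\circ X=\Hm^{(K)}\circ\hat{Y}^{(K)}$ $\mu$-almost everywhere.

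\textbf{Main obstacle.} The only delicate point is the almost-everywhere bookkeeping. The identifications $\HU=U_Q^{(K)}$ and $\Hm=m_Q^{(K)}$ hold only $\HMU$-almost everywhere, and $U_Q^{(K)}$ and $m_Q^{(K)}$ are only specified on $\Psi^{(K)}(\HOM^{(K)})$. I will handle this by noting that $X^{-1}$ of any $\HMU$-null set is $\mu$-null, which is exactly what $X_{\#}(\mu)=\HMU$ says. I will also check that $\Psi^{(K)}(\HOM^{(K)})$ is Borel, which holds because it is compact. With these two facts, all the equalities above transfer to $\mu$-almost-everywhere equalities on $\Omega$.
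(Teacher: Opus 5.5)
Your proposal is correct and follows the paper's proof essentially step for step. You compose push-forwards to get $\mu_Q^{(K)}=X_{\#}(\mu)$, invoke the characterization theorem to obtain convergence together with $\HMU=X_{\#}(\mu)$, $\HU=U_Q^{(K)}$ and $\Hm=m_Q^{(K)}$, and then compose with $X=\Psi^{(K)}\circ\hat{Y}^{(K)}$ and apply Theorem \ref{theorem_multiplication_operator}. Your extra almost-everywhere bookkeeping (that $\HMU$-null sets pull back to $\mu$-null sets under $X$, and that $\Psi^{(K)}$ and its compact image are Borel) usefully makes explicit details the paper leaves implicit.
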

\begin{proof}
	Let $(\Omega,\mathcal{A},\mu)$ be a probability space, for which we have the maps $\hat{Y}^{(K)}$ satisfying the suppositions, so that $X=\Psi^{(K)}\circ \hat{Y}^{(K)}$ is well defined.
	
	Then, we note that for any infinite $K\in{}^*\mathbb{N},$ we have that $$\mu_Q^{(K)}=\Psi^{(K)}_{\#}(\HMU^{(K)})=\Psi^{(K)}_{\#}(\hat{Y}^{(K)}_{\#}(\mu))=X_{\#}(\mu).$$ The rightmost term does not depend on $K$ by hypothesis, and so $\mu_Q^{(K)}$ is invariant on $K$. Therefore, by the last theorem, $(S_n)_{n\in\mathbb{N}}$ is convergent with $\HMU=X_{\#}(\mu),$ $\HU=U_Q^{(K)}$ and $\Hm=m_Q^{(K)}.$
	
	Furthermore, let $m=\Hm\circ X$ and $U(z)=\HU(z)\circ X$  for $z\in H$, noting that $U$ is an isometry since $X$ is measure-invariant. Using Theorem \ref{theorem_multiplication_operator}, we have that for any $x\in\Dom(A),$
	\begin{align*}
		U(Ax)=(\HU(Ax))\circ X=(\Hm\cdot \HU(x))\circ X=(\Hm\circ X)\cdot ((\HU(x))\circ X)=m\cdot U(x).
	\end{align*} 
	Finally, we have that $$m=\Hm\circ X=m_Q^{(K)}\circ \Psi^{(K)}\circ \hat{Y}^{(K)}=\Hm^{(K)}\circ\hat{Y}^{(K)}$$, and for any $z\in H$, $$U(z)=\HU(z)\circ X=U_Q^{(K)}(z)\circ\Psi^{(K)}\circ \hat{Y}^{(K)}=\HU^{(K)}(z)\circ \hat{Y}^{(K)},$$ concluding the proof.
\end{proof}
\begin{remark}
	That last corollary will prove to be surprisingly useful, as we will use it to quickly prove the main results of both example, allowing us to bypass much of the technical details. We do note that to make use of these results, one must do the work in the setting of \cite{nonez2024spectralequivalencesnonstandardsamplings} first.
\end{remark}
\section{Proof of Theorem \ref{theorem_shift_fourier_series} }\label{appendix_shift}

We now use the same terminology for $H$, $A$ and the $A$-sampling-scale sequence $(S_n)_{n\in\mathbb{N}}$ described in \ref{section_shift}, in order to prove Theorem \ref{theorem_shift_fourier_series}. As such, we assume $\mu$ is the Lebesgue measure on $\Borel([0,1]),$ and $X:[0,1]\rightarrow Q$ is such that $$X(t)=\left(\frac{1}{\sqrt{2^j}}e^{2\pi i l_j t}\right)_{n\in\mathbb{N}}.$$

In Section 6 of \cite{nonez2024spectralequivalencesnonstandardsamplings}, the same example is considered, precisely with $H=\ell_2(\mathbb{Z})$ on $\mathbb{K}=\mathbb{C}$ and $A=\frac{1}{2}(LS+RS)$. The chosen sampling and scale is given, for infinite $K\in{}^*\mathbb{N}$, by:
\begin{itemize}
	\item $\tilde{H}={}^*\Span(\{{}^*e_j \}_{j=1}^{2K+1})=H_K$;
	\item $\tilde{A}=\frac{1}{2}({}^*RS_K+{}^*LS_K)=A_K$;
	\item $\tilde{\Omega}=\{{}^*f_k^{(K)} \}_{k=1}^{2K+1}=\{{}^*f_k^{(K)} \}_{k=0}^{2K}=\Omega_K$;
	\item $\tilde{N}=2K+1=N_K$;
	\item $\tilde{e}_j= {}^*e_j= e_j^{(K)}$, for $j\leq N_K$;
	\item $\tilde{c}_j=\frac{1}{2^j(1-2^{-N_K})}=c_j^{(K)},$
\end{itemize}
and thus the sampling is $(H_K,A_K,\Omega_K)$ and the scale is $(e_j^{(K)},c_j^{(K)})_{j=1}^{N_K},$ for any arbitrary infinite $K$. We now state the key results of the calculations done in \cite{nonez2024spectralequivalencesnonstandardsamplings}.

\begin{prop}
	We have that for any $l\in\mathbb{Z}$ and $f_k^{(K)}\in \Omega_K,$ $$(\tilde{U}^{(K)}(g_l))(f_k^{(K)})=e^{2\pi i l\frac{k}{N_K}},$$ and so for any $l\in\mathbb{Z}$ we can formulate $$(\HU^{(K)}(g_l))(\HNU^{(K)}(f_k^{(K)}))=e^{2\pi i l \St(\frac{k}{N_K})}$$ and $$\Hm^{(K)}(\HNU^{(K)}(f_k^{(K)}))=\cos(2\pi\St(\frac{k}{N_K})).$$
	Furthermore, if $\mathbb{R}/\mathbb{Z}$ is endowed with the Lebesgue measure on its Borel sets, then the map $\rho^{(K)}:\HOM^{(K)}\rightarrow \mathbb{R}/\mathbb{Z}$ given by $\rho^{(K)}(\HNU^{(K)}(f_k^{(K)}))=\St(\frac{k}{N_K}) \operatorname{mod}\;1$ is a well-defined measure preserving homeomorphism.
\end{prop}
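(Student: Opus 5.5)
We prove the three formulas by direct computation with the internal objects, using the earlier results on $\tilde{U}^{(K)}$, $\HNU^{(K)}$ and $\Hm^{(K)}$. Then we build $\rho^{(K)}$ from the characterization of $\HNU^{(K)}$ in Proposition \ref{proposition_nonstandard_hull_compact}.

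\textbf{Step 1: the internal computation.} By transfer of the finite computation in Proposition \ref{proposition_shift_sampling_sequence}, $\LL g_l, f_k^{(K)}\RR = \frac{1}{\sqrt{N_K}}e^{2\pi i l\frac{k}{N_K}}$ for $|l|\leq K$. The $e_j^{(K)}$ run over the orthonormal basis $\{g_l\}_{|l|\leq K}$, and $\sum_j c_j^{(K)}=1$. Hence
\[
\tilde{\mu}^{(K)}(f_k^{(K)})=\sum_j c_j^{(K)}\frac{1}{N_K}=\frac{1}{N_K},
\]
which gives $(\tilde{U}^{(K)}(g_l))(f_k^{(K)})=e^{2\pi i l\frac{k}{N_K}}$ for every standard $l$.

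\textbf{Step 2: the formulas for $\HU^{(K)}$ and $\Hm^{(K)}$.} These values have modulus $1$, so $U_L^{(K)}(g_l)=\St\circ\tilde{U}^{(K)}(g_l)=e^{2\pi i l\St(k/N_K)}$. Theorem \ref{theorem_nonstandard_spectral_hull} then yields the formula for $\HU^{(K)}(g_l)$ at $\HNU^{(K)}(f_k^{(K)})$. The value is independent of the representative, because $\HNU^{(K)}(f_{k_1})=\HNU^{(K)}(f_{k_2})$ forces equal standard parts of $e^{2\pi i l k/N_K}$ for all $l$, i.e.\ equal $\St(k/N_K)$ mod $1$. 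For $\Hm^{(K)}$ we use $\lambda_K(f_k^{(K)})=\cos(2\pi k/N_K)$. Continuity of $\cos$ gives $m_L^{(K)}=\cos(2\pi\St(k/N_K))$, and this function factors through $\HNU^{(K)}$ by the same argument. Since $\Hm^{(K)}$ is only determined almost everywhere, we may take it to be this representative.

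\textbf{Step 3: $\rho^{(K)}$ is a bijection.} Well-definedness and injectivity both follow from the equivalence
\[
\HNU^{(K)}(f_{k_1})=\HNU^{(K)}(f_{k_2})\iff e^{2\pi i l\St(k_1/N_K)}=e^{2\pi i l\St(k_2/N_K)}\ \text{for all } l\in\mathbb{Z}.
\]
Taking $l=1$ shows the right side is equivalent to $\St(k_1/N_K)=\St(k_2/N_K)$ mod $1$. Surjectivity holds because every $t\in[0,1)$ equals $\St(\lfloor tN_K\rfloor/N_K)$.

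\textbf{Step 4: $\rho^{(K)}$ is a homeomorphism.} For continuity, note that
\[
d^{(K)}\geq c_2^{(K)\,3/2}\,\Bigl|e^{2\pi i k_1/N_K}-e^{2\pi i k_2/N_K}\Bigr|
\]
up to standard part, using the $j=2$ term of $\tilde{d}^{(K)}$ (where $l_2=1$). So $\rho^{(K)}$, viewed as a map into the circle, is Lipschitz. A continuous bijection from the compact space $\HOM^{(K)}$ to the Hausdorff space $\mathbb{R}/\mathbb{Z}$ is a homeomorphism.

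\textbf{Step 5: measure preservation (the main obstacle).} It suffices to check intervals $[a,b)$ with standard endpoints. The preimage of $[a,b)$ under $\rho^{(K)}\circ\HNU^{(K)}$ lies between the internal sets $\{k: a+\epsilon\leq k/N_K<b-\epsilon\}$ and $\{k: a-\epsilon\leq k/N_K<b+\epsilon\}$ for every standard $\epsilon>0$. Both sets have internal measure within $2\epsilon$ of $b-a$ under the uniform measure $1/N_K$. Loeb measurability and the definition of $\mu_L^{(K)}$ then give measure exactly $b-a$. The extension from intervals to all Borel sets follows by the $\pi$-$\lambda$ theorem.
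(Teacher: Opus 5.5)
Your proof is correct, and it fills in something the paper leaves out. The paper gives no argument for this proposition: it presents it as a summary of the computations in Section 6 of \cite{nonez2024spectralequivalencesnonstandardsamplings} and uses it as a black box in the proof of Theorem \ref{theorem_shift_fourier_series}.

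You instead derive it inside the present framework. You use only the transferred finite computation $\tilde{\mu}^{(K)}(f_k^{(K)})=1/N_K$ and the general results already summarized in Appendix \ref{appendix_nonstandad}: the characterization of $\HNU^{(K)}$ and compactness from Proposition \ref{proposition_nonstandard_hull_compact}, and Theorem \ref{theorem_nonstandard_spectral_hull}. Your key devices are the right ones:
\begin{itemize}
\item $\{l_j\}_{j\in\mathbb{N}}=\mathbb{Z}$ turns that characterization into equality of $\St(k/N_K)$ mod $1$;
\item $l_2=1$ lets the $j=2$ term of $\tilde{d}^{(K)}$ dominate the chordal distance on the circle, which gives continuity;
\item the preimage of an interval is sandwiched between internal sets of nearly equal counting measure, which gives measure preservation.
\end{itemize}
Your route makes the example depend on the earlier paper only through its general theorems, not through its example-specific calculations. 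The paper's citation buys brevity and consistency with the earlier notation.

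Two small repairs are needed.
\begin{itemize}
\item In Step 5, $\St(k/N_K)$ can equal $1\equiv 0$ mod $1$. So the preimage of $[0,b)$ is not contained in $\{k:-\epsilon\le k/N_K<b+\epsilon\}$. Either add $\{k: k/N_K\ge 1-\epsilon\}$ to the outer set, or test only intervals with $0<a<b<1$. The latter still form a $\pi$-system generating $\Borel(\mathbb{R}/\mathbb{Z})$, which suffices for two probability measures.
\item In Step 2, say explicitly that your chosen representative $e^{2\pi i l\rho^{(K)}}$ on $\HOM^{(K)}$ is Borel; it is in fact continuous by Step 4. This is what lets $\hat{\mathcal{I}}^{(K)}$ identify it with $\HU^{(K)}(g_l)$. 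The same remark covers the representative $\cos(2\pi\rho^{(K)})$ of $\Hm^{(K)}$.
\end{itemize}
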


We can now begin the main proof of this section.

\begin{proof}[Proof of Theorem \ref{theorem_shift_fourier_series}]
	We note that $t\rightarrow t\operatorname{mod}\;1: [0,1]\rightarrow \mathbb{R}/\mathbb{Z}$ is also measure preserving. Thus, we define, for any given infinite $K\in{}^*\mathbb{N}$, $\hat{Y}^{(K)}:[0,1]\rightarrow \HOM^{(K)}$ with $\hat{Y}^{(K)}(t)=(\rho^{(K)})^{(-1)}(t\operatorname{mod}\;1),$  and so $\hat{Y}^{(K)}$ is measure preserving and we have $\HMU^{(K)}=\hat{Y}^{(K)}_{\#}(\mu)$.
	
	We now show that $\Psi^{(K)}\circ \hat{Y}^{(K)}=X$ in order to apply Corollary \ref{corollary_nonstandard_invariance}. Let $t\in[0,1]$ be arbitrary.  Furthermore, let $0\leq k\leq 2K$ being any integer such that $\St(\frac{k}{2K+1})=t$. We note that $\rho^{(K)}(\HNU^{(K)}(f_k^{(K)}))=t\operatorname{mod}\;1.$ Therefore,
	\begin{align*}
		\Psi^{(K)}(\hat{Y}^{(K)}(t))&=\Psi^{(K)}(\HNU^{(K)}(f_k^{(K)}))=\St_Q(X_K(f_k^{(K)})),
	\end{align*}
	and so for any $j\in\mathbb{N}$, and since $\pi_j$ is continuous on $Q$ and $j\leq N_K,$
	\begin{align*}
		\pi_j(\Psi^{(K)}(\hat{Y}^{(K)}(t)))&=\St({}^*\pi_j(X_K(f_k^{(K)})))\\
		&=\St\left(\sqrt{c_j^{(K)}}\left({}^*U_K(e_j^{(K)})\right)(f_k^{(K)})\right)\\	&=\St\left(\sqrt{c_j^{(K)}}\left(\tilde{U}^{(K)}(g_{l_j})\right)(f_k^{(K)})\right)\\
		&=\St\left(\frac{1}{\sqrt{2^j(1-2^{-N_K})}}e^{2\pi il_j \frac{k}{N_K}}\right)=\frac{1}{\sqrt{2^j}}e^{2\pi i l_j t}=\pi_j(X(t)).
	\end{align*}
	Since $j\in\mathbb{N}$ is arbitrary, we have that $\Psi^{(K)}(\hat{Y}^{(K)}(t))=X(t)$, and since $t\in[0,1]$ is arbitrary, we have $\Psi^{(K)}\circ \hat{Y}^{(K)}=X$. In particular, $\Psi^{(K)}\circ \hat{Y}^{(K)}$ is invariant on $K$, and Corollary \ref{corollary_nonstandard_invariance} applies.
	
	In other words, $(S_n)_{n\in\mathbb{N}}$ is convergent, and $\HMU=X_{\#}(\mu)$. Furthermore, for any $z\in H$, $U(Az)=m\cdot U(z)$ up to null Lebesgue measure, where $m=\Hm\circ X$  and $U:H\rightarrow L_2([0,1],\mu)$ is the isometry given by $U(z)=\HU(z)\circ X$ for $z\in H$. 
	
	To conclude the proof, we only need to show that for almost all $t\in[0,1],$ $m(t)=\cos(2\pi t)$ and $(U(g_l))(t)=e^{2\pi il t}$  any $l\in\mathbb{Z}$.
	
	Let $K$ be some infinite hypernatural. Thanks to Corollary \ref{corollary_nonstandard_invariance}, we know that for any $l\in\mathbb{N},$ $(U(g_l)(t))=(\HU^{(K)}(g_l)\circ \hat{Y}^{(K)})(t)$ for almost all $t\in[0,1]$. For any such $t$ where this holds, and fixing hyperinteger $k$ such that $0\leq k\leq 2K$ and $t=\St(\frac{k}{N_K}),$ we calculate:
	
	\begin{align*}
		(U(g_l)(t))&=(\HU^{(K)}(g_l)\circ \hat{Y}^{(K)})(t)=(\HU^{(K)}(g_l))((\rho^{(K)})^{(-1)}(t\operatorname{mod} 1))\\
		&=(\HU^{(K)}(g_l))(\HNU^{(K)}(f_k^{(K)}))=e^{2\pi i l \St(\frac{k}{N_K})}=e^{2\pi i l t}.
	\end{align*}
	
	The same way, we have that $m(t)=(\Hm^{(K)}\circ \hat{Y}^{(K)})(t)$ holds for almost all $t\in[0,1]$ as a consequence of Corollary \ref{corollary_nonstandard_invariance}. Thus, for any such $t$ where it holds and suitable hyperinteger $k$ for which $0\leq k\leq 2K$ and $t=\St(\frac{k}{N_K}),$ we calculate:
	\begin{align*}
		m(t)&=(\Hm^{(K)}\circ \hat{Y}^{(K)})(t)=\Hm^{(K)}((\rho^{(K)})^{(-1)}(t\operatorname{mod} 1))\\
		&=\Hm^{(K)}(\HNU^{(K)}(f_k^{(K)}))=\cos(2\pi\St(\frac{k}{N_K}))=\cos(2\pi t),
	\end{align*}
	concluding the proof.
\end{proof}

\section{Proof of Theorem \ref{theorem_differential_fourier_transform}}\label{appendix_differential}

We now use the same terminology as Section \ref{section_differential}, with the scope of this section being the proof of Theorem \ref{theorem_differential_fourier_transform}. Specifically, we consider $\mathbb{K}=\mathbb{C},$ $H=L_2(\mathbb{R},\mu)$, $A=-i\frac{d}{dx}$ on $\Dom(A)=C_c^{{\infty}}(\mathbb{R})$ and the $A$-sampling-scale sequence $(S_n)_{n\in\mathbb{N}}$ described in Subsections  \ref{subsection_differential_sampling} and \ref{subsection_differential_scale}. We also consider $\mu'$ as the measure on $\Borel(\mathbb{R})$ given by $d\mu'=\varphi d\mu=\left(\frac{\pi}{2}\right)^{\frac{1}{2}}e^{-\frac{\pi^2\omega^2}{2}}d\mu,$ and the map $X:\mathbb{R}\rightarrow Q$ given by $$X(\omega)=\left(\frac{1}{\sqrt{2}^j}e^{-\pi i q_j \omega}\right)_{j\in\mathbb{N}},$$ $(q_j)_{j\in\mathbb{N}}$ being a count for $\mathbb{Q}$ with $q_1=0.$

The same example $H$ and $A$ are considered in Section 7 of \cite{nonez2024spectralequivalencesnonstandardsamplings}. The sampling is then given, for any infinite $K\in{}^*\mathbb{N}:$
\begin{itemize}
	\item $\tilde{H}={}^*\Span(\{\mathbf{1}_{s_l^{(K)}}\}_{l=-K!^2}^{K!^2-1})=H_K$;
	\item $\tilde{A}=-i\frac{\tilde{L}-\tilde{R}}{2/K!}=-i\frac{{}^*LS_K-{}^*RS_K}{2/K!}=A_K$;
	\item $\tilde{\Omega}=\{f_k^{(K)}\}_{k=-K!^2}^{K!^2-1}=\Omega_K$;
\end{itemize}

For the scale, there are a few preliminary definitions first, which mostly coincide with the ones of Subsection \ref{subsection_differential_scale}. First, $\tilde{N}_1$ is established as any infinite hypernatural for which $\sqrt{K!}<\frac{\tilde{N}_1}{K!}+q<K!$ for any standard $q\in\mathbb{Q}.$ We have that $L_K$ described in Subsection \ref{subsection_differential_scale} satisfy this criterion, so we use $\tilde{N}_1=L_K$. From there, $\tilde{e}$ is defined as $$\tilde{e}=\left(\frac{2}{\pi}\right)^{\frac{1}{4}}\sum_{k=-\tilde{N}_1}^{\tilde{N}_1}e^{(-(\frac{k}{K!})^2)}\mathbf{1}_{s_k}+\frac{i}{K!}\mathbf{1}_{s_0},$$ and so we find $\tilde{e}=E^{(K)}.$ Furthermore, $\{k_j\}_{j=1}^{K!}$ is defined as any internal sequence in ${}^*\mathbb{Z}$ such that for any $j\in\mathbb{N},$ $k_j=K! q_j.$ We use $k_j=\lfloor K! q_j \rfloor$ for any $j\leq K!.$ With that, we have:

\begin{itemize}
	\item $\tilde{N}=K!=N_K$;
	\item $\tilde{e}_j=\tilde{R}^{k_j}\tilde{e}=RS_K^{\lfloor K! q_j \rfloor}E^{(K)}=e_j^{(K)}$;
	\item $\tilde{c}_j=\frac{1}{2^j\|\tilde{e}\|^2(1-2^{-K!})}=c_j^{(K)}.$
\end{itemize}

Thus, for any arbitrary infinite $K$, the sampling is $(H_K, A_K. \Omega_K)$ and the scale is $(e_j^{(K)},c_j^{(K)})_{j=1}^{N_K}$. We now summarize key results of the calculations done in \cite{nonez2024spectralequivalencesnonstandardsamplings}, in terms of the notation presented here.

\begin{prop}\label{proposition_nsa_differential}
	Let $\tilde{\Omega}_{\mathbb{R}}^{(K)}=\{f_k^{(K)}\in\Omega_K\;|\; \St(\frac{k}{N_K})\in\mathbb{R}\},$ and let $\hat{\Omega}_{\mathbb{R}}^{(K)}=\HNU^{(K)}(\tilde{\Omega}_{\mathbb{R}}^{(K)}).$ We have that $\mu_L^{(K)}(\HOM^{(K)}\setminus\hat{\Omega}_{\mathbb{R}}^{(K)})=0.$ Furthermore, we have that for any $j\in\mathbb{N}$ and $f_k^{(K)}\in\Omega_K,$ $$(\tilde{U}^{(K)}(e_j^{(K)}))(f_k^{(K)})=e^{-\pi i q_j\frac{k}{N_K}}.$$
	
	Then, the map $\rho^{(K)}:\mathbb{R}\rightarrow \hat{\Omega}_{\mathbb{R}}^{(K)}$ given by $\rho^{(K)}(\St(\frac{k}{N_K}))=\HNU^{(K)}(f_k^{(K)})$ is well-defined on $\mathbb{R}$, as well as continuous and bijective. Furthermore, we have that $(\rho^{(K)})^{-1}$ is also measurable, and $\left((\rho^{(K)})^{-1}\right)_{\#}(\HMU^{(K)})=\mu'.$ We also have that $\HU^{(K)}$ is unitary.
	
	Finally, we can formulate $$(\HU^{(K)}(\psi))(\rho^{(K)}(\omega))=\frac{1}{\sqrt{2\varphi(\omega)}}\int_{\mathbb{R}}e^{-\pi i t\omega }\psi(t)d\mu(t)$$ for any $\psi\in L_1(\mathbb{R},\mu)\cap L_2(\mathbb{R},\mu) $ and $\mu'$-almost all $\omega\in \mathbb{R}$, as well as	$$\Hm^{(K)}(\rho^{(K)}(\omega))=\pi\omega$$ for $\mu'$-almost all $\omega\in\mathbb{R} .$
\end{prop}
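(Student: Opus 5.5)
The plan is to work entirely inside the nonstandard framework of Definition \ref{definition_nonstandard_internal_measure}, computing the internal objects explicitly for the sampling $(H_K,A_K,\Omega_K)$ and scale $(e_j^{(K)},c_j^{(K)})_{j=1}^{N_K}$, and then taking standard parts.

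\textbf{Step 1: inner products and $\tilde{\mu}^{(K)}$.} Since $RS_K$ is unitary with $RS_K f_k^{(K)}=e^{-2\pi i\frac{k}{2K!^2}}f_k^{(K)}$ (the adjoint of the formula for $LS_K$), I get
$$\LL e_j^{(K)},f_k^{(K)}\RR=e^{-\pi i \frac{k_jk}{K!^2}}\LL E^{(K)},f_k^{(K)}\RR .$$
The modulus of this is independent of $j$, so
$$\tilde{\mu}^{(K)}(f_k^{(K)})=\frac{|\LL E^{(K)},f_k^{(K)}\RR|^2}{\|E^{(K)}\|^2},\qquad (\tilde{U}^{(K)}(e_j^{(K)}))(f_k^{(K)})=e^{-\pi i\frac{k_j}{K!}\frac{k}{K!}}\,\|E^{(K)}\|\,\frac{\LL E^{(K)},f_k^{(K)}\RR}{|\LL E^{(K)},f_k^{(K)}\RR|}.$$

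\textbf{Step 2: Riemann-sum asymptotics.} Write $\omega=\St(k/K!)$ when this is finite. The sum
$$\sqrt{K!}\,\LL E^{(K)},f_k^{(K)}\RR=\frac{1}{\sqrt2}\sum_{|l|\le L_K}\frac{1}{K!}e^{-\pi i\frac{l}{K!}\frac{k}{K!}}E\bigl(\tfrac{l}{K!}\bigr)+(\text{infinitesimal})$$
is a Riemann sum. Because of the Gaussian decay of $E$ and the choice $L_K/K!>\sqrt{K!}$, it is infinitely close to $\frac{1}{\sqrt2}\int e^{-\pi i t\omega}E(t)\,dt=\sqrt{\varphi(\omega)}>0$. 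Two consequences follow:
\begin{itemize}
\item the phase factor is $\simeq 1$, which gives the stated formula $(\tilde{U}^{(K)}(e_j^{(K)}))(f_k^{(K)})\simeq e^{-\pi i q_j k/N_K}$, since $\|E^{(K)}\|\simeq 1$ and $k_j/K!=q_j$;
\item for standard $r$, $\tilde{\mu}^{(K)}(V_r)\simeq\int_{-r}^r\varphi\,d\mu$, using $S$-integrability of a bounded internal Riemann sum.
\end{itemize}
Letting $r\to\infty$ in the second point, and using $\int\varphi\,d\mu=1$, shows that the Loeb measure of $\{f_k^{(K)}: k/K!\text{ infinite}\}$ is $0$.

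\textbf{Step 3: the map $\rho^{(K)}$.}
\begin{itemize}
\item Well-definedness and injectivity come from Proposition \ref{proposition_nonstandard_hull_compact}. Two classes coincide iff $e^{-\pi i q_j\omega}=e^{-\pi i q_j\omega'}$ for all $j$, and this forces $\omega=\omega'$ because $q_j$ ranges over $\mathbb{Q}$ (take $q_j$ small).
\item Surjectivity onto $\hat{\Omega}_{\mathbb{R}}^{(K)}$ holds by definition.
\item Continuity holds because each coordinate $\omega\mapsto e^{-\pi i q_j\omega}$ is continuous, and the hull metric is a weighted sum of such coordinates with summable weights.
\item Measurability of the inverse follows because $\rho^{(K)}$ is a continuous injection from a $\sigma$-compact space, so images of compacts are compact (Lusin--Souslin if needed).
\item The pushforward identity follows from Step 2 applied to intervals $[a,b)$ in place of $[-r,r)$.
\end{itemize}

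\textbf{Step 4: unitarity and the two formulas.} Under $\rho^{(K)}$, $\HU^{(K)}(e_j)$ becomes $e^{-\pi i q_j\cdot}$. By Remark \ref{remark_dense_gaussian}, $\{e^{-\pi i q_j\cdot}\sqrt{\varphi}\}_j$ is dense-spanning in $L_2(\mathbb{R},\mu)$, so these functions are dense-spanning in $L_2(\mu')$, and hence $\HU^{(K)}$ is unitary.

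For the Fourier formula, I first treat $\psi\in C_c(\mathbb{R})$. There $\sqrt{K!}\LL\psi,f_k^{(K)}\RR$ is again a Riemann sum $\simeq\frac{1}{\sqrt2}\int e^{-\pi it\omega}\psi(t)\,dt$. Dividing by $\sqrt{K!\,\tilde{\mu}^{(K)}(f_k)}\simeq\sqrt{\varphi(\omega)}$ gives the formula pointwise, hence $\mu'$-almost everywhere via Theorem \ref{theorem_nonstandard_spectral_Loeb}. For general $\psi\in L_1\cap L_2$, I approximate by $\psi_m\in C_c$ in both norms simultaneously. The left sides converge in $L_2(\mu')$ because $\HU^{(K)}$ is an isometry, and the right sides converge uniformly in $\omega$ (after multiplying by $\sqrt{2\varphi}$) by the $L_1$ bound, so the two limits agree almost everywhere.

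Finally, $\lambda_K(f_k^{(K)})=K!\sin(\pi k/K!^2)\simeq\pi k/K!$ whenever $k/K!$ is finite, so $m_L^{(K)}=\pi\,\St(k/K!)$ almost everywhere. Theorem \ref{theorem_nonstandard_spectral_hull} then gives $\Hm^{(K)}\circ\rho^{(K)}=\pi\,\Id$ $\mu'$-almost everywhere.

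The main obstacle is Step 2: making the Riemann-sum approximations rigorous uniformly enough to pass to Loeb integrals, with tail control from the Gaussian for the sum over $l$ and tightness for the sum over $k$. The extension of the Fourier formula to $L_1\cap L_2$ in Step 4 is the second delicate point.
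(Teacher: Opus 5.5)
Your proposal is correct and follows essentially the same route as the paper. The paper does not prove this proposition itself but imports it from Section 7 of the cited nonstandard paper, and its Key Steps for Theorem \ref{theorem_differential_fourier_transform} carry out the same computations in standard form: the formula for $\mu_n(f_k^{(n)})$, the Riemann-sum limit $\sqrt{n!}\LL E^{(n)},f_k^{(n)}\RR\to\sqrt{\varphi(\omega)}$, tightness via $V_r^{(n)}$, density from Remark \ref{remark_dense_gaussian}, and $n!\sin(\pi k/n!^2)\to\pi\omega$. The only cosmetic difference is that you reach $L_1\cap L_2$ by $C_c$ approximation rather than through $\Proj_{H_n}\psi$.

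You also rightly prove $(\tilde{U}^{(K)}(e_j^{(K)}))(f_k^{(K)})\simeq e^{-\pi i q_j k/N_K}$ only for standard $j$ and $f_k^{(K)}\in\tilde{\Omega}_{\mathbb{R}}^{(K)}$, which is the true content. Your Step 1 formula shows that the literal equality for every $f_k^{(K)}\in\Omega_K$ fails. The extra factor $\|E^{(K)}\|\,\LL E^{(K)},f_k^{(K)}\RR/|\LL E^{(K)},f_k^{(K)}\RR|$ is never exactly $1$, because $\LL E^{(K)},f_k^{(K)}\RR$ has nonzero imaginary part.
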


We can now complete the proof of this section.

\begin{proof}[Proof of  Theorem \ref{theorem_differential_fourier_transform}]
	We want to apply  Corollary \ref{corollary_nonstandard_invariance} with the measure space $(\mathbb{R},\Borel(\mathbb{R},\mu')$ and $\hat{Y}^{(K)}:\mathbb{R}\rightarrow\Omega_K$ given by $\hat{Y}^{(K)}(t)=\rho^{(K)}(t)$. Since $\rho^{(K)}$ is a measureable bijection with measure preserving inverse and full-measure image, we have that $\rho^{(K)}$ is also measure-preserving, so that $\hat{Y}^{(K)}_{\#}(\mu')=\HMU^{(K)}.$ To apply the corollary, it is sufficient to show that $\Psi^{(K)}\circ \hat{Y}^{(K)}=X.$
	
	But then, given $\omega\in\mathbb{R},$ for any $f_k^{(K)}\in\tilde{\Omega}_{\mathbb{R}}^{(K)}$ such that $\St(\frac{k}{N_K})=\omega$, and $j\in\mathbb{N}$, we have
	\begin{align*}
		\pi_j(\Psi^{(K)}(\hat{Y}^{(K)}(\omega)))&=\pi_j(\Psi^{(K)}(\HNU^{(K)}(f_k^{(K)})))\\
		&=\pi_j(\St_Q(X_K(f_k^{(K)})))\\
		&=\St({}^*\pi_j(X_K(f_k^{(K)})))\\
		&=\St(\sqrt{c_j^{(K)}}\left({}^*U_K(e_j^{(K)})\right)(f_k^{(K)}))\\
		&=\St\left(\sqrt{\frac{1}{2^j\|E^{(K)}\|^2(1-2^{-K!})}}\left(\tilde{U}^{(K)}(e_j^{(K)})\right)(f_k^{(K)})\right)\\
		&=\frac{1}{\sqrt{2^j}}\St(e^{-\pi i q_j\frac{k}{N_K}})\\
		&=\frac{1}{\sqrt{2^j}}e^{-\pi i q_j\omega}\\
		&=\pi_j(X(\omega)).
	\end{align*}
	Since both $j\in\mathbb{N}$ and $\omega\in\mathbb{R}$ are arbitrary, we conclude that $\Psi^{(K)}\circ \hat{Y}^{(K)}=X,$ and so Corollary \ref{corollary_nonstandard_invariance} applies.
	
	Thus, $(S_n)_{n\in\mathbb{N}}$ is convergent, with $\HMU=X_{\#}(\mu')$. Furthermore, for any $z\in H$, $U(Az)=m\cdot U(z)$ in $L_2(\mathbb{R},\mu')$, where $m=\Hm\circ X$  and $U:H\rightarrow L_2(\mathbb{R},\mu')$ is the isometry given by $U(z)=\HU(z)\circ X$ for $z\in H$. 
	
	We note that since $\HU^{(K)}$ is unitary, so is $U$. Indeed, with Corollary \ref{corollary_nonstandard_invariance}, for any $g\in L_2(\mathbb{R},\mu')$, we have $g=U((\HU^{(K)})^{-1}(g\circ (\rho^{(K)})^{-1}))$, making $U$ surjective. 
	
	With Proposition \ref{proposition_nsa_differential}, we  have, for almost all $\omega\in\mathbb{R}$, that $$m(\omega)=\Hm^{(K)}(\hat{Y}^{(K)}(\omega))=\pi\omega.$$ Finally, we also have, for any $\psi\in L_1(\mathbb{R},\mu)\cap L_2(\mathbb{R},\mu)$ and almost all $\omega\in\mathbb{R}$, that $$(U(\psi))(\omega)=(\HU^{(K)}(\psi))(\hat{Y}^{(K)}(\omega))=\frac{1}{\sqrt{2\varphi(\omega)}}\int_{\mathbb{R}}e^{-\pi i t\omega }\psi(t)d\mu(t),$$ concluding the proof.
\end{proof}
\section*{Acknowledgments}
I extend my sincere gratitude to Isaac Goldbring for the helpful discussions and advice for this paper.
\printbibliography
\end{document}